\pdfoutput=1
\RequirePackage{ifpdf}
\ifpdf % We are running pdfTeX in pdf mode
\documentclass[pdftex]{sigma}
\else
\documentclass{sigma}
\fi

\usepackage[all]{xy}
\usepackage[mathscr]{euscript}

\def\b{\bullet}

\newcommand{\bs}[1]{\boldsymbol{#1}}
\newcommand{\dg}[1]{{}^{\ast\text{-gr}}#1}

\newcommand{\ol}[1]{\overline{#1}}
\def\Id{\operatorname{I}}

\DeclareMathOperator{\Ker}{Ker}
\newcommand{\h}{\mathrm{H}}

\DeclareMathOperator{\coker}{Coker}

\DeclareMathOperator{\im}{Im}
\DeclareMathOperator{\K}{K}
\DeclareMathOperator{\Tor}{Tor}
\DeclareMathOperator{\I}{Id}

\newcommand\ot{\otimes}
\newcommand\rar{\rightarrow}
\newcommand\op{\oplus}
\newcommand\kal{\mathscr}
\newcommand\bb{\mathbb}

\newcommand{\Ap}[1]{A_+^{(#1)}}
\newcommand{\Cp}[1]{C_+^{(#1)}}
\newcommand{\gen}[1]{\langle #1 \rangle}
\newcommand{\br}[1]{\{ #1 \}}

\newcommand{\Coker}{\operatorname{Coker}}

\newcommand{\kap}{\Bbbk^a[\kal{P}]}

\def\m{\mu}
\newcommand{\Od}[2]{\Omega_{#1}(#2)}
\newcommand{\Ou}[2]{\Omega^{#1}\big(#2\big)}
\newcommand{\Pa}[2]{\mathscr{P}_{#2}(#1)}

\newcommand\xrar{\xrightarrow}

\newcommand{\N}{{\mathbb N}}

\newcommand{\Ext}{\operatorname{Ext}}
\newcommand\Hom{\operatorname{Hom}}

\newcommand{\pa}{\partial}
\newcommand{\al}{\alpha}
\newcommand{\be}{\beta}

\newcommand{\de}{\delta}

%\newtheoremstyle{mystyle}{2mm}{0mm}{}{}{\bfseries}{}{1.1ex}{\thmnumber{#2}.\hspace*{1ex}\thmnote{#3}}
%\theoremstyle{mystyle}
%\newtheorem{fact}[theorem]{}

\numberwithin{equation}{section}

\newtheorem{Theorem}{Theorem}[section]
\newtheorem{Corollary}[Theorem]{Corollary}
\newtheorem{Lemma}[Theorem]{Lemma}
\newtheorem{Proposition}[Theorem]{Proposition}
{ \theoremstyle{definition}
\newtheorem{Definition}[Theorem]{Definition}
\newtheorem{Remark}[Theorem]{Remark}
}

\begin{document}

%\allowdisplaybreaks

\newcommand{\arXivNumber}{1504.03548}

\renewcommand{\PaperNumber}{092}

\FirstPageHeading

\ShortArticleName{Further Properties and Applications of Koszul Pairs}

\ArticleName{Further Properties and Applications of Koszul Pairs}

\Author{Adrian MANEA and Drago\c{s} \c{S}TEFAN}

\AuthorNameForHeading{A.~Manea and D.~\c{S}tefan}

\Address{Faculty of Mathematics and Computer Science, University of Bucharest,\\
 14 Academiei Str., Bucharest Ro-010014, Romania}
\Email{\href{mailto:adrian.manea@fmi.unibuc.ro}{adrian.manea@fmi.unibuc.ro}, \href{mailto:dragos.stefan@fmi.unibuc.ro}{dragos.stefan@fmi.unibuc.ro}}

\ArticleDates{Received May 19, 2016, in f\/inal form September 08, 2016; Published online September 14, 2016}

\Abstract{Koszul \looseness=-1 pairs were introduced in [arXiv:1011.4243] as an instrument for the study of Koszul rings. In this paper, we continue the enquiry of such pairs, focusing on the description of the second component, as a follow-up of the study in [arXiv:1605.05458]. As such, we introduce Koszul corings and prove several equivalent characterizations for them. As applications, in the case of locally f\/inite $R$-rings, we show that a graded $R$-ring is Koszul if and only if its left (or right) graded dual coring is Koszul. Finally, for f\/inite graded posets, we obtain that the respective incidence ring is Koszul if and only if the incidence coring is so.}

\Keywords{Koszul rings; Koszul corings; Koszul pairs; incidence (co)ring of a poset}

\Classification{16E40; 16T10; 16T15}

\section{Introduction}
The classical approach on Koszul rings is due to~\cite{bgs}, which def\/ines an $\bb{N}$-graded ring $A = \op_{n \in \bb{N}} A^n$ to be Koszul if and only if $A^0$ is a semisimple ring and there exists a resolution~$P_\bullet$ of $A^0$ by projective graded left $A$-modules such that each term $P_n$ is generated by its homogeneous component of degree~$n$. Koszul rings evolved as natural generalisations of Koszul algebras, which in turn were discovered by Priddy in~\cite{Pr}. Since their early developments, Koszul algebras and rings proved to be very useful tools in various f\/ields of mathematics, as are representation theory, algebraic geometry, algebraic topology, quantum groups, combinatorics and many more. A comprehensive read is, for example, \cite{PP} and the references therein.

While studying certain cohomological properties of Koszul rings in~\cite{jps}, the authors were led to a new notion, the so-called \emph{Koszul pairs}, which we explain in brief. Let $R$ be a semisimple ring. A graded $R$-ring is a graded algebra in the tensor category of $R$-bimodules with respect to the tensor product (of bimodules). A graded $R$-ring $A = \op_{n \in \bb{N}} A^n$ is called \emph{connected} if $A^0 = R$. Connected graded $R$-corings are def\/ined by duality. By def\/inition, an \emph{almost Koszul pair} consists of a graded connected $R$-ring $A$ and a graded connected $R$-coring $C$, together with an $R$-bimodule isomorphism $\theta_{A,C}\colon C_1 \to A^1$. These data must be compatible, in the sense that the composition of the three maps below must be zero:
\begin{gather}\label{ec:theta}
C_2 \xrar{\Delta} C_1 \ot C_1 \xrar{\theta_{C,A} \ot \theta_{C,A}} A^1 \ot A^1 \xrar{\mu^{1,1}} A^2,	
\end{gather}
where $\Delta_{1,1}$ and $\mu^{1,1}$ denote the components of (co)multiplication maps for $C$ and $A$, respectively.

By \cite{jps}, to an almost-Koszul pair correspond three chain complexes and three cochain complexes, which measure how far is $A$ from being a Koszul ring. More precisely, an important feature of an almost Koszul pair is that any of the corresponding six complexes is exact if and only if all of them are so. In this case, the pair $(A,C)$ is called \emph{Koszul}. Furthermore, one proves that a connected graded $R$-ring $A$ is Koszul if and only if there exists a connected graded $R$-coring $C$ such that $(A,C)$ is a Koszul pair.

In this paper, we continue the work on Koszul pairs and their applications, having in view two aims. Firstly, following our results in \cite{mst} which include several characterizations Koszul rings, we prove that after properly def\/ining the dual notion, that of \emph{Koszul corings}, an appropriate characterization theorem holds true. Furthermore, we obtain a new proof of the fact that a~Koszul $R$-ring is quadratic and also a corresponding result for corings. In this sense, we refer to Theorem~\ref{thm:kcoring} for the general properties of Koszul corings and to Corollary~\ref{co:cquad} for quadraticity.

As applications, we remark that in the locally f\/inite case, as one could expect, a ring is Koszul if and only if its graded dual coring is so, cf.\ Theorem~\ref{te:kdual}. In the proof of this result, we use again some Koszul pairs involving the graded duals as the starting structures. In particular, the incidence ring of a graded poset is Koszul if and only if its incidence coring is Koszul as well, as per Theorem~\ref{te:incidence-path}. In this class of f\/inite graded posets, we have obtained in~\cite{mst} some particular examples of Koszul posets and in this article we show that they also provide examples of Koszul corings. Taking this into account, this paper can be seen as an announced follow-up of~\cite{mst}.

More examples of Koszul corings, related to Hopf algebras, will be considered in a forthcoming paper.

Since the literature is abundant on this subject, some remarks regarding the scope and position of the present paper are due. First of\/f, a vast majority of the theory is known and developed for the algebra or $R$-rings case. In this respect, we introduce and study the dual case, that of corings. By these means, we obtain new insights on the theory. Moreover, although recent developments of the theory have dealt with a categorical setting (e.g., \cite{ke} or~\cite{mos}), the present study is, in majority, self-contained, by using the tool of Koszul pairs that the second author most introduced in~\cite{jps}.

We acknowledge as well that further generalizations on the theory of Koszul rings were treated, for example, in~\cite{grs,md1,md2}, eliminating the semisimplicity condition on the part of degree zero. Many other directions which relax the starting basic assumptions were considered in the literature (as it is the case, for example, in~\cite{ber} and some other articles of the same author). However, as remarked in the previous paragraph, the present article focuses on the dualization of the more ``classical'' case by means of Koszul pairs. This way, the overlap or even connection with the references above is at its minimum and the subjects treated therein exceed the purposes of the present paper.

The article is organized as follows. In Section~\ref{section2}, we recall the preliminary notions and results regarding Koszul pairs that are needed in the paper. Section~\ref{section3} brief\/ly revisits the case of Koszul $R$-rings, whereas Section~\ref{section4} introduces and discusses Koszul $R$-corings. Sections~\ref{section5} and~\ref{section6} study applications and examples.

\section{Preliminaries}\label{section2}
In this section we recall some basic concepts and notations from \cite{jps} and then we shall prove some new preliminary results, which are needed later on.
	
\subsection{Connected (co)rings} \label{corings} Let $R$ be a semisimple ring that we f\/ix throughout the article. Since we will always work with algebras and coalgebras in the tensor category of $R$-bimodules, an unadorned tensor product $\ot$ will mean $\ot_R$. Let $V$ be an $R$-bimodule. The notation $V^{(n)}$ will be used for the $n$-th tensor power $V \ot \cdots \ot V$. Conventionally, $V^{(0)} = R$. Similarly, for any bimodule morphism $f\colon M \to N$, the tensor product $f \ot \cdots \ot f$ with $n$ factors will be denoted by $f^{(n)}$. For a set $X$, the identity morphism will be denoted by $\operatorname{Id}_X$ or simpler, $\operatorname{I}_X$ or even $X$ when there is no risk of confusion.

An $\bb{N}$-graded algebra $A = \op_{n \in \bb{N}} A^n$ is called an \emph{$R$-ring} if it is an algebra in the tensor category of $R$-bimodules and it is connected if $A^0=R$. Dually, a connected $R$-coring is a graded coalgebra $C = \op_{n \in \bb{N}}$ in the category of $R$-bimodules such that $C_0=R$. The augmentation ideal of $A$ will be denoted by $A_+ = \op_{n \in \bb{N}^\ast} A^n$ and similarly for~$C$. Note that the comultiplication map $\Delta$ induces a canonical coassociative map $\Delta_+ \colon C_+ \to C_+ \ot C_+$, where $C_+$ can also be viewed as~$C/R$.

For a graded connected $R$-ring $A$, the multiplication $\mu$ is def\/ined by some maps $\mu^{p,q} \colon$ \mbox{$A^p \ot A^q \to A^{p+q}$}. When all of these maps are surjective, for all $p,q \geq 0$, we say that $A$ is \emph{strongly graded}. Equivalently, $A$ is strongly graded if and only if the iterated multiplication $\mu_n \colon (A^1)^{(n)} \to A^n$ is surjective for all $n \geq 2$.

Similarly, for a graded connected $R$-coring $C$, the comultiplication is def\/ined by some $R$-bimodule maps $\Delta_{p,q} \colon C_{p+q} \to C_p \ot C_q$ and the coring $C$ is called \emph{strongly graded} if and only if all of them are injective.

Let $\Delta^1 = \operatorname{Id}_{C_1}$ and for $n \geq 2$ def\/ine a map $\Delta^n\colon C_n \to C_1^{(n)}$ by the recurrence relation
\begin{gather*}
	\Delta^n = \big(\operatorname{Id}_{C_1} \ot \Delta^{n-1}\big) \circ \Delta_{1,n-1}.
\end{gather*}

It is immediate from the def\/inition and the coassociativity property that the following equality holds true
\begin{gather*}
	\Delta^{p+q} = (\Delta^p \ot \Delta^q) \circ \Delta_{p,q}.
\end{gather*}
Another remark is that the strong grading on $C$ can be seen to be equivalent to the injectivity of all of the maps $\Delta^n$, $\forall\, n$ and further that $\Delta_{1,n}$ is injective for all $n$ if and only if $\Delta_{n,1}$ is injective for all~$n$.

When $C$ is a connected $R$-coring, the unit of $R$ is a group-like element, that is $\Delta(1) = 1 \ot 1$, cf.~\cite{jps}. Therefore, we can speak of primitive elements of~$C$, namely those $c \in C$ for which $\Delta(c) = 1 \ot c + c \ot 1$. The set of all primitive elements in $C$ contains $C_1$ and will be denoted by~$PC$. In general, the inclusion of $C_1$ in $PC$ is strict, but as per \cite[Lemma~1.4]{mst}, $PC = C_1$ if and only if~$C$ is strongly graded.

Indecomposable elements of an augmented $R$-ring correspond by duality to primitive elements. They were introduced by May in~\cite{may} and we will see how they relate to the strong grading on an $R$-ring. In the graded case, the $R$-bimodule $QA$ of indecomposable elements is def\/ined by the exact sequence
\begin{gather*} A_+ \ot A_+ \xrar{\mu} A_+ \to QA \to 0.\end{gather*}
There is a canonical morphism $A^1 \to QA$, which maps $a \in A^1$ to its class $a + A_+^2 \in QA$. This map has a left inverse, induced by the projection $A_+ \to A^1$.

There is a canonical morphism $A^1 \to QA$ which maps an element in $A^1$ to its class modulo the square of the augmentation ideal $A_+$ in~$QA$. This map has a left inverse, induced by the projection $A_+ \to A^1$.

We can prove a f\/irst result, which is a dual version of \cite[Lemma~1.4]{mst} and which shows the role of $QA$, the bimodule of indecomposable elements in an $R$-ring. Moreover, the following lemma will be used for obtaining our main characterization theorem for Koszul corings (i.e., Theorem~\ref{thm:kcoring}).

\begin{Lemma} \label{lema:ring}
Let $A$ be a graded and connected $R$-ring.
\begin{enumerate}\itemsep=0pt
\item[$1.$] $A$ is strongly graded if and only if the canonical map $QA\to A^1$ is injective, if and only if the canonical map $A^1\to QA$ is surjective.

\item[$2.$] If $A$ is strongly graded, $B$ is a connected graded $R$-ring and $g\colon B \rar A$ is a morphism of graded $R$-rings such that its components $g^0$ and $g^1$ are surjective, then $g$ is surjective.

\item[$3.$] Let $A=\oplus_{n,m\in\bb{N}}A^{n,m}$ be bigraded. If $\operatorname{gr}A$ is strongly graded and $A^{n,m}=0$ for $n=0,1$ and all $m \neq n$, then $A^{n,m}=0$ for all $n\geq 2$, $m\neq n$.
\end{enumerate}
\end{Lemma}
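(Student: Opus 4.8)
All three statements rest on the single principle that a connected graded $R$-ring is strongly graded exactly when it is generated by its degree-one component; the proof in each case is a matter of making this visible. For \emph{Part 1} I would read off the graded pieces of $QA$ from its defining exact sequence: since $A_+\ot A_+\xrar{\mu}A_+\to QA\to0$ is a sequence of graded bimodules, $(QA)^n=A^n/\sum_{p+q=n,\,p,q\geq1}\mu^{p,q}\big(A^p\ot A^q\big)$, so $(QA)^0=0$ and $(QA)^1=A^1$. The canonical map $A^1\to QA$ is then the inclusion of the degree-one summand, while the canonical map $QA\to A^1$ is the retraction induced by the projection $A_+\to A^1$; thus $QA\to A^1$ is a split epimorphism with section $A^1\to QA$. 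Consequently $QA\to A^1$ is injective if and only if it is an isomorphism, if and only if its section $A^1\to QA$ is surjective, and all of these hold if and only if $(QA)^n=0$ for every $n\geq2$. It then remains to check that $(QA)^{\geq2}=0$ is equivalent to strong grading: if $\mu_n$ is surjective then $A^n=\mu^{n-1,1}\big(A^{n-1}\ot A^1\big)$ gives $(QA)^n=0$, and conversely, assuming $(QA)^n=0$ for all $n\geq2$, one shows by induction on $n$ that $A^n=\mu_n\big((A^1)^{(n)}\big)$, using $A^n=\sum_{p+q=n}\mu^{p,q}\big(A^p\ot A^q\big)$ together with associativity in the form $\mu^{p,q}\circ(\mu_p\ot\mu_q)=\mu_n$.

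\emph{Part 2} I would prove degree by degree, the cases $n=0,1$ being the hypotheses on $g^0$ and $g^1$. For $n\geq2$, surjectivity of $g^1$ forces surjectivity of $(g^1)^{(n)}\colon(B^1)^{(n)}\to(A^1)^{(n)}$ (tensor products of epimorphisms are epimorphisms), and since $g$ is a morphism of graded $R$-rings an easy induction on $n$ yields $g^n\circ\mu_n^B=\mu_n^A\circ(g^1)^{(n)}$. Combined with the strong grading $A^n=\mu_n^A\big((A^1)^{(n)}\big)$ this gives $A^n=g^n\big(\mu_n^B\big((B^1)^{(n)}\big)\big)\subseteq g^n(B^n)$, so $g^n$ is surjective.

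For \emph{Part 3} the first step is to record the structure of $\gr A$: it is a bigraded $R$-ring whose multiplication is additive in each of the two indices, $(\gr A)^{p,k}\ot(\gr A)^{q,l}\to(\gr A)^{p+q,\,k+l}$, and whose underlying bigraded bimodule is $\bigoplus_{n,m}A^{n,m}$ up to the evident identification. The hypothesis at $n=0$ then says $\gr A$ is connected, $(\gr A)^{0}=(\gr A)^{0,0}=R$, and the hypothesis at $n=1$ says $(\gr A)^{1}=(\gr A)^{1,1}=:V$. Since $\gr A$ is strongly graded, for every $n\geq1$ the component $(\gr A)^n$ is the image of the iterated multiplication $V^{(n)}=\big((\gr A)^1\big)^{(n)}\to(\gr A)^n$, and by additivity of the second index this image is contained in $(\gr A)^{n,n}$; hence $(\gr A)^{n,m}=0$ for $m\neq n$, which transfers to $A^{n,m}=0$ for all $n\geq2$ and $m\neq n$. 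The only point requiring real attention is precisely this identification of $\gr A$ as a bigraded ring whose product does not mix the second degree — once that is in place Part 3 is immediate, and Parts 1 and 2 are bookkeeping around the ``generated in degree one'' reformulation of strong grading.
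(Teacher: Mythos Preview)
Your proof is correct and, for Parts~1 and~2, matches the paper's argument in substance (the paper phrases Part~2 via the inductive square $B^n\ot B^1\to B^{n+1}$ over $A^n\ot A^1\to A^{n+1}$ rather than going straight through $\mu_n$, but this is the same idea).

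For Part~3 there is a small but genuine difference in organization. The paper introduces the graded subring $\operatorname{Diag}(A)=\bigoplus_n A^{n,n}$ of $\gr A$ and applies Part~2 to the inclusion $\iota\colon\operatorname{Diag}(A)\hookrightarrow\gr A$: the hypotheses say $\iota^0$ and $\iota^1$ are surjective, $\gr A$ is strongly graded, so $\iota$ is surjective and hence $A^{n,m}=0$ for $m\neq n$. Your argument instead tracks the second grading through the iterated multiplication directly, observing that the image of $\big((\gr A)^1\big)^{(n)}=(A^{1,1})^{(n)}$ lands in bidegree $(n,n)$. The two are equivalent in content---both amount to ``generated in degree one, and degree one sits on the diagonal''---but the paper's version has the pleasant feature of making Part~3 a formal corollary of Part~2, while yours avoids the detour through a second ring morphism.
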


\begin{proof}
Since $QA\to A^1$ is a left inverse of $A^1\to QA$ it follows that the former map is injective if and only if the latter is surjective. On the other hand, $A$ is strongly graded if and only if $A_n\subseteq A_+^2$, for all $n\geq 2$. Consequently, the f\/irst statement holds, as the above inclusions are true if and only if the map $A^1\to QA$ is surjective.

The proof of the second statement follows as in the case of corings \cite[Lemma~1.4]{mst}, using the diagram below
\begin{gather*}
 \xymatrixcolsep{2pc}\xymatrix{\ar[r]^-{\mu^{n,1}_{B}} B_{n} \otimes B_{1} \ar[d]_-{g_n \otimes g_1} &B_{n+1} \ar[d]^-{g_{n+1}} \\
A_{n} \otimes A_{1} \ar[r]^-{\mu^{n,1}_{A}} & A_{n+1}. }
\end{gather*}
For proving the last part of the lemma we def\/ine $\operatorname{Diag}(A)$ to be graded subring of $\operatorname{gr}(A)$ given by $\operatorname{Diag}(A)=\oplus_{n\in\N} A^{n,n}$. Let $\iota\colon \operatorname{Diag}(A)\to \operatorname{gr}(A)$ denote the inclusion map. Obviously, by the standing assumptions, $\iota^0$ and $\iota^1$ are surjective maps. Hence, by the second part of the lemma, $\iota$~is surjective. Thus $A^{n,m}$ must be zero for all $n$ and $m$, with $n\neq m$.
\end{proof}

\subsection[The $R$-ring $\langle V,W\rangle$ and the $R$-coring $\{V,W\}$]{The $\boldsymbol{R}$-ring $\boldsymbol{\langle V,W\rangle}$ and the $\boldsymbol{R}$-coring $\boldsymbol{\{V,W\}}$}\label{shriek}

For an $R$-bimodule $V$ and a sub-bimodule $W\subseteq V\ot V$, we def\/ine the $R$-ring $\langle V, W \rangle$ to be the quotient $T^a_R (V)/\langle W \rangle$ of the tensor algebra of the $R$-bimodule $V$ by the two-sided ideal generated by $W$. Note that $\langle W\rangle=\sum\limits_{n\in\mathbb{N}} \langle W\rangle^n$, where
 \begin{gather*}
 \langle W\rangle^n=\sum_{i=1}^{n-1}V^{(i-1)}\ot W\ot V^{ (n-i-1)}. \end{gather*}
For $V$ and $W$ as above, one also constructs a graded $R$-coring $\br{V,W}$ by taking $\br{V,W}_0=R$ and $\br{V,W}_1=V$. For all $n \geq 2$ def\/ine
\begin{gather*}
\br{V,W}_n=\bigcap_{p=1}^{n-1} V^{(p-1)} \otimes W \otimes V^{ (n-p-1)}.
\end{gather*}
As it is shown in~\cite{jps}, the direct sum $\br{V,W}=\oplus_{n \in \mathbb{N}} \br{V,W}_n$ is a graded subcoring of $T^c_R(V)$, the tensor coalgebra of the $R$-bimodule~$V$.

If $A$ is a connected graded $R$-ring and $C$ is a connected graded $R$-coring then the graded coring $\{A^1,\Ker\mu^{1,1}\}$ and the graded ring $\gen{C_1,\im\Delta_{1,1}}$ will be denoted by $A^!$ and $C^!$, respectively and called the \emph{shriek coring} and the \emph{shriek ring}, respectively. The homogeneous component of degree~$n$ of~$A^!$ will be denoted by $A^!_n$. To simplify the notation, for the ring $C^!$ we shall write $C^!_n$ instead of~$(C^!)^n$.

\begin{Remark} In the literature, the shriek construction (also known as \emph{the quadratic dual} \cite[Section~2.8]{bgs}, for example) is def\/ined and used slightly dif\/ferent that our setting. However, we note that throughout this paper, the only meaning of the shriek structures is that introduced above. Note that, unlike the classical case of~\cite{bgs}, for example, here the shriek construction changes an $R$-ring into an $R$-coring and viceversa. \end{Remark}

\subsection{Bigraded corings}
Some basic facts regarding bigraded corings are due, since many of the structures which we will use are of this kind.

An $R$-coring $C$ is called \emph{bigraded} if it has a decomposition as a direct sum of $R$-bimodules $C=\oplus_{n,m \in \mathbb{N}} C_{n,m}$ such that its comultiplication induces a collection of maps
\begin{gather*} C_{n+n',m+m'} \xrightarrow{\Delta_{n,m}^{n',m'}} C_{n,m} \otimes C_{n',m'}.\end{gather*}
For this case, coassociativity means that the diagram below is commutative, for all positive integers $m$, $n$, $p$, $m'$, $n'$ and $p'$,
\begin{gather*}
 \xymatrixcolsep{5pc}\xymatrix{ C_{n+m+p,n'+m'+p'} \ar[d]_-{\Delta_{n,n'}^{m+p,m'+p'}} \ar[r]^-{\Delta_{n+m,n'+m'}^{p,p'}} & C_{n+m,n'+m'} \otimes C_{p,p'} \ar[d]^-{\Delta_{n,n'}^{m,m'} \otimes \Id_{C_{p,p'}}} \\
C_{n,n'} \otimes C_{m+p,m'+p'} \ar[r]_-{\Id_{C_{n,n'}} \otimes \Delta_{m,m'}^{p,p'}} & C_{n,n'} \otimes C_{m,m'} \otimes C_{p,p'}.}
\end{gather*}
By def\/inition, we also impose to the counit to vanish on $C_{n,m}$, provided that either $n>0$ or $m>0$.

Starting with a bigraded coring, one can associate to it a graded coring $\operatorname{gr}(C)$, whose homogeneous component of degree $n$ is $\operatorname{gr}_n(C)=\oplus_m C_{n,m}$. Therefore, def\/ine $\operatorname{gr}(C):=\oplus_n \operatorname{gr}_n(C)$. We will be interested only in \textit{connected bigraded corings}, namely those bigraded corings for which $C_{0,0}=R$ and $C_{0,m}=0$, for $m>0$. Note that $C$ to be connected implies $\operatorname{gr}(C)$ is connected as well.

Let $C$ be a (connected) bigraded coring. Def\/ine
\begin{gather*} C'_{n,m}=\begin{cases} C_{n,m}, & n=m, \\ 0, & n\neq m.\end{cases}\end{gather*}
Then ${C'}:=\oplus_{n,m} {C}_{n,m}'$ becomes a (connected) bigraded coring. We denote the graded coring $\operatorname{gr}({C'})$ associated to it by $\operatorname{Diag}(C)$.

Keeping the notations and the context above, there exist canonical $R$-bimodule morphisms $\pi_{n,m} \colon C_{n,m} \to C'_{n,m}$ which are identities on $C_{n,n}$ and zero maps in rest. Their collection, $\pi = \{\pi_{n,m}\}_{n,m}$, def\/ines a morphism in the category of graded corings. Since $\operatorname{Diag}(C) = \operatorname{gr}(C')$, it follows that the map $\pi$ induces a morphism at the level of graded corings, namely $\operatorname{gr}(\pi) \colon \operatorname{gr}(C) \to \operatorname{Diag}(C)$. Surely the homogeneous component of degree $n$ of $\operatorname{Ker}\pi$ is $\op_{n \neq m} C_{n,m}$.

Note that one can def\/ine by duality the corresponding notions of bigraded $R$-rings. We omit the details here, since this case is better known than the one for corings. See, for example, \cite{pi} for the dif\/ferential graded setting.

\subsection{The normalized (co)chain complex} \label{sec-t(a)}
We shall compute $T_n(A):=\Tor_n^A(R,R)$ as the $n$th homology group of the normalized bar complex $(\Omega_\bullet(A), d_\bullet)$, where $\Omega_n(A)=\Ap{n}$. The morphisms $d_n\colon \Omega_n(A)\to\Omega_{n-1}(A)$ are def\/ined by $d_1=0$ and, for $n>1$,
\begin{gather*}
d_n(a_1\ot\cdots \ot a_n)=\sum_{i=1}^{n-1}(-1)^{i-1}a_1\ot\cdots \ot a_ia_{i+1}\ot\cdots\ot a_n.
\end{gather*}
Since the normalized bar complex has a canonical structure of DG-coalgebra in the category of $R$-bimodules with respect to the comultiplication $\Delta_{p,q}(a_1\ot\cdots \ot a_{p+q})=(a_1\ot\cdots \ot a_{p})\ot(a_{p+1}\ot\cdots \ot a_{p+q})$ it follows that $T(A)=\oplus_{n \in \mathbb{N}} T_n(A)$ has a canonical structure of connected $R$-coring.
	
One can express the complex $\Omega_\bullet(A)$ as a direct sum $\Omega_\bullet(A)=\oplus_{m \geq 0} \Omega_\bullet(A,m)$ of subcomplexes. As in \cite[Section~1.6]{mst}, we introduce the following terminology and notation. An $n$-tuple $\boldsymbol{m}=(m_1,\dots,m_n)$ is called \textit{a positive $n$-partition of $m$} if and only if $\sum\limits_{i=1}^n m_i=m$ and all $m_i$ are positive. The set of all positive $n$-partitions of $m$ will be denoted by $\mathscr{P}_n(m)$. Furthermore, if $\boldsymbol{m}=(m_1,\dots,m_n)$ is a positive $n$-partition and $A$ is a connected $R$-ring then for the tensor product $A^{m_1}\ot\cdots\ot A^{m_n}$ we shall use the notation $A^{\boldsymbol{m}}$. For a positive $n$-partition $\bs{m}=(m_1,\dots,m_n)$ of~$m$, the multiplication $\m$ of $A$ induces bimodule maps $\mu^{\bs{m}}\colon A^{\bs{m}}\to A^m$ and $\mu_{\bs{m}}\colon (A^1)^{(m_1)}\ot\cdots \ot(A^1)^{(m_n)}\to A^{\bs{m}}$. Note that, by def\/inition, $\mu_{\bs{m}}=\m(m_1)\ot\cdots\ot\m(m_n)$.

Hence, with this notation, $\Omega_\bullet(A,m)$ is the following subcomplex of $\Omega_\bullet(A)$:
\begin{gather*}
0 \xleftarrow{d_0^m} 0 \xleftarrow{d_1^m} \bigoplus \limits_{\boldsymbol{m}_1\in\mathscr{P}_1(m)} A^{\boldsymbol{m}_1} \xleftarrow{d_2^m} \bigoplus \limits_{\boldsymbol{m}_2\in\mathscr{P}_2(m)} A^{\boldsymbol{m}_2}\xleftarrow{d_3^m} \cdots \xleftarrow{d^m_n}  \bigoplus \limits_{\boldsymbol{m}_n\in\mathscr{P}_{n}(m)} A^{\boldsymbol{m}_n}\xleftarrow{\ \ \ }\cdots .
\end{gather*}
Of course, there is only one $1$-partition of $m$, namely $\boldsymbol{m}_1=(m)$. Thus the f\/irst direct sum in~$\Omega_\bullet(A,m)$ coincides with $A^m$. The homology in degree $n$ of $\Omega_\bullet(A,m)$ will be denoted either by~$T_{n,m}(A)$ or~$\operatorname{Tor}_{n,m}^A(R,R)$. Clearly, we have $T(A)=\oplus_{m\geq 0} T_{n,m}(A)$ and this decomposition is compatible with the coring structure, in the sense that $T(A)$ is a bigraded coring with~$T_{n,m}(A)$ as $(n,m)$-homogeneous component.
	
Dually, for a connected $R$-coring $C$, the normalized bar cochain complex $(\Ou{\bullet}{C}, d^\b)$ is def\/ined by $\Ou{n}{C}=\Cp{n}$ and $d^0=0$, while
\begin{gather*}
d^n=\sum_{i=1}^{n}(-1)^{i-1}\I_{C_+^{(i-1)}}\ot\;\Delta_+\ot\I_{C_+^{(n-i)}}.
\end{gather*}
Here $C_+:=C/C_0$ and $\Delta_+\colon C_+\to C_+\ot C_+$ is the map induced by the comultiplication of~$C$. It is well-known that $E^n(C)=\operatorname{Ext}^n_C(R,R)$ is the $n$th cohomology group of $\Omega^\bullet(C)$ and that $E(C)=\oplus_{n \in \mathbb{N}} \ E^n(C)$ is a connected $R$-ring with respect to the multiplication induced by the DG-algebra structure (in the category of $R$-bimodules) on $\Ou{\b}{C}$ which is def\/ined by concatenation of tensor monomials (see, e.g., \cite[Section~1.1]{PP}).

The complex $\Omega^\b(C)$ is a direct sum of subcomplexes $\Omega^\b(C,m)$, that are def\/ined as follows. For a positive $n$-partition of $m$ let $C_{\boldsymbol{m}}:=C_{m_1}\ot \cdots\ot C_{m_n}$. Hence $\Omega^\b(C,m)$ is the subcomplex
\begin{gather*}
\ 0 \xrightarrow{\ \ \ } 0 \xrightarrow{d_m^0} \bigoplus \limits_{\boldsymbol{m}_1\in\mathscr{P}_1(m)} C_{\boldsymbol{m}_1} \xrightarrow{d_m^1} \bigoplus \limits_{\boldsymbol{m}_2\in\mathscr{P}_2(m)} C_{\boldsymbol{m}_2} \xrightarrow{d_m^2} \cdots \xrightarrow{d^{n-1}_m}  \bigoplus \limits_{\boldsymbol{m}_n\in\mathscr{P}_{n}(m)} C_{\boldsymbol{m}_n} \xrightarrow{d_m^n}\cdots .
\end{gather*}
Of course, $\Omega^1(C,m)=C_m$. The homology in degree $n$ of $\Omega^\bullet(C,m)$ will be denoted either by $E^{n,m}(C)$ or $\operatorname{Ext}^{n,m}_C(R,R)$. Clearly, we have $E(C)=\oplus_{m\geq 0}E^{n,m}(C)$ and this decomposition is compatible with the ring structure in the sense that $E(C)$ is a bigraded ring with $E^{n,m}(C)$ as $(n,m)$-homogeneous component. For more details the reader is referred to \cite[Section~1.15]{jps}.

\subsection{Almost-Koszul pairs} \label{sec:almost}
As mentioned in the introduction, an almost Koszul pair $(A,C)$ consists of a connected and graded $R$-ring $A$ and a connected graded $R$-coring $C$, together with an isomorphism of $R$-bimodules $\theta_{A,C} \colon C_1 \rightarrow A^1$ which satisf\/ies the relation \eqref{ec:theta}. Using the graded version of Sweedler's notation for the comultiplication of $C$, the above equation is equivalent to
\begin{gather} \label{ec:qksz}
	\sum \theta_{C,A}(c_{1,1})\theta_{C,A}(c_{2,1})=0,
\end{gather}
where $c$ is an arbitrary element of $C_2$.

If $(A,C)$ and $(B,D)$ are almost-Koszul pairs, then a morphism of almost-Koszul pairs is a~couple $(\phi,\psi)$, where $\phi\colon A \rar B$ is a morphism of graded $R$-rings and $\psi\colon C \rar D$ is a morphism of graded $R$-corings such that they commute with the isomorphisms $\theta_{A,C}$ and $\theta_{B,D}$. Henceforth, the diagram below is commutative,
\begin{gather*}
 \xymatrix{ A^1 \ar[r]^-{\phi_1} \ar[d]_-{\theta_{A,C}} & B^{1} \ar[d]^-{\theta_{B,D}} \\
C_1 \ar[r]^-{\psi_1} & D_1.}
\end{gather*}
A f\/irst example of an almost-Koszul pair is given in \cite[Proposition 1.8]{jps}. For any connected and strongly graded $R$-ring $A$, the pair $\big(A,T(A)\big)$ is almost-Koszul. Here, the coring structure of $T(A)$ is def\/ined as in Section~\ref{sec-t(a)}, and the map $\theta_{A,T(A)}\colon T_1(A)\to A^1$ is induced by the projection $A_+\to A^1$. Note that $T_1(A)=A_+/A_+^2\cong A^1$, as $A$ is strongly graded, so $\theta_{A,T(A)}$ is an isomorphism.

The couple $(A,A^!)$ is another example of an almost-Koszul pair. Recall that $A^!_1=A^1$, so we can take $\theta_{A,A^!}:=\Id_{A^1}$. The condition~\eqref{ec:theta} is trivial in this particular case as, by construction, $A^!_2=\Ker\mu^{1,1}$.

Dually, if $C$ is a connected and strongly graded $R$-coring, the pair $(E(C),C)$ is almost-Koszul, cf.~\cite[Proposition~1.18]{jps}. In this example the $R$-ring structure of $E(C)$ is def\/ined in Section~\ref{sec-t(a)}. Since $E^1(C)=\Ker\Delta_+$, the map $\theta_{E(C),C}\colon C_1\to E^1(C)$, given by $\theta_{E(C),C}(c):=c+C_0\in C/C_0$, is well def\/ined and it is an isomorphism of $R$-bimodules.

The couple $(C^!,C)$ can be seen as an almost-Koszul pair with respect to the map $\theta_{C^!,C}=\Id_{ C_1}$. The relation \eqref{ec:theta} is verif\/ied since $C^!_2:=(C_1\ot C_1)/\im\Delta_{1,1}$ and the component $C_1^!\ot C_1^!\to C^!_2$ of the multiplication on $C^!$ coincides with the canonical projection from $C_1\ot C_1$ onto $C^!_2$.

\subsection{Koszul pairs} \label{fa:Koszul pairs}
Following \cite{jps} we shall brief\/ly recall the def\/inition of Koszul pairs, which are our main tool to investigate Koszul $R$-(co)rings. For any almost-Koszul pair $(A,C)$ and $n\geq 0$ we def\/ine a~complex of graded right $C$-comodules by
\begin{gather*} \operatorname{K}_r^{-1}(A,C)=R \qquad \text{and} \qquad \operatorname{K}^n_r(A,C)=A^n \otimes C, \qquad \forall\, n \geq 0.\end{gather*}
The dif\/ferential map $d_{r}^{n}\colon A^{n}\otimes C\rightarrow A^{n+1}\otimes C$ is zero on $A^{n}\otimes C_{0}$ and, for $p>0$ and $a\otimes c\in A^{n}\otimes C_{p},$
\begin{gather*}
 d_{r}^{n}(a\otimes c)=\sum\limits a\theta _{C,A}(c_{1,{1}})\otimes c_{2,{p-1}}.
\end{gather*}
The dif\/ferential $d^n_r$ maps $A^{n}\ot C_{p}$ to $A^{n+1}\ot C_{p-1}$. Thus $\K^\bullet_r(A,C,m)$ is a subcomplex of $\K^\bullet_r(A,C)$, for all $m\in\N$, where $\K^n_r(A,C,m)= A^{n}\ot C_{m-n}$. Note that, by convention, $C_p=0$ for $p<0$, so $\K^n_r(A,C,m)$ is trivial if either $n<-1$ or $n>m$.

Using the fact that $(A^{\rm op},C^{\rm op})$ is an almost-Koszul pair over $R^{\rm op}$, cf.\ \cite[Remark~1.4]{jps}), a~complex of left $C$-comodules is obtained by setting $\operatorname{K}_l^\bullet(A,C)=\operatorname{K}_r^\bullet(A^{\rm op},C^{\rm op})$.

By combining the complexes $\operatorname{K}_l^\bullet(A,C)$ and $\operatorname{K}_r^\bullet(A,C)$, in \cite{jps} one constructs another cochain complex $\operatorname{K}^\bullet(A,C)$ in the category of $C$-bicomodules. Since we do not use it in this paper, we omit its def\/inition.

By duality, to an almost-Koszul pair correspond also three chain complexes $\operatorname{K}^l_\bullet\!(A{,}C)$, $\operatorname{K}^r_\bullet\!(A{,}C)$ and $\operatorname{K}_\bullet(A,C)$. For instance, $\operatorname{K}^l_\bullet(A,C)$ is the complex of left $A$-modules,
\begin{gather*} \operatorname{K}_{-1}^l(A,C)=R \qquad \text{and} \qquad \operatorname{K}^l_n(A,C)=A \ot C_n,\end{gather*}
whose dif\/ferential $d_{0}^l$ is given by the left action of $A$ on $R=C_0$. For $n > 0$, the map $d_n^l$ acts as
\begin{gather*}
	d_{n}^{l}(a\otimes c)=\sum\limits a\theta _{C,A}(c_{1,{1}})\otimes c_{2,{n-1}}.
\end{gather*}
The complex $\operatorname{K}_\bullet^l(A,C)$ also decomposes as a direct sum $\oplus_{m\geq 0}\operatorname{K}^l_\bullet(A,C,m)$ of subcomplexes, where ${K}^l_n(A,C,m)=A^{m-n}\ot C_n$. Note that, ${K}^l_n(A,C,m)=0$, for $n>m$.

We conclude this subsection by recalling that the exactness of any of these six complexes implies the exactness of all the others, cf.\ \cite[Theorem~2.3]{jps}. Whenever this is the case, $(A,C)$ is called \emph{Koszul}. Thus, for a Koszul pair $(A,C)$, the complexe $\operatorname{K}_\bullet^l(A,C)$ is a resolution of $R$ by graded projective left $A$-modules. Similarly, for such a pair, $\operatorname{K}^\bullet_r(A,C)$ is a resolution of $R$ by injective graded right $C$-comodules.

The subcomplex $\K_\bullet^l(A,C,0)$ is trivial in degree $n$, for all $n\neq -1,0$ and $d_{-1}^l\colon R\to A^0\ot C_0$, coincides with the canonical isomorphism $R\cong R\ot_R R$. Thus, $\K_n^l(A,C,0)$ is always exact. We deduce that the pair $(A,C)$ is Koszul if and only if the complexes $\K_\bullet^l(A,C,m)$ are exact for all $m>0$. By duality, $(A,C)$ is Koszul if and only if $\K^\bullet_r(A,C,m)$ is Koszul for all $m>0$.

\subsection{Examples of morphisms of almost-Koszul pairs}\label{fa:exemples_morphisms}
We start by def\/ining a morphism $(\Id_A,\phi^A)$ from $(A,A^!)$ to $\big(A,T(A)\big)$. The graded coring morphism $\phi^A$ is obtained applying \cite[Proposition 1.24]{jps} as follows. We keep the notation from the above mentioned result. Deleting the component of degree $-1$ and then applying the functor $R\ot_A (-)$ to $\phi _{\bullet}\colon \K_{\bullet}^{l}(A,A^!)\to \beta_{\bullet}^{l}(A)$ we get a morphism of complexes from $R\ot_A \K_{\bullet}^{l}(A,A^!)$ to $R\ot_A \beta_{\bullet}^{l}(A)$. Note that the last two complexes are concentrated in non-negative degrees. By \cite[Proposition 1.23]{jps} the former complex is isomorphic to $(A^!,0)$, the complex with tri\-vial dif\/ferential maps and whose module of $n$-chains coincides with~$A^!_n$. On the other hand, by def\/inition, the latter complex equals $\Od{\bullet}{A}$. Since $\Id_R\ot\phi_\bullet$ is compatible with the coring structure of~$A^!$ and~$\Ou{\bullet}{A}$, by applying the homology functor we get the desired coring map $\phi^A\colon A^!\to T(A)$. Let us remark that
\begin{gather*} A^!_n:=\bigcap_{i=1}^{n-1}\big({A^1}\big)^{(i-1)}\ot\Ker\mu^{1,1}\ot \big({A^1}\big)^{(n-i-1)}\subseteq A_+^{(n)},
\end{gather*}
so any $x$ in $A^!_n$ is an $n$-cycle in $\Od{\bullet}{A}$ and $\phi^A(x)$ as the homology class of $x$. It is easy to check now that $(\Id_A,\phi^A)$ is a morphism of almost-Koszul pairs.

In a similar way one def\/ines a canonical morphism of almost Koszul-pairs $(\phi_C,\Id_C)$ from $(E(C),C)$ to $(C^!,C)$. Deleting the component of degree $-1$ of $\phi ^{\bullet}\colon \beta^\bullet_{r}(C)\to \K^\bullet_{r}(C^!,C)$ and then applying the functor $\Hom^C(R, -)$ to the resulting map of complexes we get a morphism $\ol{\phi}^{\bullet}$ from~$\Ou{\bullet}{C}$ to the cochain complex with trivial dif\/ferential maps $(C^!,0)$, cf.\ \cite[Propositions~1.23 and~1.24]{jps}. Henceforth, we can def\/ine the coring map $\phi_C\colon E(C)\to C^!$ by $\phi_C=H^\bullet(\ol{\phi}^{\bullet})$. Let us notice that, for any $x_1,\dots,x_n\in C_+$, we have
\begin{gather*}
 \ol{\phi}^{n}(x_1\ot\cdots \ot x_n)=a_1a_2\cdots a_n\in C^!_n,
\end{gather*}
where $a_i= (\theta_{C^!,C}\circ\ol{\pi}_1)(x_i)\in C^!_1$ and $\ol{\pi}_1\colon C_+\to C_1$ denotes the map induced by the projection $C\to C_1$. Thus, $\phi_C$ maps the cohomology class of an $n$-cocyle $\omega$ to $ \ol{\phi}^{n}(\omega)$.

\section[Koszul $R$-rings revisited]{Koszul $\boldsymbol{R}$-rings revisited}\label{section3}

The main goal of this section is to characterize Koszul $R$-rings in terms of properties of the $R$-coring $T(A)$. In particular, we shall recover the well known result that $A$ is Koszul if and only if $T_{n,m}(A)=0$ for $n\neq m$ (cf.~\cite[Theorem~1.9(6)]{mst}). We also include here a new proof of the fact that a Koszul $R$-ring is quadratic, cf.~\cite[Proposition~1.2.3]{bgs}. Our approach will allow us to show in the next section that similar results holds true for connected graded $R$-corings.

A comprehensive characterization of Koszul rings was given in \cite[Theorem~1.10]{mst}. For completeness, let us recall it here, in a slightly modif\/ied form, to put in evidence the interplay with Koszul pairs.

\begin{Theorem}\label{t-k7}
Let $A$ be a connected strongly graded $R$-ring. The following are equivalent:
\begin{enumerate}\itemsep=0pt
\item[$(1)$] the $R$-ring $A$ is Koszul;
\item[$(2)$] the pair $(A,T(A))$ is Koszul;
\item[$(3)$] the pair $(A,A^!)$ is Koszul;
\item[$(4)$] the canonical $R$-coring morphism $\phi^A \colon A^! \to T(A)$ is an isomorphism;
\item[$(5)$] the $R$-coring $T(A)$ is strongly graded;
\item[$(6)$] any primitive element of $T(A)$ is homogeneous of degree~$1$;
\item[$(7)$] if $n \neq m$, then $T_{n,m}(A) = 0$.	
\end{enumerate}
\end{Theorem}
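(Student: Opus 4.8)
The plan is to treat the classical equivalences as known and to splice in the three Koszul-pair conditions $(2)$, $(3)$, $(4)$ by exploiting the comparison morphism $\phi^{A}\colon A^{!}\to T(A)$ together with the fact that the shriek coring is automatically strongly graded. First I would record that $(1)\Leftrightarrow(5)\Leftrightarrow(6)\Leftrightarrow(7)$ is \cite[Theorem~1.10]{mst} (with $(1)\Leftrightarrow(7)$ being \cite[Theorem~1.9(6)]{mst}); two of these links are in any case trivial and worth isolating, namely $(5)\Leftrightarrow(6)$, which is \cite[Lemma~1.4]{mst} applied verbatim to the coring $T(A)$, and $(5)\Rightarrow(7)$, which is a bigrading count: since $A$ is strongly graded, $T_{1}(A)=A_{+}/A_{+}^{2}\cong A^{1}$ is pure of internal degree $1$, so the iterated comultiplication $\Delta^{n}$ of the bigraded coring $T(A)$ sends $T_{n}(A)$ into $\big(A^{1}\big)^{(n)}$, which is pure of bidegree $(n,n)$; injectivity of $\Delta^{n}$ then forces $T_{n,m}(A)=0$ whenever $m\neq n$.

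The heart of the proof is $(3)\Rightarrow(4)$. Assuming $(A,A^{!})$ is Koszul, $\K^{l}_{\bullet}(A,A^{!})$ is a projective resolution of $R$, so the comparison map $\phi_{\bullet}\colon \K^{l}_{\bullet}(A,A^{!})\to\beta^{l}_{\bullet}(A)$ recalled in Section~\ref{fa:exemples_morphisms}, which lifts $\operatorname{Id}_{R}$ between two complexes of projectives resolving $R$, is a homotopy equivalence; applying $R\ot_{A}(-)$, which by \cite[Proposition~1.23]{jps} identifies the source with $(A^{!},0)$ and, by construction, the target with $\Od{\bullet}{A}$, and then passing to homology shows that $\phi^{A}$ is an isomorphism. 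Next, $(4)\Rightarrow(5)$ is easy: the coring $A^{!}=\br{A^{1},\Ker\mu^{1,1}}$ is always strongly graded, because $\br{V,W}$ is by construction a graded subcoring of the tensor coalgebra $T^{c}_{R}(V)$ with $\br{V,W}_{n}\subseteq V^{(n)}$ and with $\Delta^{n}$ equal to this inclusion, so an isomorphism $\phi^{A}$ transports the strong grading to $T(A)$. To get back to $(1)$ I would invoke that a classically Koszul ring has an exact Koszul complex, which in the present notation is exactly the assertion that $(A,A^{!})$ is Koszul, i.e.\ $(1)\Rightarrow(3)$ (cf.\ \cite[Theorem~1.10]{mst} or the classical criterion of \cite{bgs}). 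Finally, for $(2)$: once $\phi^{A}$ is an isomorphism, $(\operatorname{Id}_{A},\phi^{A})$ is an isomorphism of almost-Koszul pairs $(A,A^{!})\cong(A,T(A))$, so Koszulity of $(A,A^{!})$ yields Koszulity of $(A,T(A))$; conversely $(2)\Rightarrow(1)$ because then $\K^{l}_{\bullet}(A,T(A))$ is a projective resolution of $R$ whose $n$th term $A\ot T_{n}(A)$ is generated in degree $n$. Assembling $(1)\Rightarrow(3)\Rightarrow(4)\Rightarrow(5)\Rightarrow(1)$ and $(4)\Rightarrow(2)\Rightarrow(1)$, together with $(5)\Leftrightarrow(6)$ and $(5)\Leftrightarrow(7)$, closes all the equivalences.

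The step I expect to be the main obstacle is precisely $(3)\Rightarrow(4)$, and within it the point that $\phi^{A}$ must come out an isomorphism \emph{of corings} and not merely of graded $R$-bimodules: one has to know that the comparison $\phi_{\bullet}$ can be chosen compatible with the DG-coalgebra structures on the two complexes, and that $R\ot_{A}\K^{l}_{\bullet}(A,A^{!})$ genuinely has vanishing differential. Both facts are supplied by \cite[Propositions~1.23 and~1.24]{jps}, so once this homological step is in place the remaining implications are either short citations to \cite{mst} or direct consequences of the constructions of $\br{V,W}$, $T(A)$ and $\K^{l}_{\bullet}(A,-)$.
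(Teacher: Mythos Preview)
Your proposal is correct and follows essentially the same route as the paper. The paper's own ``proof'' is in fact a single sentence: all equivalences except $(2)$ are taken verbatim from \cite[Theorem~1.10]{mst}, and the added equivalence $(1)\Leftrightarrow(2)$ is deferred to \cite[Theorem~2.13]{jps}. Your write-up unpacks these citations, most notably by giving an explicit comparison-theorem argument for $(3)\Rightarrow(4)$ (which mirrors the coring-side argument the paper gives later in Theorem~\ref{thm:kcoring}), and by deriving $(2)\Rightarrow(1)$ directly from the observation that in the pair grading each $A\otimes T_{n}(A)$ is generated in degree~$n$; the paper simply cites for both. One small stylistic point: your step ``$(4)\Rightarrow(2)$'' tacitly uses $(3)$ to know $(A,A^{!})$ is Koszul before transporting Koszulity along the isomorphism~$\phi^{A}$; this is harmless since your cycle $(1)\Rightarrow(3)\Rightarrow(4)\Rightarrow(5)\Rightarrow(1)$ already gives $(4)\Rightarrow(3)$, but it would read more cleanly phrased as ``$(3)+(4)\Rightarrow(2)$''.
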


Note that compared to the cited result, we have added statement (2), which is equivalent with (1), taking into account \cite[Theorem 2.13]{jps}.

Next we give a new proof of the fact that a Koszul $R$-ring is quadratic, which is based on Lemma \ref{lemator} and Proposition \ref{proptor}. These results may be of interest in their own right.

\begin{Lemma} \label{lemator}
For any connected strongly graded $R$-ring $A=\oplus_{n \geq 0} A^n$, the following sequence is exact
\begin{gather*}0 \longrightarrow\operatorname{Tor}^A_{2,m}(R,R) \rightarrow \operatorname{Tor}_{2,m}^{A/{A^{\geq m}}}(R,R)\rightarrow A^m \rightarrow 0,
\end{gather*}
where $A^{\geq m}$ denotes the ideal $\oplus_{p \geq m} A^p$ and $\operatorname{Tor}_{2,m}^{A/A^{\geq m}}(R,R)$ is the component of homological degree $m$ of $\operatorname{Tor}_2^{A/A^{\geq m}}(R,R)$.
\end{Lemma}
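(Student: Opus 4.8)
The plan is to compute both Tor groups using the normalized bar complexes $\Omega_\bullet(A,m)$ and $\Omega_\bullet(A/A^{\geq m},m)$ from Section~\ref{sec-t(a)}, and to compare them in homological degrees $1$ and $2$. Write $B = A/A^{\geq m}$, so that $B^p = A^p$ for $p < m$ and $B^p = 0$ for $p \geq m$; the quotient map $A \to B$ induces a morphism of complexes $\Omega_\bullet(A,m) \to \Omega_\bullet(B,m)$. First I would observe that in degree $n=1$ both complexes have the single term $A^m$, respectively $B^m = 0$, and that the degree $n=2$ term of $\Omega_\bullet(A,m)$ is $\bigoplus_{\bs m_2 \in \mathscr P_2(m)} A^{\bs m_2} = \bigoplus_{i=1}^{m-1} A^i \ot A^{m-i}$, which is unchanged upon passing to $B$ since every partition $(i,m-i)$ has both parts strictly less than $m$. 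The key point is that the two bar complexes $\Omega_\bullet(A,m)$ and $\Omega_\bullet(B,m)$ therefore coincide in all degrees $n \geq 2$ (each summand $A^{\bs m_n}$ with $\bs m_n$ a positive $n$-partition of $m$, $n\geq 2$, has all parts $< m$) and differ only in degree $n=1$, where one has $A^m$ and the other has $0$, with $d_1 = 0$ in both.

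Next I would make this comparison precise. Since $d_1 = 0$ on $\Omega_\bullet(A,m)$, the image of $d_2^m\colon \bigoplus_{i} A^i\ot A^{m-i} \to A^m$ computes $\operatorname{Im} d_2^m$ inside $A^m$, and $T_{1,m}(A) = A^m/\operatorname{Im} d_2^m = QA$ in degree $m$, which vanishes for $m \geq 2$ because $A$ is strongly graded (Lemma~\ref{lema:ring}(1)); for $m=1$ the statement is a trivial check. Meanwhile $d_2^m$ for $B$ is the zero map into $B^m = 0$, so $T_{1,m}(B) = 0$ automatically and, crucially, the degree-$2$ cycles of $\Omega_\bullet(B,m)$ are \emph{all} of $\bigoplus_i A^i \ot A^{m-i}$, whereas the degree-$2$ cycles of $\Omega_\bullet(A,m)$ are $\Ker d_2^m$. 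Since $A$ is strongly graded, $d_2^m$ is surjective onto $A^m$, so we get a short exact sequence of complexes (concentrated in degrees $1$ and $2$ after truncating) relating $\Omega_\bullet(A,m)$ and $\Omega_\bullet(B,m)$ whose degree-$2$ cokernel is $A^m$ and which is an isomorphism in degrees $\geq 3$. The associated long exact sequence in homology then reads, around degree $2$,
\begin{gather*}
0 \to T_{2,m}(A) \to T_{2,m}(B) \to A^m \to T_{1,m}(A) \to T_{1,m}(B) \to 0,
\end{gather*}
and since $T_{1,m}(A) = 0$ (strong grading, $m \geq 2$; the case $m=1$ being trivial), this gives exactly the claimed four-term exact sequence
\begin{gather*}
0 \to \operatorname{Tor}^A_{2,m}(R,R) \to \operatorname{Tor}_{2,m}^{A/A^{\geq m}}(R,R) \to A^m \to 0.
\end{gather*}

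The main obstacle I anticipate is getting the bookkeeping of the connecting map exactly right: one must verify that the connecting homomorphism $T_{2,m}(B) \to A^m$ (which a priori lands in $H_1$ of the kernel complex, i.e.\ in $A^m$ since $d_1 = 0$ there) is literally induced by $d_2^m\colon \bigoplus_i A^i \ot A^{m-i} \to A^m$, and hence is surjective precisely because $A$ is strongly graded. Equivalently, one can argue more hands-on: a class in $T_{2,m}(B)$ is represented by a $2$-cycle $z \in \bigoplus_i A^i\ot A^{m-i}$ for the $B$-differential, which here means merely $z$ is killed by $d_3^m$ (the condition at degree $1$ being vacuous over $B$); sending $z \mapsto d_2^m(z) \in A^m$ is well-defined modulo $3$-boundaries since $d_2^m d_3^m = 0$, its kernel is exactly the classes represented by genuine $A$-cycles, i.e.\ $T_{2,m}(A)$, and its image is $\operatorname{Im} d_2^m = A^m$. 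I would then note that the identification of the first term with $\operatorname{Tor}^A_{2,m}(R,R)$ is immediate and that injectivity of $T_{2,m}(A) \to T_{2,m}(B)$ follows because a $2$-cycle over $A$ that bounds a $3$-chain over $B$ also bounds it over $A$ (the degree-$3$ parts of the two complexes agreeing), completing the argument.
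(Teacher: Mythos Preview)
Your proposal is correct and your ``hands-on'' description at the end is exactly the paper's proof: one identifies $T_{2,m}(A)=\Ker d_2^m/\im d_3^m$ and $T_{2,m}(A/A^{\geq m})=\big(\bigoplus_{\bs m\in\mathscr P_2(m)}A^{\bs m}\big)/\im d_3^m$ (since $(A/A^{\geq m})^m=0$), and then the surjectivity of $d_2^m$ from strong grading gives the short exact sequence directly, without passing through a long exact sequence. Two small slips to clean up: the phrase ``degree-$2$ cokernel is $A^m$'' should read that the \emph{kernel} of $\Omega_\bullet(A,m)\to\Omega_\bullet(B,m)$ is $A^m$ concentrated in degree~$1$, and ``killed by $d_3^m$'' should say that the $2$-cycle condition over $B$ is vacuous (every element of $\bigoplus_i A^i\ot A^{m-i}$ is a $B$-cycle); neither affects the substance.
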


\begin{proof}
 Since $A$ is strongly graded, for any positive $2$-partition $\boldsymbol{m}$ of $m$, we have that $A^{\boldsymbol{m}} \xrightarrow{d_2^m} A^m$ is surjective, so the following sequence is exact:
\begin{gather} \label{surj}
0 \rightarrow \Ker d^m_2 \rightarrow \bigoplus\limits_{\bs{m}\in\mathscr{P}_2(m)}A^{\bs{m}}\rightarrow A^m \rightarrow 0.
\end{gather}
Using the complex $\Omega_\bullet(A,m)$ we get
\begin{gather} \label{tor2}
\operatorname{Tor}_{2,m}^A(R,R)=\operatorname{Ker}d_2^m/\operatorname{Im}d_3^m.
\end{gather}
On the other hand, to work with the algebra $A/A^{\geq m}$, we use the complex $\Omega_\bullet(A/A^{\geq m},m)$. Since in degree $1$ this complex is trivial, for any $m$, we have
\begin{gather} \label{tor1}
\Tor_{2,m}^{A/A^{\geq m}}(R,R)= \bigoplus\limits_{\bs{m}\in\mathscr{P}_2(m)} A^{\bs{m}}/\im d_3^m.
\end{gather}
Putting all the information together, we get the sequence
\begin{gather*}
0 \longrightarrow\Ker d^m_2/\im d_3^m \longrightarrow \bigoplus\limits_{\bs{m}\in\mathscr{P}_2(m)} A^{\bs{m}}/\im d_3^m \longrightarrow A^m \longrightarrow 0,
\end{gather*}
which is also exact due to (\ref{surj}) and gives the desired sequence, because of the relations (\ref{tor2}) and (\ref{tor1}).
\end{proof}

\begin{Proposition}\label{proptor}
Let $\pi\colon B\to A$ denote a morphism of strongly graded connected $R$-rings. If $m$ is a positive integer such that the components $\pi^i$ are bijective, for all $0\leq i <m$, and $\Tor_{2,m}^A(R,R)=0$, then $\pi^m$ is bijective as well.
\end{Proposition}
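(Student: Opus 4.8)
The strategy is to deduce the statement from Lemma~\ref{lemator} by a short diagram chase. Since both $A$ and $B$ are strongly graded, Lemma~\ref{lemator} yields short exact sequences
\begin{gather*}
0 \longrightarrow \Tor_{2,m}^{B}(R,R) \longrightarrow \Tor_{2,m}^{B/B^{\geq m}}(R,R) \xrightarrow{\ g\ } B^m \longrightarrow 0, \\
0 \longrightarrow \Tor_{2,m}^{A}(R,R) \longrightarrow \Tor_{2,m}^{A/A^{\geq m}}(R,R) \xrightarrow{\ h\ } A^m \longrightarrow 0,
\end{gather*}
and the hypothesis $\Tor_{2,m}^{A}(R,R)=0$ at once shows that $h$ is an isomorphism. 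Next I would note that $\pi$ induces a morphism of $R$-rings $\bar\pi\colon B/B^{\geq m}\to A/A^{\geq m}$ and that, since $B/B^{\geq m}=\bigoplus_{i<m}B^i$, $A/A^{\geq m}=\bigoplus_{i<m}A^i$, and $\bar\pi$ restricts to $\pi^i$ in degree $i$, the hypothesis on $\pi^0,\dots,\pi^{m-1}$ makes $\bar\pi$ bijective. Consequently $\bar\pi$ induces an isomorphism of complexes $\Omega_\bullet(B/B^{\geq m},m)\to\Omega_\bullet(A/A^{\geq m},m)$ (each component being a tensor product of maps $\pi^i$ with $i<m$), and therefore the induced map $\alpha\colon\Tor_{2,m}^{B/B^{\geq m}}(R,R)\to\Tor_{2,m}^{A/A^{\geq m}}(R,R)$ is an isomorphism as well.

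The second step is to check that the square
\begin{gather*}
\xymatrixcolsep{4pc}\xymatrix{
\Tor_{2,m}^{B/B^{\geq m}}(R,R) \ar[r]^-{g} \ar[d]_-{\alpha} & B^m \ar[d]^-{\pi^m} \\
\Tor_{2,m}^{A/A^{\geq m}}(R,R) \ar[r]^-{h} & A^m }
\end{gather*}
commutes. For this I would reopen the proof of Lemma~\ref{lemator}: the maps $g$ and $h$ are exactly the maps induced on homology by the multiplications $d_2^m\colon\bigoplus_{\bs{m}\in\mathscr{P}_2(m)}B^{\bs{m}}\to B^m$ and $d_2^m\colon\bigoplus_{\bs{m}\in\mathscr{P}_2(m)}A^{\bs{m}}\to A^m$, whereas $\alpha$ is induced on the degree-$2$ chains by $\bigoplus_{\bs{m}}(\pi^{m_1}\ot\pi^{m_2})$; commutativity of the square then amounts to nothing more than the fact that $\pi$ respects the multiplications of $B$ and $A$.

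Granting these two points, the conclusion is purely formal. The composite $\pi^m\circ g=h\circ\alpha$ is an isomorphism, being a composition of isomorphisms; since $g$ is surjective while $\pi^m\circ g$ is injective, $g$ must be injective, hence an isomorphism; consequently $\pi^m=(h\circ\alpha)\circ g^{-1}$ is an isomorphism, as claimed. (Incidentally, $g$ being injective also forces $\Tor_{2,m}^{B}(R,R)=0$.) I expect the only delicate point to be the commutativity of the square, i.e.\ the naturality of the exact sequence of Lemma~\ref{lemator} in the ring argument; this is not apparent from the statement and calls for a glance inside the proof, but it reduces to the evident naturality of the normalized bar complex and of multiplication, so it is not a genuine obstacle.
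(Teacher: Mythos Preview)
Your proposal is correct and follows essentially the same route as the paper: both set up the two short exact sequences of Lemma~\ref{lemator}, observe that the induced map on $\Tor_{2,m}$ of the truncated rings is an isomorphism because $\bar\pi$ is, and then conclude via a short diagram argument. The only cosmetic difference is that the paper invokes the Snake Lemma on the full three-column diagram, whereas you argue directly on the right-hand square; your version is in fact slightly more economical.
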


\begin{proof}
 By the relations \eqref{tor2} and \eqref{tor1}, it follows that $\pi$ induces the maps:
\begin{gather*}
 \widetilde{\pi}^m\colon \ \operatorname{Tor}_{2,m}^{B/B^{\geq m}}(R,R) \rightarrow \operatorname{Tor}_{2,m}^{A/A^{\geq m}}(R,R)_m, \qquad\overline{\pi}^m\colon \ \operatorname{Tor}^B_{2,m}(R.R) \rightarrow \operatorname{Tor}^A_{2,m}(R,R).
\end{gather*}
By construction, the squares of the following diagram are commutative,
\begin{gather*}
\xymatrixcolsep{2pc}\xymatrix{
0\ar[r]&\operatorname{Tor}^B_{2,m}(R,R) \ar[r]^-{i_B^m} \ar[d]^{\overline{\pi}^m} & \operatorname{Tor}_{2,m}^{B/B^{\geq m}}(R,R) \ar[r]^-{p_B^m} \ar[d]^{\widetilde{\pi}^m} & B^m \ar[r] \ar[d]^{\pi^m} & 0 \\
0\ar[r]&\operatorname{Tor}_{2,m}^A(R,R) \ar[r]_-{i_A^m} & \operatorname{Tor}_{2,m}^{A/A^{\geq m}}(R,R) \ar[r]_-{p_A^m} & A^m \ar[r] & 0.}
\end{gather*}
Since the homogeneous components of the graded $R$-rings $A/A^{\geq m}$ and $B/B^{\geq m}$ are zero in degree $i\geq m$, and $\pi^i$ is bijective for $i<m$, we deduce that $\pi$ induces an isomorphism $\Omega_\bullet(B/B^{\geq m})\cong\Omega_\bullet(A/A^{\geq m})$. In particular, we get that $\widetilde{\pi}^m$ is an isomorphism of $R$-bimodules. By the Snake lemma, the kernel of $\pi^m$ is isomorphic to $\Coker \ol{\pi}^m$. As $\Tor_{2,m}^A(R,R)=0$ it follows that $\pi^m$ is injective. Clearly, $\coker\pi^m=0$.
\end{proof}

To a connected graded $R$-ring $A$ we associate the $R$-ring $\langle A^1, K_A \rangle $, where $K_A$ denotes the kernel of $\m^{1,1} \colon A^1 \otimes A^1 \rightarrow A^2$. There is a unique $R$-ring morphism $\phi_A\colon \langle A^1,K_A\rangle\to A$ which lifts $\m^{1,1}$. This morphism is surjective, provided that $A$ is strongly graded. Following \cite{bgs}, we say that $A$ is \emph{quadratic} if and only if $A$ is a connected strongly graded $R$-ring such that $\phi_A$ is an isomorphism.

We now f\/ix a connected strongly graded $R$-ring $A$. Let $V=A^1$. The kernel of the canonical $R$-ring map $T_R^a(V)\to A$ will be denoted by $\widetilde{K}_A=\sum\limits_{m\in\mathbb{N}}\widetilde{K}_A^m$. Obviously, the $m$-degree component~$\widetilde{K}^m_A$ of~$\widetilde{K}_A$ coincides with the kernel of the iterated multiplication $\mu_m\colon V^{(m)}\to A^m$, and $\widetilde{K}_A^m\subseteq\gen{K_A}^m$.

Let $Z_{2,m}=\Ker d_2^m$ and $B_{2,m}=\im d_3^m$. Recall that $\{a_{\boldsymbol{m}}\}_{\boldsymbol{m}\in\mathscr{P}_2(m)}$ is a $2$-cycle if every $ a_{\boldsymbol{m}}$ is an element in $A^{\boldsymbol{m}}$ and $\sum\limits_{\boldsymbol{m}\in\mathscr{P}_2 (m)} \mu^{\boldsymbol{m}}(a_{\boldsymbol{m}})=0$. For any positive integer $m$ we set $\alpha_m(x):=\big\{ \mu_{\bs{m}}(x)\big\}_{\bs{m}\in \Pa{m}{2}}$. It is easy to see that $\alpha_m(x)$ belongs to $Z_{2,m}$, as $x$ is an element in the kernel of $\mu_m=\mu^{\bs{m}}\circ\mu_{\bs{m}}$, for any positive $2$-partition $\bs{m}$ of $m$. Thus $x\mapsto \alpha_m(x)$ def\/ines an $R$-bimodule map $\alpha_m\colon \gen{K_A}^m \to Z_{2,m}$.

Furthermore, for any element $x\in\widetilde{K}_A^m$, we have $\alpha_m(x)\in B_{2,m}$. Indeed, by def\/inition of~$\widetilde{K}_A^m$, it is enough to show that $\alpha_m(x)$ is a boundary for all $x\in V^{(i-1)}\ot K_A\ot V^{(m-i-1)}$ and $1\leq i\leq m-1$. Let $i=1$. Since $x\in K_A\ot V^{(m-2)}$, we have $\alpha_m(x)=\{x_{\bs{m}}\}_{\bs{m}\in\Pa{m}{2}}$, where
\begin{gather*}
x_{\bs{m}}=\begin{cases}
 \big(V\otimes \m(m-1)\big)(x), & \text{if} \ \bs{m}=(1,m-1), \\
 0, & \text{otherwise}.
 \end{cases}
\end{gather*}
Thus $\alpha_m(x)=-d_3^m(y)$, where $y=\big(V^{(2)}\ot\m(m-2)\big)(x)$. For all other values of $i$, the proof of the fact that $\alpha_m(x)$ is a boundary is similar, so it will be omitted.

Summarizing, $\alpha_m$ def\/ines a map from $\widetilde{K}_A^m$ to $B_{2,m}$, still denoted by $\alpha_m$. In other words, the diagram
\begin{gather*}
 \xymatrix{\widetilde{K}_A^m \ar[r]^-{\alpha_m} \ar[d] & B_{2,m} \ar[d] \\
 \ar[r]^-{\alpha_m} \gen{K_A}^m & Z_{2,m}
 }
%\label{H_2,n}
\end{gather*}
is commutative, where the vertical arrows denote the canonical inclusions.
\begin{Lemma}
 Let $A$ be a connected strongly graded $R$-ring. If $\widetilde{K}_A^m {=} \gen{K_A}^m\!$, then $\Tor_{2,m}^A(R,R){=}0$.
\end{Lemma}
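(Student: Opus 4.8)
The plan is to exploit the identification $\operatorname{Tor}^A_{2,m}(R,R)=Z_{2,m}/B_{2,m}$ from \eqref{tor2} and to show that \emph{every} $2$-cycle is a boundary. I would do this in two moves: first straighten an arbitrary $2$-cycle, modulo boundaries, into one concentrated in the slot $(1,m-1)$; then identify that residual cycle with $\alpha_m$ of an element of $\gen{K_A}^m$, which is a boundary by the discussion preceding the lemma. Write $V=A^1$; strong gradedness will be used throughout via the surjectivity of the maps $\mu^{p,q}$ and $\mu(k)\colon V^{(k)}\to A^k$.

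\emph{Step 1: straightening.} Let $z=\{a_{\bs m}\}_{\bs m\in\Pa{m}{2}}$ be a $2$-cycle. For each $p$ with $2\le p\le m-1$ the map $\mu^{1,p-1}\otimes\operatorname{Id}_{A^{m-p}}\colon A^{(1,p-1,m-p)}=A^1\otimes A^{p-1}\otimes A^{m-p}\to A^p\otimes A^{m-p}$ is surjective, so I can choose $w_p\in A^{(1,p-1,m-p)}$ (a summand of the degree-$3$ term of $\Omega_\bullet(A,m)$) with $(\mu^{1,p-1}\otimes\operatorname{Id})(w_p)=a_{(p,m-p)}$. The key point is that $d^m_3(w_p)$ is supported in exactly the two slots $(p,m-p)$ and $(1,m-1)$, its $(p,m-p)$-component being $a_{(p,m-p)}$; hence $z':=z-\sum_{p=2}^{m-1}d^m_3(w_p)$ is a $2$-cycle supported in the single slot $(1,m-1)$, say with component $a$. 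Since $z'$ is a cycle, $a\in\Ker\big(\mu^{1,m-1}\colon A^1\otimes A^{m-1}\to A^m\big)$, and $z-z'\in B_{2,m}$.

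\emph{Step 2: killing $z'$.} By strong gradedness $A^{m-1}=V^{(m-1)}/\widetilde K_A^{m-1}$ and $A^m=V^{(m)}/\widetilde K_A^m$, so $A^1\otimes A^{m-1}=V^{(m)}/\big(V\otimes\widetilde K_A^{m-1}\big)$ with $\mu^{1,m-1}$ the canonical projection; thus $\Ker\mu^{1,m-1}$ is the image of $\widetilde K_A^m$. This is where the hypothesis is used: $\widetilde K_A^m=\gen{K_A}^m=\big(K_A\otimes V^{(m-2)}\big)+\big(V\otimes\gen{K_A}^{m-1}\big)$, and the second summand lies in $V\otimes\widetilde K_A^{m-1}$, so $\Ker\mu^{1,m-1}$ is already the image of $K_A\otimes V^{(m-2)}$. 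Hence $a$ is the class of some $x\in K_A\otimes V^{(m-2)}\subseteq\gen{K_A}^m$. Now for $2\le p\le m-1$ one has $x\in\gen{K_A}^p\otimes V^{(m-p)}$ (viewing $V^{(m)}=V^{(p)}\otimes V^{(m-p)}$), and since $\mu_{(p,m-p)}=\mu(p)\otimes\mu(m-p)$ and $\mu(p)$ kills $\gen{K_A}^p\subseteq\widetilde K_A^p$, we get $\mu_{(p,m-p)}(x)=0$; while $\mu_{(1,m-1)}(x)=(\operatorname{Id}_V\otimes\mu(m-1))(x)=a$. Therefore $\alpha_m(x)=z'$. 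Since $x\in\gen{K_A}^m$, the discussion before the lemma gives $\alpha_m(x)\in B_{2,m}$ — concretely $\alpha_m(x)=-d^m_3\big((V^{(2)}\otimes\mu(m-2))(x)\big)$ — so $z'\in B_{2,m}$, and with Step 1 we get $z\in B_{2,m}$.

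As $z$ was arbitrary, $Z_{2,m}=B_{2,m}$, i.e.\ $\operatorname{Tor}^A_{2,m}(R,R)=0$. I expect the only delicate points to be the combinatorial bookkeeping in Step 1 — the partitions $(1,p-1,m-p)$ are chosen precisely so that the collateral of each elementary reduction lands in the \emph{same} common slot $(1,m-1)$, so that no cascade occurs and a single pass suffices — and, in Step 2, making the identification $\Ker\mu^{1,m-1}\cong\widetilde K_A^m/(V\otimes\widetilde K_A^{m-1})$ precise, since it is exactly through this identification that the hypothesis $\widetilde K_A^m=\gen{K_A}^m$ enters the argument.
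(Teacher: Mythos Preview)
Your proof is correct and follows essentially the same approach as the paper's: first reduce an arbitrary $2$-cycle modulo boundaries to one concentrated in the slot $(1,m-1)$, then lift it through the hypothesis $\widetilde{K}_A^m=\gen{K_A}^m$ to an element of $K_A\otimes V^{(m-2)}$ and invoke the discussion preceding the lemma to recognize it as $\alpha_m$ of that lift, hence a boundary. The only differences are cosmetic: the paper performs the straightening monomial-by-monomial via the identity $a_1\cdots a_i\otimes a_{i+1}\cdots a_m\equiv a_1\otimes a_2\cdots a_m\pmod{B_{2,m}}$ rather than your slot-by-slot lifting, and in Step~2 it lifts first and decomposes afterwards, whereas you use the identification $\Ker\mu^{1,m-1}\cong\widetilde{K}_A^m/(V\otimes\widetilde{K}_A^{m-1})$ to lift directly into $K_A\otimes V^{(m-2)}$.
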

\begin{proof}
 Let $\xi= a_1\cdots a_{i}\ot a_{i+1}\cdots a_{m}$, where $a_1,\dots,a_m$ are arbitrary elements in $ V=A^1$. Since
\begin{gather*}
\xi=a_1\cdots a_{i-1}\ot a_{i}\cdots a_{m}+d_3^m( a_1\cdots a_{i-1}\ot a_{i}\ot a_{i+1}\cdots a_{m}),
\end{gather*}
we deduce by induction on $i$ that $\xi=a_1\ot a_2\cdots a_m+d_3^m(\zeta)$, for some $\zeta\in \Od{3}{A,m}$.

We now pick a $2$-cycle $x\in Z_{2,m}$. Since $A$ is strongly graded, in view of the above remarks, there exist $y\in A^1\ot A^{m-1}$ and $z\in\Od{3}{A,m}$ such that $x=y+d_3^m(z)$. Note that $y$ is a $2$-cycle as well.

Our goal is to show that $x$ is a boundary. It is enough to prove that $y\in B_{2,m}$. As $A$ is strongly graded, the map $f= V\ot \mu_{m-1}$ is surjective. Hence, there exists $y'\in V^{(m)}$ such that $f(y')=y$. Since $y$ is a cycle, we also get that $y'$ belongs to $\gen{K_A}^m$, the kernel of $\mu_m$.

By assumption, $\gen{K_A}^m=\widetilde{K}_A^m$. Thus $y'=\sum\limits_{i=1}^{m-1}y'_i$, for some $y'_i\in V^{(i-1)}\ot K_A\ot V^{(m-i-1)}$. Note that $f(y'_i)=0$, for any $i>1$. Thus $y=f(y'_1)$. As $\mu_{\bs{m}}(y'_1)=0$, for any positive $2$-partition $\bs{m}\neq(1,m-1)$, it follows that $y=\alpha_m(y'_1)$. We conclude the proof that $y$ is a boundary by remarking that $\alpha_m$ maps $\widetilde{K}_A^m$ to $B_{2,m}$.
\end{proof}

\begin{Proposition} \label{prop:quadcoring} Let $A$ be a strongly graded $R$-ring. Then $A$ is quadratic if and only if $\operatorname{Tor}_{2,m}^A(R,R)$ vanishes for all $m \geq 3$.
\end{Proposition}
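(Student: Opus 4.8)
The plan is to characterize quadraticity of $A$ through the morphism $\phi_A\colon\langle A^1,K_A\rangle\to A$ and then translate the condition that $\phi_A$ be an isomorphism into the vanishing of $\operatorname{Tor}_{2,m}^A(R,R)$ for $m\geq 3$, using Proposition~\ref{proptor} and the last Lemma together with its converse. Set $B=\langle A^1,K_A\rangle$. Since $A$ is strongly graded, $\phi_A$ is surjective, $B$ is strongly graded, and the components $\phi_A^0$ and $\phi_A^1$ are the identity of $R$ and of $A^1$ respectively; moreover $\phi_A^2$ is an isomorphism, because $B^2=(A^1\ot A^1)/K_A$ by construction and $\phi_A^2$ is exactly the map induced by $\mu^{1,1}$, whose kernel is $K_A$. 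Thus $A$ is quadratic precisely when $\phi_A^m$ is bijective for all $m\geq 3$.

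First I would run the induction using Proposition~\ref{proptor}. Suppose $\operatorname{Tor}_{2,m}^A(R,R)=0$ for all $m\geq 3$. We know $\phi_A^i$ is bijective for $i=0,1,2$. Assume $\phi_A^i$ is bijective for all $i<m$ with $m\geq 3$; since $\operatorname{Tor}_{2,m}^A(R,R)=0$, Proposition~\ref{proptor} applied to $\pi=\phi_A$ gives that $\phi_A^m$ is bijective. Hence by induction $\phi_A$ is an isomorphism and $A$ is quadratic. This is the easy direction.

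For the converse, suppose $A$ is quadratic, i.e.\ $\phi_A$ is an isomorphism, and fix $m\geq 3$. The natural candidate is to feed this into the last Lemma: it suffices to show $\widetilde{K}_A^m=\langle K_A\rangle^m$, where $\widetilde{K}_A^m=\Ker\mu_m$ is the degree-$m$ part of the kernel of $T_R^a(A^1)\to A$. But $\langle K_A\rangle$ is exactly the kernel of $T_R^a(A^1)\to B=\langle A^1,K_A\rangle$, and $\widetilde K_A$ is the kernel of $T_R^a(A^1)\to A$; since $\phi_A$ is an isomorphism, the two composites $T_R^a(A^1)\to B\to A$ and $T_R^a(A^1)\to A$ coincide and $B\to A$ is injective, so $\langle K_A\rangle=\widetilde K_A$ and in particular $\langle K_A\rangle^m=\widetilde K_A^m$. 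The last Lemma then yields $\operatorname{Tor}_{2,m}^A(R,R)=0$ for all $m\geq 3$, as required.

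The main obstacle is making the bookkeeping around degrees $0,1,2$ airtight: one must be sure that the hypotheses of Proposition~\ref{proptor} are literally satisfied at the base of the induction (in particular that $\phi_A^2$ is bijective, which uses nothing beyond the defining presentation of $B^2$), and that in the converse direction the identification $\langle K_A\rangle=\widetilde K_A$ really is an equality of graded sub-bimodules of $T_R^a(A^1)$ and not merely an equality in degrees $\geq 3$; both points are short but need to be stated. Everything else is a direct invocation of the two lemmas above and Proposition~\ref{proptor}.
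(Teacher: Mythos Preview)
Your proposal is correct and follows essentially the same route as the paper: one direction by induction on $m$ via Proposition~\ref{proptor}, the other by feeding the equality $\widetilde{K}_A^m=\langle K_A\rangle^m$ (which is exactly the meaning of quadraticity) into the preceding Lemma. If anything, you are slightly more careful than the published argument in that you explicitly verify $\phi_A^2$ is an isomorphism before starting the induction at $m\geq 3$, whereas the paper leaves this implicit.
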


\begin{proof}
Let $B:=\langle A^1,K_A\rangle$. We f\/irst prove that $A$ is a quadratic $R$-ring, provided that $\operatorname{Tor}_{2,m}^A(R,R)=0$, for all $m \geq 3$. It is enough to show that the components $\phi_A^m$ of the canonical map $\phi_A\colon B \rightarrow A$ are all isomorphisms. We proceed by induction on $m$. The maps $\phi_A^0$ and $\phi^1_A$ are obviously injective, because they coincide with the identity maps of $R$ and $A^1$, respectively. Let us assume that $\phi^k_A$ is an isomorphism, for all $k \leq m-1$, where $m\geq 3$. By assumption, $\operatorname{Tor}_{2,m}^A(R,R)=0$ for all $m\geq 3$. Hence, using Proposition \ref{proptor}, it follows that $\phi_A^m$ is bijective.

The converse follows directly by the preceding lemma, as $\widetilde{K}_A^m = \gen{K_A}^m$, for all $m$, by the def\/inition of quadratic $R$-rings.
\end{proof}

\section[Koszul $R$-corings]{Koszul $\boldsymbol{R}$-corings}\label{section4}

In this section we introduce and study the properties of Koszul corings, the dual notion of Koszul $R$-rings. Here we shall also show that any Koszul coring is quadratic.

Strongly graded comodules, the substitute for strongly graded rings when one works with graded corings instead of graded rings, will play an important role in this part of the paper. Hence, we start by discussing their main properties.

Let $(X,\rho^X)$ be a graded right comodule over a graded $R$-coring $C$. For every $n$, the map~$\rho^X$ def\/ines a morphism of graded $C$-comodules $\rho_n^X$ from $X$ to $X_n\ot C$, whose component of degree~$k$ is~$\rho_{n,k-n}^X$. Note that the component of degree $k$ of $X_n\ot C$ is $X_n\ot C_{k-n}$, where $C_i=0$, for all $i<0$. Thus, in particular, $\rho_{n,k-n}^X=0$ for $k<n$.

\begin{Definition}\label{de:strongly_graded_n}
Let $X=\oplus_{n\geq 0} X_n$ be a graded right $C$-comodule, with structure map $\rho^X$. We say that $X$ is \emph{strongly graded in degree} $n$ if the morphism of graded $C$-comodules $\rho^X_{n} \colon X \rightarrow X_n \otimes C$ is injective.
\end{Definition}

Let us remark that, by def\/inition, a graded right $C$-comodule is strongly graded in degree $n$ if and only if $X_p=0$ for all $p<n$ and $\rho_{n,q}\colon X_{n+q}\to X_n\ot C_q$ is injective for all $q\geq 0$.

Some useful properties of graded comodules are collected in the following lemma.

\begin{Lemma}\label{le:strongly_graded_n}
Let $C$ be a strongly graded $R$-coring. Let $(X,\rho^X)$ denote a graded right $C$-comodule.
\begin{enumerate}\itemsep=0pt
 \item[$1.$] The inclusion $\Ker\rho^X_{i,m-i}\subseteq \Ker\rho^X_{j,m-j}$ holds for all $0\leq j\leq i < m$.

 \item[$2.$] There is a canonical isomorphism $\Hom^C(R,X)_m\cong\bigcap_{i=0}^{m-1}\Ker\rho^X_{i,m-i}=\Ker\rho^X_{m-1,1}$.

 \item[$3.$] $X$ is strongly graded in degree $n$ if and only if $X_i=0$, for $i<n$, and $\rho_{i,1}^X$ is injective for $i\geq n$.

 \item[$4.$] Let $f\colon X\to Y$ be a morphism of graded right $C$-comodules. If $X$ is strongly graded in degree $n$ and $f_n\colon X_n\to Y_n$ is an injective map then $f$ is injective as well.
 \end{enumerate}
\end{Lemma}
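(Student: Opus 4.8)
The plan is to treat the four statements in order, as each feeds into the next. For part~1, the key point is coassociativity of the comodule structure map. Writing $\rho^X$ in Sweedler-type notation, the composite $\rho^X_{i,m-i}$ factors through $\rho^X_{j,m-j}$ followed by the map $X_j \ot \Delta_{i-j,m-i}\colon X_j\ot C_{m-j}\to X_j\ot C_{i-j}\ot C_{m-i}$, together with the identification $X_j\ot C_{i-j}\cong X_i$ under $\rho^X_{j,i-j}$ — more precisely, one uses that $\rho^X_{i,m-i}$ equals $(\rho^X_{j,i-j}\ot\Id_{C_{m-i}})\circ(\text{something})$ composed with $\rho^X_{j,m-j}$. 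Since $C$ is strongly graded, the relevant $\Delta$-components are injective, so anything killed by $\rho^X_{j,m-j}$ is killed by $\rho^X_{i,m-i}$; reading the inclusion in the stated direction then gives $\Ker\rho^X_{i,m-i}\subseteq\Ker\rho^X_{j,m-j}$ for $j\le i$. I expect this bookkeeping with the indices — keeping straight which factor of $C$ gets split and in which order — to be the main technical obstacle of the whole lemma; everything else is comparatively formal.

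For part~2, the module $\Hom^C(R,X)_m$ is by definition the degree-$m$ part of the $C$-coinvariants of $X$, which consists of elements annihilated by the reduced coaction, i.e.\ lying in $\bigcap_{i=0}^{m-1}\Ker\rho^X_{i,m-i}$ (an element $x\in X_m$ is coinvariant precisely when $\rho^X(x)=x\ot 1$, and the non-trivial components of $\rho^X(x)-x\ot 1$ are exactly the $\rho^X_{i,m-i}(x)$ for $0\le i<m$). By part~1 the chain of kernels is increasing as $i$ grows toward $m-1$, so the intersection collapses to its last term $\Ker\rho^X_{m-1,1}$. Part~3 is then immediate: by the remark following Definition~\ref{de:strongly_graded_n}, $X$ strongly graded in degree $n$ means $X_i=0$ for $i<n$ and $\rho^X_{n,q}$ injective for all $q\ge 0$; but by part~1 (with $m=n+q$, $j=n$, $i=n+q-1$) injectivity of $\rho^X_{n+q-1,1}$ forces injectivity of $\rho^X_{n,q}$, so it suffices to check the maps $\rho^X_{i,1}$ for $i\ge n$, and conversely these are a special case.

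For part~4, suppose $X$ is strongly graded in degree $n$, $f_n$ is injective, and let $x\in\Ker f$, say $x\in X_{m}$ with $x\ne 0$; by part~3 we may assume $m\ge n$ and argue by induction on $m$. The base case $m=n$ is the hypothesis on $f_n$. For $m>n$, naturality of the coaction gives a commuting square relating $\rho^X_{n,m-n}$ and $\rho^Y_{n,m-n}$ via $f_n\ot f$ (or via $f_n\ot\Id_C$ on the target coring), so $\rho^Y_{n,m-n}(f(x))=(f_n\ot f_{m-n})(\rho^X_{n,m-n}(x))$; actually it is cleaner to use $\rho^X_{m-1,1}$ and $\rho^Y_{m-1,1}$, since by part~3 the former is injective. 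Then $f(x)=0$ implies $(f_{m-1}\ot\Id_{C_1})(\rho^X_{m-1,1}(x))=0$, and since $\rho^X_{m-1,1}(x)\in X_{m-1}\ot C_1$ with $f_{m-1}$ injective by the inductive hypothesis, we get $\rho^X_{m-1,1}(x)=0$; but $\rho^X_{m-1,1}$ is injective, so $x=0$, a contradiction. This completes the induction and the proof.
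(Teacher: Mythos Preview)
Your argument has a recurring directional error in part~1 that becomes a genuine gap in part~3. Coassociativity gives the square
\[
(\rho^X_{j,i-j}\ot\Id_{C_{m-i}})\circ\rho^X_{i,m-i}
=(\Id_{X_j}\ot\Delta_{i-j,m-i})\circ\rho^X_{j,m-j},
\]
and strong grading of $C$ makes $\Id_{X_j}\ot\Delta_{i-j,m-i}$ injective. From this, if $\rho^X_{i,m-i}(x)=0$ then the right-hand side vanishes on $x$, hence $\rho^X_{j,m-j}(x)=0$; that is the stated inclusion $\Ker\rho^X_{i,m-i}\subseteq\Ker\rho^X_{j,m-j}$. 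Your text instead asserts the opposite (``anything killed by $\rho^X_{j,m-j}$ is killed by $\rho^X_{i,m-i}$'') and then ``reads the inclusion in the stated direction'', which is not a deduction. Note also that $\rho^X_{i,m-i}$ does \emph{not} factor through $\rho^X_{j,m-j}$; the two maps only agree after composing into the common target $X_j\ot C_{i-j}\ot C_{m-i}$.

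This reversal is what breaks part~3. Correctly read, part~1 says the kernels \emph{shrink} as $i$ increases, so it yields only the easy implication: strongly graded in degree $n$ forces each $\rho^X_{i,1}$ injective. Your claimed converse, that injectivity of $\rho^X_{n+q-1,1}$ forces injectivity of $\rho^X_{n,q}$, does not follow from part~1 and needs a separate argument. The paper supplies it by induction on $q$, via the square
\[
(\Id_{X_n}\ot\Delta_{q,1})\circ\rho^X_{n,q+1}
=(\rho^X_{n,q}\ot\Id_{C_1})\circ\rho^X_{n+q,1},
\]
whose right-hand side is injective by the inductive hypothesis and the assumption on $\rho^X_{n+q,1}$, so $\rho^X_{n,q+1}$ is injective. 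Your parts~2 and~4 are fine once part~1 is oriented correctly (in part~2 the chain of kernels is \emph{decreasing} in $i$, but the conclusion is right); for part~4 the paper gives the direct, non-inductive version using $\rho^X_{n,k-n}$ and $f_n\ot\Id_{C_{k-n}}$, while your inductive variant via $\rho^X_{m-1,1}$ also works.
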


 \begin{proof}
 To prove (1), consider the following commutative diagram:
 \begin{gather*}
 \xymatrixcolsep{5pc}\xymatrix{ X_m \ar[r]^-{\rho_{i,m-i}^X} \ar[d]_-{\rho_{j,m-j}^X} & X_i \ot C_{m-i} \ar[d]^-{\rho_{j,i-j}^X} \\
X_j \ot C_{m-j} \ar[r]_-{\operatorname{I}_{X_j} \otimes \Delta^C_{i-j,m-i}} & X_j \ot C_{i-j} \ot C_{m-i}.}
 \end{gather*}
Let $x \in \operatorname{Ker}\rho_{i,m-i}^X$. Using the fact that $\operatorname{I}_{X_j} \ot \Delta^C_{i-j,m-i}$ is injective, as $C$ is strongly graded, it follows that $\rho^X_{j,m-j}(x)$ is zero, hence $x \in \operatorname{Ker}\rho_{j,m-j}^X$, as required.

(2) Let $X$ and $Y$ denote two graded $C$-comodule. By def\/inition, $f\in \Hom^C(X,Y)_m$ if and only if $f$ is a morphism of $C$-comodules and $f(X_k)\subseteq Y_{m+k}$, for all $k$. In particular, for a~$C$-colinear map $f\colon R\to X$ of degree $m$, we have $f(1)\in X_m$ and $\rho^X(f(1))=f(1)\ot 1$. The latter relation is equivalent to the equations $\rho^X_{i,m-i}(f(1))=0$, for all $0\leq i\leq m-1$.
In conclusion, the required isomorphism is given by the map $f\mapsto f(1)$. The equation from the statement follows by the f\/irst part of the lemma.

(3) In view of the remark just after the Def\/inition \ref{de:strongly_graded_n}, the condition $X_i=0$ for $i<n$ is part of the hypothesis for both implications. We must prove that $\rho^X_{i,1}$ is injective for all $i\geq n$ if and only if $\rho^X_{n,q} \colon X_{n+q} \to X_n \ot C_q$ is injective for all $q\in\N$.

To prove that $\rho_{n,q}^X$ is injective for $q\in\N$ we proceed by induction on $q$. The map $\rho_{n,0}^M$ coincides with the canonical isomorphism $M_n\cong M_n\ot R$ and $\rho_{n,1}^M$ is injective by assumption. Let us assume that $\rho^X_{n,q}$ is injective. Thus, the horizontal arrow on the bottom of the following commutative diagram is injective,
\begin{gather*}
 \xymatrixcolsep{5pc} \xymatrix{ X_{n+q+1} \ar[r]^-{\rho_{n,q+1}^X} \ar[d]_{\rho_{n+q,1}^X} & X_n \ot C_{q+1} \ar[d]^-{\operatorname{I}_{X_n} \ot \Delta^C_{q,1}} \\
X_{n+q} \ot C_1 \ar[r]_-{\rho^X_{n,q} \ot \operatorname{I}_{C_1}} & X_n \ot C_q \ot C_1.}
\end{gather*}
Since, by hypothesis, the leftmost vertical map is injective it follows that $\rho^X_{n,q+1}$ is also injective.

For the converse one uses the commutative diagram
\begin{gather*}
 \xymatrixcolsep{5pc}\xymatrix{ X_{i+1} \ar[r]^-{\rho^X_{i,1}} \ar[d]_-{\rho^X_{n,i+1-n}} & X_{i} \ot C_1 \ar[d]^-{\rho^X_{n,i-n} \ot \operatorname{I}_{C_1}} \\
X_ n\ot C_{i+1-n} \ar[r]_-{\operatorname{I}_{X_n} \ot \Delta^C_{i-n,1}} & X_n \ot C_{i-n} \ot C_1.}
\end{gather*}
The component $\rho^X_{n,i+1-n}$ is injective by assumption, while $\operatorname{I}_{X_n}\ot\Delta_{i-n,1}^C$ is so as $C$ is strongly graded. We conclude that $\rho^X_{i,1}$ is injective, as required.

(4) We have to prove that all components $f_k$ are injective. For $k<n$ this property trivially holds as, by preceding part of the lemma, we have $X_k=0$. The map $f_n$ is injective by assumption, so we can suppose that $k>n$. The following diagram is commutative, as $f$ is a morphism of graded comodules,
\begin{gather*}
 \xymatrixcolsep{5pc}\xymatrix{ X_{k} \ar[r]^-{f_{k}} \ar[d]_-{\rho^X_{n,k-n}} & Y_{k} \ar[d]^-{\rho^Y_{n,k-n}} \\
X_n \ot C_{k-n} \ar[r]_-{f_n \ot \operatorname{I}_{C_{k-n}}} & Y_n \ot C_{k-n}.}
\end{gather*}
By hypothesis, $f_n\ot\Id_{C_{k-n}}$ is injective. On the other hand, $\rho_{n,k-n}^X$ is injective, as $X$ is strongly graded in degree $n$. We conclude that $x=0$.
 \end{proof}

We can now introduce Koszul corings by dualizing \cite[Def\/inition 1.2.1]{bgs}. Several characterizations of this class of graded corings are given in Theorem \ref{thm:kcoring}.

\begin{Definition} \label{def:kcor}
A connected $R$-coring $C$ is called \emph{Koszul} if $R$ has an resolution $0 \rightarrow R \rightarrow Q^\bullet$ by injective graded right $C$-comodules such that every term $Q^n$ is strongly graded in degree~$n$.
\end{Definition}

\begin{Theorem} \label{thm:kcoring}
Let $C$ be a connected, strongly graded $R$-coring. The following are equivalent:
\begin{enumerate}\itemsep=0pt
\item[$(1)$] the coring $C$ is Koszul;
\item[$(2)$] the pair $(E(C),C)$ is Koszul;
\item[$(3)$] the pair $(C^!,C)$ is Koszul;
\item[$(4)$] the canonical morphism $E(C)\rar C^!$ is an isomorphism;
\item[$(5)$] the $R$-ring $E(C)$ is strongly graded;
\item[$(6)$] the canonical map $QE(C)\to E^1(C)$ is an isomorphism;
\item[$(7)$] the relation $E^{n,m}(C)=0$ holds for all $n \neq m$.
\end{enumerate}
\end{Theorem}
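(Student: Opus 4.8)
The whole statement is the coring-theoretic mirror of Theorem~\ref{t-k7} (i.e.\ \cite[Theorem~1.10]{mst}), so the plan is to transcribe that proof, systematically replacing ``strongly graded ring/module'' by ``strongly graded ring/\emph{comodule}'' and using in their stead Lemma~\ref{lema:ring}, applied to the $R$-ring $E(C)$, together with Lemma~\ref{le:strongly_graded_n}. Throughout one keeps in mind the almost-Koszul pairs $(E(C),C)$ and $(C^!,C)$ and the canonical morphism $(\phi_C,\Id_C)\colon (E(C),C)\to (C^!,C)$ of Section~\ref{fa:exemples_morphisms}.

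I would first dispatch the soft implications. For $(5)\Leftrightarrow(6)$, apply Lemma~\ref{lema:ring}(1) to $A=E(C)$: the canonical map $QE(C)\to E^1(C)$ is always a split epimorphism (the class map $E^1(C)\to QE(C)$ is a section), so it is an isomorphism exactly when it is injective, i.e.\ exactly when $E(C)$ is strongly graded. For $(4)\Rightarrow(5)$, note that $C^!=\langle C_1,\im\Delta_{1,1}\rangle$ is strongly graded by construction, hence so is $E(C)\cong C^!$; moreover $\phi_C$ then identifies the pair $(E(C),C)$ with $(C^!,C)$, so $(2)\Leftrightarrow(3)$ as well. For $(2)\Rightarrow(1)$ and $(3)\Rightarrow(1)$: if either pair is Koszul, then $\operatorname{K}^\bullet_r$ of it is an injective resolution of $R$ by graded right $C$-comodules whose $n$-th term is the cofree comodule $A^n\ot C$; since $(A^n\ot C)_p=A^n\ot C_{p-n}$ vanishes for $p<n$ and $\rho^{A^n\ot C}_{n,q}=\operatorname{I}_{A^n}\ot\Delta^C_{0,q}$ is an isomorphism, each such term is strongly graded in degree $n$, whence $C$ is Koszul by Definition~\ref{def:kcor}. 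The remaining relations among $(2)$, $(3)$, $(4)$ --- in particular, that for a Koszul pair the ring component is isomorphic to the quadratic dual of its coring component, so that $(2)$ (or $(3)$) forces $\phi_C$ to be an isomorphism --- I would import from the Koszul-pair calculus of \cite{jps}, exactly as in \cite{mst}. Finally, for $(5)\Rightarrow(7)$, regard $E(C)=\op_{n,m}E^{n,m}(C)$ as a bigraded $R$-ring, so that $\gr E(C)$ is the $\N$-graded ring $\op_n E^n(C)$ and $(5)$ says precisely that $\gr E(C)$ is strongly graded; one reads off from the subcomplexes $\Omega^\bullet(C,m)$ that $E^{0,m}(C)=0$ for $m\neq 0$ and, using that $C$ is strongly graded (so every $\Delta_{a,b}$ is injective), that $E^{1,m}(C)=0$ for $m\neq 1$; now Lemma~\ref{lema:ring}(3) gives $E^{n,m}(C)=0$ for all $n\geq 2$, $m\neq n$.

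What remains are $(1)\Rightarrow(7)$ and the closing of the cycle, $(7)\Rightarrow(1)$ together with $(1)\Rightarrow(2)$, which carry the real content. For $(1)\Rightarrow(7)$, fix a resolution $0\to R\to Q^\bullet$ as in Definition~\ref{def:kcor} and compute $E(C)$ from it: by Lemma~\ref{le:strongly_graded_n}(2),(3), $\Hom^C(R,Q^n)_m=\Ker\rho^{Q^n}_{m-1,1}$, which is $0$ for $m<n$ (as $Q^n_m=0$) and for $m>n$ (as $\rho^{Q^n}_{m-1,1}$ is then injective) and equals $Q^n_n$ for $m=n$; thus $\Hom^C(R,Q^\bullet)$ has its $n$-th term concentrated in internal degree $n$, so comparing internal degrees forces all its differentials to vanish, and $E^n(C)=Q^n_n$, $E^{n,m}(C)=0$ for $n\neq m$. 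For $(7)\Rightarrow(1)$ and $(1)\Rightarrow(2)$, I would build the minimal graded-injective resolution $0\to R\to Q^\bullet$ of $R$ over $C$, observe that $\Hom^C(R,-)$ applied to it has zero differentials (so its $n$-th term is $E^n(C)$), and use $(7)$ --- respectively the linearity coming directly from $(1)$ --- to show that $Q^n$ is strongly graded in degree $n$. Being an injective graded comodule that is strongly graded in degree $n$, $Q^n$ must then be cofree and cogenerated in degree $n$, so $Q^n\cong Q^n_n\ot C\cong E^n(C)\ot C$; identifying the resulting complex with $\operatorname{K}^\bullet_r(E(C),C)$ --- a colinear differential $E^n(C)\ot C\to E^{n+1}(C)\ot C$ is determined by its component $E^n(C)\ot C_1\to E^{n+1}(C)$, which one checks to be $a\ot c\mapsto a\cdot\theta_{C,A}(c)$, the bottom component of $d^n_r$ --- then yields $(2)$. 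I expect the main obstacle to be precisely this comodule bookkeeping: proving that injective-plus-strongly-graded-in-degree-$n$ forces cofreeness (a short argument via the splitting of the cofree envelope $Q^n\hookrightarrow Q^n_n\ot C$) and then matching the differentials of the minimal resolution with those of $\operatorname{K}^\bullet_r(E(C),C)$; the rest is a routine dualization of the proof of \cite[Theorem~1.10]{mst}.
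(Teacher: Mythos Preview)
Your soft implications and $(1)\Rightarrow(7)$ are essentially the paper's. The divergence is in the hard direction. The paper never argues $(1)\Rightarrow(2)$ directly; it closes the cycle via $(7)\Rightarrow(3)$, proved by induction on $n$ that $\K^\bullet_r(C^!,C)$ is exact in degree~$n$. The inductive step sets $Y=\Coker d_r^{n-1}$, uses a dimension shift $\Ext^{1,m}_C(R,\im d_r^{n-1})\cong\Ext^{n+1,m}_C(R,R)$ together with the hypothesis $(7)$ to obtain $\Hom^C(R,Y)_m=0$ for $m>n+1$, whence $Y$ is strongly graded in degree $n+1$ by Lemma~\ref{le:strongly_graded_n}; then Lemma~\ref{le:strongly_graded_n}(4) reduces injectivity of the induced map $Y\to C^!_{n+1}\ot C$ to a degree-$(n{+}1)$ check carried out by hand from the presentation of $C^!$. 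After that, $(3)\Rightarrow(4)$ is the comparison theorem for injective resolutions and $(3)\Rightarrow(2)$ follows because $\phi_C$ is then an isomorphism. No minimal resolutions or Yoneda-product identifications appear.

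Your minimal-resolution route to $(7)\Rightarrow(1)$ can be made to work once you supply the existence of minimal graded-injective resolutions over $C$ (with the defining property that $\Hom^C(R,-)$ annihilates the differentials) and observe that vanishing of the graded socle below degree $n$ forces $Q^n_p=0$ for $p<n$. The real gap is in your $(1)\Rightarrow(2)$: after identifying $Q^n\cong E^n(C)\ot C$, the differential of the given resolution becomes \emph{some} colinear map, determined by a morphism $E^n(C)\ot C\to E^{n+1}(C)$, and nothing you have written forces its restriction $E^n(C)\ot C_1\to E^{n+1}(C)$ to coincide with the multiplication of $E(C)$. The ``one checks'' is exactly the assertion that the Yoneda product on $\Ext^\bullet_C(R,R)$ is computed by the differentials of the minimal resolution under the socle identification $E^n(C)\cong Q^n_n$; this is true, but it needs its own argument (lifting cocycles along the resolution and tracking the cup product), and without it you have only produced an exact complex with the right terms, not $\K^\bullet_r(E(C),C)$ itself. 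The paper's direct inductive exactness proof for $\K^\bullet_r(C^!,C)$ sidesteps both the construction of minimal resolutions and this Yoneda bookkeeping; if you prefer your approach, you will have to supply them.
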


\begin{proof}
Note that, for any Koszul pair $(A,C)$, the complex $\K^\bullet_r(A,C)$ is a resolution of $R$ sa\-tisfying the conditions from the def\/inition of Koszul corings. In particular it follows that $C$ is Koszul, so the implications $(2) \Longrightarrow (1)$ and $(3) \Longrightarrow (1)$ are obvious.

To prove $(1) \Longrightarrow (7)$, assume that $C$ is Koszul. That is, following the def\/inition, $R$ has an injective resolution $0 \rightarrow R \rightarrow Q^\bullet$ such that $Q^n$ is strongly graded in degree $n$. Note that $E^{n,m}(C)=\operatorname{Ext}_{n,m}^C(R,R)$ is the cohomology in degree $n$ of the complex obtained by applying the functor $\operatorname{Hom}^C(R,-)_m$ to this resolution. Thus, to conclude the proof of this implication, it is enough to show that $\operatorname{Hom}^C(R,Q^n)_m$ is trivial for $m\neq n$. By Lemma \ref{le:strongly_graded_n}(2) we have $\operatorname{Hom}^C(R,Q^n)_m=\operatorname{Ker}\rho^{Q^n}_{m-1,1}$. Thus, the claim follows from the fact that $Q^n$ is strongly graded since, in view of Lemma \ref{le:strongly_graded_n}, we have $Q^{n,m}=0$ for $m<n$, and the component $\rho^{Q^n}_{m-1,1}$ is injective for all $m>n$.

For $(7) \Longrightarrow (3)$, let us assume that $E(C)$ is diagonal. We shall prove by induction on $n$ that $\K_r^\bullet(C^!,C) $ is exact in degree $n$. The sequence
\begin{gather*}
 0\longrightarrow R\longrightarrow C_0^!\ot C\longrightarrow
 C_1^!\ot C
\end{gather*}
is exact by the def\/inition of $\K^\bullet(C^!,C)$ and the fact that $C$ is strongly graded, cf.\ \cite[Lemma~2.1]{jps}.

We now assume that $\K^\bullet(C^!,C)$ is exact in degree $i$, for all $0\leq i\leq n-1$, and prove that it is exact in degree $n$ as well. We consider the exact sequence
\begin{gather}\label{eq:sequenceC!}
 0 \rightarrow R \rightarrow C^!_0\ot C \xrightarrow{d^0_r} C_1^! \otimes C \xrightarrow{d^1_r} \cdots \xrightarrow{d^{n-2}_r} C^!_{n-1}\ot C \xrightarrow{d^{n-1}_r} C_n^! \otimes C \xrightarrow{\al}Y \rightarrow 0,
\end{gather}
where $Y:=\Coker d^{n-1}_r$ and $\al$ denotes the canonical projection. We claim that $Y$ is strongly graded in degree $n+1$. Since $d_r^{n-1}(\widehat{x}\ot c)= \widehat{x\ot c}\ot 1$, for all $x\in C_1^{n-1}$ and $c\in C_1$, the homogeneous component $(C_n^!\ot C)_n$ is included into the image of $d_r^{n-1}$. Thus $Y$ has no non-zero elements of degree $n$. On the other hand, if $Y_k$ denotes $k$-degree component of $Y$, then $Y_k=0$ for all $k<n$, as $Y$ is the quotient of $C_n^!\ot C$ by a graded subcomodule. In conclusion, for proving our claim, it remains to show that $\rho^Y_{m-1,1}$ is injective for all $m>n+1$.

Let $X:=\Ker \al=\im d^{n-1}_r$. We complete $0 \rightarrow {X} \rightarrow C_n^! \otimes C$ to a resolution
\begin{gather*}
 0 \rightarrow X\rightarrow C_n^!\ot C \rightarrow Q^{n+1} \rightarrow Q^{n+2} \rightarrow \cdots
\end{gather*}
by injective graded right $C$-comodules, which combined with \eqref{eq:sequenceC!} yields us a new injective resolution
\begin{gather*} %\label{resinj}
0 \rightarrow R \rightarrow C_0^!\ot C \rightarrow C_1^!\ot C\rightarrow \dots \rightarrow C_n^! \otimes C \rightarrow Q^{n+1} \rightarrow Q^{n+2} \rightarrow \cdots.
\end{gather*}
Thus we can compute $\operatorname{Ext}_C^{n+1,m}(R,R)$ as the cohomology in degree $n+1$ of the complex:
\begin{gather*}
 0 \rightarrow \operatorname{Hom}^C(R,C_0^!\ot C)_m \rightarrow \operatorname{Hom}^C(R,C_1^!\ot C)_m \rightarrow \dots \rightarrow \operatorname{Hom}^C(R, C_n^!\ot C)_m\\
 \hphantom{0}{} \rightarrow \operatorname{Hom}^C\big(R,Q^{n+1}\big)_m \rightarrow \cdots.
\end{gather*}
Since $\operatorname{Ext}^{1,m}_C(R,X)$ is the $1$-st cohomology group of the complex
\begin{gather*}
 0 \rightarrow \operatorname{Hom}^C(R,C_n^! \otimes C)_m \rightarrow \operatorname{Hom}^C(R,Q^{n+1})_m \rightarrow \operatorname{Hom}^C\big(R,Q^{n+2}\big)_m \rightarrow \cdots
\end{gather*}
we deduce that $\operatorname{Ext}^{1,m}_C(R,X)\cong\operatorname{Ext}_C^{n+1,m}(R,R)$. Recall that $X$ is the kernel of $\al$, so the long exact sequence connecting the functors of $\Ext^{\bullet,m}_C(R,-)_m$ can be written as follows
\begin{gather*}
 0 \rightarrow \operatorname{Hom}^C(R,X)_m \rightarrow \operatorname{Hom}^C(R, C_n^! \otimes C)_m \rightarrow \operatorname{Hom}^C(R,Y)_m \rightarrow \operatorname{Ext}^{1,m}_C(R,X) \rightarrow \cdots.
\end{gather*}
From the standing assumption, $E(C)$ is diagonal, so using the above isomorphism between the Ext-groups, it follows that $\operatorname{Ext}_C^{1,m}(R,X)=0$, for all $m>n+1$. Moreover, $\operatorname{Hom}^C(R,C_n^! \otimes C)$ is isomorphic to $\operatorname{Hom}_R(R,C_n^! )$ as graded $R$-modules, where the latter module is concentrated in degree~$n$. It follows that $\operatorname{Hom}^C(R,Y)_m=0$, for all $m > n+1$. By Lemma~\ref{le:strongly_graded_n}(2) it follows that $\rho^Y_{m-1,1}$ is injective for all $m>n+1$. Since we already know that $Y_k=0$ for $k<n+1$, we conclude by Lemma~\ref{le:strongly_graded_n}(3) that $Y$ is strongly graded in degree $n+1$, that is our claim has been proved.

We can now prove that $\K^\bullet(C^!,C)$ is exact in degree $n$. We must show that the kernel of the map $\pa\colon Y\to C^!_{n+1}\ot C$ induced by $d^n_r$ is trivial. In view of Lemma~\ref{le:strongly_graded_n}(4), as $Y$ is strongly graded in $n+1$, it is enough to prove that the component $\pa_{n+1}\colon Y_{n+1}\to C_{n+1}\ot C_0$ of $\pa$ is injective.

Let $y$ be an element in the kernel of $\pa_{n+1}$. Hence there are ${y}_1, \dots,{y}_p\in C^{(n)}$ and $c_1,\dots, c_p\in C_1$ such that $y$ is the equivalence class of $\sum\limits_{i=1}^p \widehat{y}_i\ot c_i$ in $Y_{n+1}=(C_n^!\ot C_1)/d_r^{n-1}(C_{n-1}^!\ot C_2)$. Here $\widehat{y}_i$ denotes the class of $y_i$ in $C^!_n=C_1^{(n)}/W_n$, where $W_n=\sum\limits_{k=1}^{n-1} C_1^{(k-1)}\ot\im\Delta_{1,1}\ot C_1^{(n-k-1)}$. Since $\pa_{n+1}(y)=0$ and using the fact that $d_r^n$ is induced by the multiplication in $C^!$, it follows that $y'=\sum\limits_{i=1}^p {y}_i\ot c_i$ belongs to the submodule $W_{n+1}\subseteq C^{(n+1)}$ from the construction of $C_{n+1}^!$. Note that $W_2=\im\Delta_{1,1}$. Thus $y'=x+\sum\limits_{j=1}^q y_j''\ot c^j_{1,1}\ot c^j_{2,1}$, for some $x\in W_n\ot C_1$, $y''_1,\dots,y''_q\in C_1^{(n-1)}$ and $c^1,\dots,c^q\in C_2$. The relation $y=0$ now follows by the computation
\begin{gather*}
 \sum_{i=1}^p \widehat{y}_i\ot c_i=\sum_{j=1}^q \widehat{y_j''\ot c^j_{1,1}}\ot c^j_{2,1}=d_r^{n}\left(\sum_{j=1}^q {\widehat{y_j''}\ot c^j}\right).
\end{gather*}

For the implication $(3) \Longrightarrow (4)$ we use the morphism of complexes $\phi^\bullet\colon \beta^\bullet_r(C) \rar \operatorname{K}_r^\bullet(C^!,C)$, which lifts the identity of $R$, see Section~\ref{fa:exemples_morphisms}. Since both $\K^{\bullet}_{r}(C^!,C)$ and $\beta{}^{\bullet}_{r}(C)$ are resolutions of $R$ by injective right $C$-comodules, by the comparison theorem, the morphism $\{\phi^n\}_{n\in\N}$ is invertible up to a homotopy in the category of complexes of right $C$-comodules. Hence $\Hom^C(R,\phi^\bullet)$ is invertible up to a homotopy in the category of complexes of right $R$-modules. It follows that $\h^n(\Hom^C(R,\phi^\bullet))$ is an isomorphism for all $n\geq 0$. Thus $\phi^n_C\colon E_n(C)\to C^!_n$ is an isomorphism, as it coincides with $\h^n(\Hom^C(R,\phi^\bullet))$, see Section~\ref{fa:exemples_morphisms}.

The implication $(4) \Longrightarrow (5)$ follows immediately since $C^!$ is always strongly graded, hence $E(C)$ is so. Furthermore, $(5) \iff (6)$ and $(5) \Longrightarrow (7)$ are direct consequences of Lemma~\ref{lema:ring}.

We conclude the proof by remarking, for the implication $(3) \Longrightarrow (2)$, that the complex $\K^\bullet_r(C^!,C)$ is isomorphic to $\K^\bullet_r(E(C),C)$, which makes the latter exact.
\end{proof}

\begin{Remark}
 By the proof of \cite[Theorem~2.14]{jps}, the complex $\K_\bullet^l(A,A^!)$ is isomorphic to the Koszul complex of $A$, which was introduced in \cite[p.~483]{bgs}. By analogy, $\operatorname{K}^\bullet(C^!,C)$ will be seen as the Koszul complex associated to a connected $R$-coring $C$.
\end{Remark}

Using the method from the previous section we are going to show that any Koszul coring is quadratic. This property of Koszul corings will follow as a direct application of a cohomological criterion for bijectivity of a morphism between two strongly graded corings.

\begin{Lemma} \label{lemmaext}
Let $C$ be a strongly graded $R$-coring. Then the following sequence is exact:
\begin{gather*}
0 \longrightarrow C_m \longrightarrow \operatorname{Ext}^{2,m}_{C_{<m}} (R,R) \longrightarrow \operatorname{Ext}^{2,m}_C(R,R) \longrightarrow 0.
\end{gather*}
\end{Lemma}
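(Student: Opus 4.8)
The plan is to dualize the argument given for Lemma~\ref{lemator}, replacing the algebra $A/A^{\ge m}$ by the subcoring $C_{<m} := \oplus_{p<m} C_p$, the bar chain complex $\Omega_\bullet(A,m)$ by the bar cochain complex $\Omega^\bullet(C,m)$, and surjectivity of the iterated multiplication by injectivity of the iterated comultiplication. Concretely, since $C$ is strongly graded, the component $\Delta_{1,m-1}\colon C_m \to C_1 \ot C_{m-1}$ is injective; more generally, for any positive $2$-partition $\bs m = (m_1,m_2)$ of $m$, the map $\Delta_{m_1,m_2}\colon C_m \to C_{m_1}\ot C_{m_2}$ is injective, so the relevant component $d_m^1\colon \bigoplus_{\bs m_1 \in \Pa{m}{1}} C_{\bs m_1} = C_m \to \bigoplus_{\bs m_2 \in \Pa{m}{2}} C_{\bs m_2}$ of the differential in $\Omega^\bullet(C,m)$ is injective. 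Hence the sequence
\begin{gather*}
0 \longrightarrow C_m \xrightarrow{\ d_m^1\ } \bigoplus_{\bs m \in \Pa{m}{2}} C_{\bs m} \longrightarrow \Coker d_m^1 \longrightarrow 0
\end{gather*}
is exact, which is the coring analogue of the sequence \eqref{surj}.

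Next I would identify the two Ext-groups in terms of $\Omega^\bullet(C,m)$ and $\Omega^\bullet(C_{<m},m)$. Using $\Omega^\bullet(C,m)$ one has $\operatorname{Ext}^{2,m}_C(R,R) = \Ker d_m^2 / \im d_m^1$, which is the dual of \eqref{tor2}. For $C_{<m}$, the key point is that in the bar cochain complex $\Omega^\bullet(C_{<m},m)$ the term in cohomological degree $1$ is $(C_{<m})_m = 0$, since $C_{<m}$ has no homogeneous component of degree~$m$; therefore the differential $d_m^1$ out of degree~$1$ is the zero map, and $\operatorname{Ext}^{2,m}_{C_{<m}}(R,R) = \Ker d_m^2$, where now $d_m^2\colon \bigoplus_{\bs m \in \Pa{m}{2}} C_{\bs m} \to \bigoplus_{\bs m \in \Pa{m}{3}} C_{\bs m}$ — and this is the same $d_m^2$ as for $C$, because the degree-$2$, degree-$3$ parts of $\Omega^\bullet$ only involve the components $C_p$ with $p<m$ (each entry of a positive $3$-partition of $m$ is $<m$, and likewise a positive $2$-partition of $m$ can have an entry equal to $m$ only if the other is $0$, which is excluded). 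So $\operatorname{Ext}^{2,m}_{C_{<m}}(R,R) = \Ker d_m^2$, the dual of \eqref{tor1}.

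Assembling these, the exact sequence above restricts to the kernel of $d_m^2$: since $\im d_m^1 \subseteq \Ker d_m^2$, we get
\begin{gather*}
0 \longrightarrow C_m \longrightarrow \Ker d_m^2 \longrightarrow \Ker d_m^2 / \im d_m^1 \longrightarrow 0,
\end{gather*}
which upon substituting $\Ker d_m^2 = \operatorname{Ext}^{2,m}_{C_{<m}}(R,R)$ and $\Ker d_m^2/\im d_m^1 = \operatorname{Ext}^{2,m}_C(R,R)$ yields exactly the claimed sequence. The main obstacle — really the only delicate point — is verifying cleanly that $\Delta_{m_1,m_2}$ is injective for \emph{every} positive $2$-partition of $m$ (not just $(1,m-1)$), so that $d_m^1$ really is injective; this follows from the strong grading of $C$ via the equivalence, recalled in Section~\ref{corings}, between strong grading and injectivity of all the $\Delta^n$, together with the factorization $\Delta^m = (\Delta^{m_1}\ot\Delta^{m_2})\circ\Delta_{m_1,m_2}$. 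The matching of the degree-$2$/degree-$3$ portions of $\Omega^\bullet(C,m)$ and $\Omega^\bullet(C_{<m},m)$ is then a routine bookkeeping check on positive partitions, exactly parallel to the ring case in Lemma~\ref{lemator}.
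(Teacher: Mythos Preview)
Your proposal is correct and follows essentially the same approach as the paper's proof: both identify $\Ext^{2,m}_C(R,R) = \Ker d_m^2/\im d_m^1$ and $\Ext^{2,m}_{C_{<m}}(R,R) = \Ker d_m^2$ via the bar cochain complexes (noting that $(C_{<m})_m=0$ kills the degree-$1$ term and that the degree-$2$ and degree-$3$ parts coincide with those for $C$), use the strong grading to ensure $d_m^1$ is injective, and then read off the short exact sequence $0 \to C_m \to \Ker d_m^2 \to \Ker d_m^2/\im d_m^1 \to 0$.
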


\begin{proof} The dif\/ferential $d_m^1$ of $\Omega^\bullet(C,m)$ maps $c \in C_m$ to the family $\{\Delta_{\bs{m}}(c) \}_{\bs{m}\in\mathscr{P}_2(m)}$ where, by notation, $\Delta_{\bs{m}}$ is the component $\Delta_{m_1,m_2}\colon C_{m_1+m_2}\to C_{m_1}\ot C_{m_2}$, for any positive $2$-partition $\bs{m}=(m_1,m_2)$ of $m$. By hypothesis $\Delta_{\bs{m}}$ is injective for all $\bs{m}\in\mathscr{P}_2(m)$, so $d_m^1$ is injective as well. Thus the sequence
\begin{gather*}
 0 \longrightarrow C_m \xrightarrow{d_m^1} \Ker d_m^2 \longrightarrow \Ker d_m^2/\im d_m^1 \longrightarrow 0
\end{gather*}
is exact. Note that $\operatorname{Ext}^{2,m}_C(R,R) =\Ker d_m^2/\im d_m^1$. On the other hand, the complex $\Omega^\bullet(C_{<m},m)$ coincides in small degrees with
\begin{gather*}
0 \longrightarrow 0 \xrightarrow{d_m^0} 0 \xrightarrow{d_m^1} \bigoplus \limits_{\boldsymbol{m}_2\in\mathscr{P}_2(m)} C_{\boldsymbol{m}_2} \xrightarrow{d_m^2}  \bigoplus \limits_{\boldsymbol{m}_3\in\mathscr{P}_{3}(m)} C_{\boldsymbol{m}_3} ,
\end{gather*}
so we conclude the proof by remarking that $\Ext^{2,m}_{C_{<m}}(R,R) \simeq \Ker d_m^2.$
\end{proof}

\begin{Proposition}\label{prop:injectiv}
Let $\pi\colon C\to D$ denote a morphism of strongly graded connected $R$-rings. If~$m$ is a positive integer such that $\Ext^{2,m}_C(R,R)=0$ and the components $\pi_i$ are bijective, for all $0\leq i <m$, then $\pi_m$ is bijective as well.
\end{Proposition}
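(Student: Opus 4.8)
The plan is to transcribe, with the obvious dualization, the proof of Proposition~\ref{proptor}: the truncated quotient $R$-ring $A/A^{\geq m}$ is replaced by the truncated sub-$R$-coring $C_{<m}:=\oplus_{i<m}C_i$ (and likewise $D_{<m}$), and Lemma~\ref{lemmaext} takes over the role of Lemma~\ref{lemator}. (Here, as the hypotheses make clear, $C$ and $D$ are connected strongly graded $R$-corings.) Since $\pi$ preserves the grading it restricts to a morphism of connected $R$-corings $\pi_{<m}\colon C_{<m}\to D_{<m}$, and because every component $\pi_i$ with $i<m$ is bijective, this restriction is an isomorphism. Now observe that every entry of a positive $n$-partition of $m$ with $n\geq 2$ is strictly smaller than $m$; hence the maps $\Omega^n(C,m)\to\Omega^n(D,m)$ induced by $\pi$ are bijective for all $n\geq 2$. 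Restricting the bijections in degrees $2$ and $3$ to the kernels of $d_m^2$, and using the identification $\Ext^{2,m}_{C_{<m}}(R,R)\cong\Ker d_m^2$ established in the proof of Lemma~\ref{lemmaext}, we obtain an isomorphism $\widetilde{\pi}_m\colon\Ext^{2,m}_{C_{<m}}(R,R)\to\Ext^{2,m}_{D_{<m}}(R,R)$.

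Next, since $\pi$ intertwines the comultiplications of $C$ and $D$ it is compatible with all the maps appearing in Lemma~\ref{lemmaext}; applying that lemma to $C$ and to $D$ therefore yields a commutative diagram with exact rows,
\begin{gather*}
\xymatrixcolsep{2pc}\xymatrix{
0\ar[r]& C_m \ar[r]\ar[d]^{\pi_m} & \Ext^{2,m}_{C_{<m}}(R,R)\ar[r]\ar[d]^{\widetilde{\pi}_m} & \Ext^{2,m}_C(R,R)\ar[r]\ar[d]^{\overline{\pi}_m} & 0\\
0\ar[r]& D_m \ar[r] & \Ext^{2,m}_{D_{<m}}(R,R)\ar[r] & \Ext^{2,m}_D(R,R)\ar[r] & 0,}
\end{gather*}
in which the left-hand vertical arrow is the degree-$m$ component of $\pi$, the map $\overline{\pi}_m$ is the one induced on cohomology, and $\widetilde{\pi}_m$ is an isomorphism by the preceding paragraph.

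It remains to apply the Snake Lemma. Since $\widetilde{\pi}_m$ is an isomorphism, its kernel and cokernel vanish, so the six-term exact sequence collapses to $0\to\Ker\pi_m\to 0$ and $0\to\Ker\overline{\pi}_m\to\Coker\pi_m\to 0$; hence $\pi_m$ is injective and $\Coker\pi_m\cong\Ker\overline{\pi}_m$. The hypothesis $\Ext^{2,m}_C(R,R)=0$ makes the source of $\overline{\pi}_m$ trivial, so $\Ker\overline{\pi}_m=0$ and therefore $\Coker\pi_m=0$ as well; thus $\pi_m$ is bijective. I do not anticipate any genuine obstacle here, the whole argument being a routine mirror image of the proof of Proposition~\ref{proptor}; the only point deserving a little care is the verification that a morphism of $R$-corings really does induce the three vertical arrows and makes the two squares commute, and this follows at once from the fact that $\pi$ is a morphism of coalgebras in the category of $R$-bimodules, so that it commutes with the maps $\Delta_{\bs{m}}$ and with the differentials of $\Omega^\bullet(-,m)$.
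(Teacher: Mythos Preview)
Your proof is correct and follows essentially the same approach as the paper: set up the commutative ladder from Lemma~\ref{lemmaext}, observe that $\widetilde{\pi}_m$ is an isomorphism because the relevant partitions involve only degrees $<m$, and apply the Snake Lemma together with $\Ext^{2,m}_C(R,R)=0$. The paper's own proof is more terse but identical in substance.
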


\begin{proof}
As in the case of graded $R$-rings, $\pi$ induces morphisms
\begin{gather*}
\ol{\pi}^m\colon \Ext^{2,m}_C(R,R)\to \Ext^{2,m}_D(R,R) \qquad \text{and} \qquad \widetilde{\pi}^m\colon \Ext^{2,m}_{C_{<m}}(R,R)\to \Ext^{2,m}_{D_{<m}}(R,R),
\end{gather*} such that the following diagram is commutative,
 \begin{gather*}
 \xymatrixcolsep{2pc}\xymatrix{ 0 \ar[r] & C_m \ar[d]^{\pi_m} \ar[r] & \operatorname{Ext}_{C_{<m}}^{2,m}(R,R) \ar[d]^-{\widetilde{\pi}_m} \ar[r] & \operatorname{Ext}^{2,m}_C(R,R) \ar[d]^-{\overline{\pi}_m} \ar[r]& 0 \\
0 \ar[r] & D_m \ar[r] & \operatorname{Ext}_{D_{<m}}^{2,m}(R,R) \ar[r] & \operatorname{Ext}^{2,m}_D(R,R) \ar[r]& 0.
}
 \end{gather*}
The map $\widetilde{\pi}_m$ is an isomorphism, as $\pi_i$ is bijective, for all $i<m$. Therefore, by Snake lemma, we have $\Ker\pi_m=0$ and $\coker\pi_m\cong\Ker\ol{\pi}_m=0$.
\end{proof}

\subsection{Quadratic corings}\label{fa:quadratic_coring}
For a connected $R$-coring $C$, the family $\{\Delta(n)\}_{n\in\N}$ of iterated comultiplications $\Delta(n)\colon C_n\to C^{(n)}_1$ def\/ines a morphism $\phi_C\colon C\to T_R^c(C_1)$. The image of $\phi_C$ is a subcoring $\ol{C}$ of $\widetilde{C}:=\br{C_1,\im \Delta_{1,1} }$. Hence we may regard $\phi_C$ as a coring morphism from~$C$ to~$\widetilde{C}$. We shall say that $C$ is \emph{quadratic} if and only if this morphism is a bijection. Therefore, $C$~is quadratic if and only if it is strongly graded and $\ol{C}=\widetilde{C}$.

Let $C$ be a strongly graded $R$-coring. As in the case of $R$-rings, we can relate $Z^{2,m}=\Ker d^2_m$, the set of all $2$-cocycles in $\Ou{\bullet}{C,m}$, and $\widetilde{C}_m$. To do this we f\/irst notice that, for any $2$-cocycle $x=\br{x_{\bs{m}}}_{\bs{m}\in\Pa{m}{2}}$, the element $\Delta(\bs{m})(x_{\bs{m}})$ does not depend on $\bs{m}\in\Pa{m}{2}$, as the components of the (iterated) comultiplication are all injective. Recall that $\Delta(\bs{m})=\Delta({m_1})\ot\Delta({m_2})$, for any positive $2$-partition $\bs{m}=(m_1,m_2)$. Since $\Delta(m)=\Delta(\bs{m})\circ \Delta_{\bs{m}}$, for any positive $2$-partition of $m$, it is not dif\/f\/icult to see that $\Delta(\bs{m})(x_{\bs{m}})\in \widetilde{C}_m$. Hence, we can def\/ine the function $\alpha^m\colon Z^{2,m}\to \widetilde{C}_m$ by $\alpha^m(x)=\Delta(\bs{m})(x_{\bs{m}})$.

Let $B^{2,m}:=\im d^1_m$ denote the set of $2$-coboundaries. If $x=\br{x_{\bs{m}}}_{\bs{m}\in\Pa{m}{2}}$ is a $2$-coboundary, then there is $c\in C_m$ such that $x_{\bs{m}}=\Delta^{\bs{m}}(c)$, for all positive $2$-partitions $\bs{m}$. Thus $\alpha^m(x)\in\ol{C}_m$. Henceforth, $\alpha^m$ induces a map, still denoted by $\alpha^m$, from $B^{2,m}$ to $\ol{C}_m$. We get the following commutative diagram{\samepage
\begin{gather*}
 \xymatrix{B^{2,m} \ar[r]^-{\alpha^m} \ar[d] & \ol{C}_m \ar[d] \\
 \ar[r]^-{\alpha^m} Z^{2,m} & \widetilde{C}_m,
 }
%\label{H^2,n}
\end{gather*}
where the vertical arrows represent the canonical inclusions.}

\begin{Lemma}
 Let $C$ be a strongly graded $R$-coring. If $\ol{C}_m = \widetilde{C}_m$, then $\Ext^{2,m}_C(R,R)=0$.
\end{Lemma}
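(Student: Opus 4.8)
The plan is to dualize the (unlabelled) lemma of Section~\ref{section3} that is used in the proof of Proposition~\ref{prop:quadcoring}; the dual version is in fact shorter, because the comparison map $\alpha^m$ here \emph{issues from} the cocycles and is therefore injective, rather than being merely surjective onto them. Recall first, exactly as in the proof of Lemma~\ref{lemmaext}, that $\Ext^{2,m}_C(R,R)=Z^{2,m}/B^{2,m}$, so it suffices to show $Z^{2,m}\subseteq B^{2,m}$, i.e.\ that every $2$-cocycle is a $2$-coboundary. The whole argument will be routed through the map $\alpha^m\colon Z^{2,m}\to\widetilde{C}_m$ built just before the statement, which satisfies $\alpha^m(x)=\Delta(\bs{m})(x_{\bs{m}})$ for every $\bs{m}\in\mathscr{P}_2(m)$ and restricts to a map $B^{2,m}\to\ol{C}_m$.

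The first point I would establish is that $\alpha^m$ is injective. If $\alpha^m(x)=0$, then for any positive $2$-partition $\bs{m}=(m_1,m_2)$ of $m$ we get $(\Delta(m_1)\otimes\Delta(m_2))(x_{\bs{m}})=0$; since $C$ is strongly graded the iterated comultiplications $\Delta(m_1)$ and $\Delta(m_2)$ are injective, and their tensor product over $R$ is injective as well because $R$ is semisimple and $\otimes_R$ is exact, whence $x_{\bs{m}}=0$ for every $\bs{m}$, that is $x=0$. The second point is the compatibility of $\alpha^m$ with the coboundary map: for $c\in C_m$ one has $d_m^1(c)=\{\Delta_{\bs{m}}(c)\}_{\bs{m}\in\mathscr{P}_2(m)}$, so, using $\Delta(m)=\Delta(\bs{m})\circ\Delta_{\bs{m}}$, we obtain $\alpha^m(d_m^1(c))=\Delta(\bs{m})(\Delta_{\bs{m}}(c))=\Delta(m)(c)$.

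With these two facts in hand the proof closes in two lines. Let $x\in Z^{2,m}$; by construction $\alpha^m(x)\in\widetilde{C}_m$, and the hypothesis $\ol{C}_m=\widetilde{C}_m$ together with $\ol{C}_m=\im\Delta(m)$ yields $\alpha^m(x)=\Delta(m)(c)$ for some $c\in C_m$. Now $d_m^1(c)\in B^{2,m}\subseteq Z^{2,m}$ and $\alpha^m(d_m^1(c))=\Delta(m)(c)=\alpha^m(x)$, so the injectivity of $\alpha^m$ forces $x=d_m^1(c)\in B^{2,m}$. Hence $Z^{2,m}=B^{2,m}$ and $\Ext^{2,m}_C(R,R)=0$. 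I expect the only delicate step to be the injectivity of $\alpha^m$, which genuinely uses both the strong grading of $C$ and the exactness of $\otimes_R$ over the semisimple ring $R$; the remaining ingredients — that $\alpha^m(x)\in\widetilde{C}_m$ for cocycles and $\alpha^m(B^{2,m})\subseteq\ol{C}_m$, and the identity $\Ext^{2,m}_C(R,R)=Z^{2,m}/B^{2,m}$ — are already recorded in the text.
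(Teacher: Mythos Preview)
Your proof is correct and follows essentially the same approach as the paper: both use that $\alpha^m(x)\in\widetilde{C}_m=\ol{C}_m=\im\Delta(m)$ to find $c\in C_m$ with $\Delta(\bs{m})(x_{\bs{m}})=\Delta(m)(c)=\Delta(\bs{m})(\Delta_{\bs{m}}(c))$, and then invoke injectivity of the iterated comultiplications (your ``$\alpha^m$ is injective'' is exactly the paper's ``$\Delta(\bs{m})$ is injective'' applied componentwise) to conclude $x_{\bs{m}}=\Delta_{\bs{m}}(c)$ for every $\bs{m}$, i.e.\ $x=d_m^1(c)$. The only difference is cosmetic: you package the componentwise argument into a single injectivity statement for $\alpha^m$.
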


\begin{proof}
 If $x=\br{x_{\bs{m}}}_{\bs{m}\in\Pa{m}{2}}$ is a $2$-cocycle, then $\alpha^m(x)\in \widetilde{C}_m=\ol{C}_m$. Since $\ol{C}_m$ is the image of~$\Delta(m)$, there is $c\in C_m$ such that $\Delta(\bs{m})(x_{\bs{m}})=\Delta(m)(c)$. As $\Delta(m)=\Delta(\bs{m})\circ \Delta_{\bs{m}}$ and~$\Delta(\bs{m})$ is injective, we deduce that $x_{\bs{m}}= \Delta_{\bs{m}}(c)$, for any positive $2$-partition~$\bs{m}$. Thus $x$ is a~$2$-coboundary.
\end{proof}

\begin{Proposition} \label{prop:quadring} A strongly graded $R$-coring $C$ is quadratic if and only if $\operatorname{Ext}^{2,m}_C(R,R) =0$, for all $m \geq 3$.
\end{Proposition}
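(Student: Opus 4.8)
The plan is to dualize the proof of Proposition~\ref{prop:quadcoring}. The role of the ring $\langle A^1,K_A\rangle$ and of the comparison map $\phi_A$ will be played by the coring $\widetilde{C}=\{C_1,\im\Delta_{1,1}\}$ and by the canonical coring morphism $\phi_C\colon C\to\widetilde{C}$ introduced in this subsection, while Proposition~\ref{proptor} is replaced by its dual, Proposition~\ref{prop:injectiv}. Since, for a strongly graded $C$, being quadratic means precisely that $\phi_C$ is bijective, both implications of the proposition reduce to controlling the components $(\phi_C)_m\colon C_m\to\widetilde{C}_m$ degree by degree.

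The implication from $C$ quadratic to the vanishing of $\operatorname{Ext}^{2,m}_C(R,R)$ for all $m\geq 3$ is immediate: if $C$ is quadratic then $\overline{C}_m=\widetilde{C}_m$ for every $m$, so the Lemma immediately preceding this proposition yields $\operatorname{Ext}^{2,m}_C(R,R)=0$ for all $m$, a fortiori for $m\geq 3$.

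For the converse I would assume $\operatorname{Ext}^{2,m}_C(R,R)=0$ for all $m\geq 3$ and prove, by induction on $m$, that every component $(\phi_C)_m$ is an isomorphism. First one checks that $\widetilde{C}$ is itself strongly graded --- it is a graded subcoring of the tensor coalgebra $T_R^c(C_1)$, whose deconcatenation maps are injective, $R$ being semisimple --- so that $\phi_C$ is a morphism of strongly graded connected $R$-corings and Proposition~\ref{prop:injectiv} is applicable to it. The cases $m=0$ and $m=1$ are trivial, since $(\phi_C)_0=\operatorname{Id}_R$ and $(\phi_C)_1=\operatorname{Id}_{C_1}$; the case $m=2$ holds because $(\phi_C)_2=\Delta_{1,1}$ induces a bijection $C_2\to\im\Delta_{1,1}=\widetilde{C}_2$, using that $C$ is strongly graded. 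For the inductive step one fixes $m\geq 3$, assumes $(\phi_C)_k$ is an isomorphism for all $k<m$, and applies Proposition~\ref{prop:injectiv}: since $\operatorname{Ext}^{2,m}_C(R,R)=0$ by hypothesis, it follows that $(\phi_C)_m$ is bijective. Hence $\phi_C$ is bijective and $C$ is quadratic.

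Given the machinery already in place --- Proposition~\ref{prop:injectiv} and the Lemma just above --- I do not expect any genuine difficulty; the only points requiring a little care are checking that $\widetilde{C}$ is strongly graded, so that Proposition~\ref{prop:injectiv} may legitimately be invoked for $\phi_C$, and disposing of the low degrees $m\leq 2$ of the induction by hand, since the hypothesis only supplies the vanishing of $\operatorname{Ext}^{2,m}_C(R,R)$ from degree $3$ onwards.
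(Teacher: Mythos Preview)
Your proposal is correct and follows essentially the same approach as the paper's own proof, which is in fact briefer than yours: it simply invokes the preceding Lemma for the necessity and then says that sufficiency follows by induction from Proposition~\ref{prop:injectiv}, proceeding as in Proposition~\ref{prop:quadcoring}. Your additional remarks---that $\widetilde{C}$ must be checked to be strongly graded before Proposition~\ref{prop:injectiv} can be applied, and that the base cases $m\leq 2$ must be handled separately---are exactly the details the paper leaves implicit.
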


\begin{proof} In view of the preceding Lemma, the vanishing of $\operatorname{Ext}^{2,m}_C(R,R)$ is a necessary condition. To prove that it is a suf\/f\/icient condition as well, using Proposition \ref{prop:injectiv} and proceeding as in the proof of Proposition~\ref{prop:quadcoring}, one shows by induction that the components of $\phi_C\colon C\to\widetilde{C}$ are all isomorphisms.
\end{proof}

\begin{Corollary} \label{co:cquad}
Any Koszul $R$-coring $C$ is quadratic.
\end{Corollary}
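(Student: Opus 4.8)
The plan is to obtain the statement at once from Theorem~\ref{thm:kcoring} together with Proposition~\ref{prop:quadring}, so the proof will be very short: the substantive work has already been absorbed into those two results. Recall that, in the setting of this section, a Koszul $R$-coring is in particular connected and strongly graded, so both results apply to $C$; I would state this at the outset so that the invocations below are legitimate.

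First I would invoke Theorem~\ref{thm:kcoring}. Since $C$ is Koszul, the implication $(1)\Longrightarrow(7)$ there yields that $E^{n,m}(C)=0$ whenever $n\neq m$. Specializing to homological degree $n=2$ and using that, by definition, $E^{2,m}(C)=\operatorname{Ext}^{2,m}_C(R,R)$, this gives $\operatorname{Ext}^{2,m}_C(R,R)=0$ for every $m\neq 2$; in particular it holds for all $m\geq 3$.

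Now I would feed this into Proposition~\ref{prop:quadring}, whose hypothesis is exactly that $\operatorname{Ext}^{2,m}_C(R,R)$ vanish for all $m\geq 3$. Its conclusion is that the canonical morphism $\phi_C\colon C\to\widetilde{C}$ is bijective, i.e.\ that $C$ is quadratic, which is what we want.

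I do not expect any real obstacle here: the chain of implications inside Theorem~\ref{thm:kcoring} (notably $(7)\Rightarrow(3)\Rightarrow\cdots$) carries the homological content, and Proposition~\ref{prop:quadring} rests on the bijectivity criterion of Proposition~\ref{prop:injectiv}. The only points deserving a line of care are bookkeeping ones: the degree $m=2$ is genuinely exceptional and harmless, since $\operatorname{Ext}^{2,2}_C(R,R)$ need neither vanish nor be controlled; the range $m\geq 3$ required by Proposition~\ref{prop:quadring} is precisely the part of the vanishing $E^{n,m}(C)=0$ ($n\neq m$) in homological degree $n=2$ that we have at our disposal; and one must keep in mind that ``Koszul'' here presupposes strongly graded, so that Theorem~\ref{thm:kcoring} and Proposition~\ref{prop:quadring} are indeed applicable to $C$.
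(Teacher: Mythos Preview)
Your proposal is correct and follows essentially the same route as the paper's own proof, which simply combines the equivalence $(1)\Leftrightarrow(7)$ of Theorem~\ref{thm:kcoring} with Proposition~\ref{prop:quadring}. Your added remarks about the strongly graded hypothesis and the harmless case $m=2$ are reasonable bookkeeping that the paper leaves implicit.
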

\begin{proof}
The conclusion follows by combining the results in Proposition \ref{prop:quadring} and the fact the statements $(1)$ and $(7)$ of Theorem~\ref{thm:kcoring} are equivalent.
\end{proof}

Finally, let us add that a direct (co)product of two Koszul (co)rings is still Koszul. For $R$-rings the result was used in \cite[Section~2.10]{mst} to ``patch'' tiles forming a Koszul poset. We restate and prove the proposition using Koszul pairs, obtaining at the same time a similar result for Koszul corings.

Let $R$ and $S$ be semisimple rings. We assume that $A$ is a connected $R$-ring, $B$ is a connected $S$-ring, $C$ is a connected $R$-coring and $D$ is a connected $S$-coring. We also suppose that $(A,C)$ and $(B,D)$ are almost-Koszul pairs with respect to the $R$-bilinear map $\theta_{C,A}\colon C_1\to A^1$ and the $S$-bilinear map $\theta_{D,B}\colon D_1\to B^1$, respectively.

For an $R$-bimodule $V$ and an $S$-bimodule $W$ we shall regard the abelian group $V\oplus W$ as an $R\times S$-bimodule with respect to the actions: $(r,s)\cdot(v,w)=(rv,sw)$ and $(v,w)\cdot(r',s')=(vr',ws')$.
Note that we have an isomorphism of left $A\times B$-modules
\begin{gather}\label{izo:RxS}
 (A\times B)\ot_{R\times S}(V\oplus W)\cong (A\otimes_R V)\oplus(B\ot_S W),
\end{gather}
where the $A\times B$-action on the direct sum of $A\otimes_R V$ and $B\ot_S W$ is also def\/ined component-wise. This isomorphism is def\/ined by the map $(a,b)\ot_{R\times S}(v,w) \mapsto(a\ot_R v, b\ot_S w)$.

Recall that the comultiplication of the coproduct $C\oplus D$ is given by
\begin{gather*}
 \Delta_{p,q}(c,d)=\sum(c_{1,p},0)\ot_{R\times S}(c_{2,q},0)+\sum (0,d_{1,p})\ot_{R\times S}(0,d_{2,q}).
\end{gather*}
Let $\theta':=\theta_{C,A}$ and $\theta'':=\theta_{D,B}$. We def\/ine $\theta\colon C_1\oplus D_1\to A^1\times B^1$ given by $\theta(c,d)=\big(\theta'(c),\theta''(d)\big)$.
\begin{Proposition} \label{suma_koszul}
The pair $(A\times B,C\oplus D)$ is almost-Koszul with respect to the isomor\-phism~$\theta$. Moreover, this pair is Koszul if and only if the pairs $(A,C)$ and $(B,D)$ are so.
\end{Proposition}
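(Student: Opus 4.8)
The plan is to verify first that $(A\times B, C\oplus D)$ is an almost-Koszul pair, and then to identify the associated Koszul complexes with direct sums of the Koszul complexes of the two constituent pairs, so that exactness of one is equivalent to exactness of both summands.

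\textbf{Step 1: the almost-Koszul structure.} First I would check that $A\times B$ is a connected graded $R\times S$-ring and $C\oplus D$ is a connected graded $R\times S$-coring, with homogeneous components $(A\times B)^n = A^n\times B^n$ and $(C\oplus D)_n = C_n\oplus D_n$; connectedness in degree $0$ reads $R\times S$ on both sides. The map $\theta\colon C_1\oplus D_1\to A^1\times B^1$, $(c,d)\mapsto(\theta'(c),\theta''(d))$, is visibly an isomorphism of $R\times S$-bimodules since $\theta'$ and $\theta''$ are. It remains to check the compatibility condition \eqref{ec:theta}, i.e.\ that the composite $(C\oplus D)_2 \to (C\oplus D)_1^{(2)} \to (A\times B)^1{}^{(2)} \to (A\times B)^2$ vanishes. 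Using the explicit formula for $\Delta_{1,1}$ on $C\oplus D$ and the fact that the multiplication on $A\times B$ is component-wise, this composite sends $(c,d)$ to $\big(\sum\theta'(c_{1,1})\theta'(c_{2,1}),\ \sum\theta''(d_{1,1})\theta''(d_{2,1})\big)$, which is $(0,0)$ by \eqref{ec:qksz} applied to $(A,C)$ and to $(B,D)$ separately. This settles the first assertion.

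\textbf{Step 2: splitting the Koszul complex.} For the second assertion, I would work with the chain complex $\K^l_\bullet(A\times B, C\oplus D)$ of left $A\times B$-modules, whose term in degree $n$ is $(A\times B)\ot_{R\times S}(C\oplus D)_n = (A\times B)\ot_{R\times S}(C_n\oplus D_n)$. By the isomorphism \eqref{izo:RxS}, this is naturally $\big(A\ot_R C_n\big)\oplus\big(B\ot_S D_n\big) = \K^l_n(A,C)\oplus\K^l_n(B,D)$ as left $A\times B$-modules. The point is then that under this identification the differential $d_n^l$ of $\K^l_\bullet(A\times B,C\oplus D)$ is the direct sum of the differentials of $\K^l_\bullet(A,C)$ and $\K^l_\bullet(B,D)$: this is because $d_n^l$ is built from $\theta=(\theta',\theta'')$ and the comultiplication of $C\oplus D$, both of which respect the direct-sum decomposition (the comultiplication has no ``cross terms'' between the $C$-part and the $D$-part, by the displayed formula for $\Delta_{p,q}$ on $C\oplus D$). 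One must also check compatibility in degree $-1$ and $0$: $\K^l_{-1} = R\times S$ and $d^l_0$ is the action on $C_0\oplus D_0 = R\times S$, which again splits. Hence $\K^l_\bullet(A\times B, C\oplus D)\cong \K^l_\bullet(A,C)\oplus\K^l_\bullet(B,D)$ as complexes.

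\textbf{Step 3: conclusion.} A direct sum of complexes is exact if and only if each summand is exact; therefore $\K^l_\bullet(A\times B,C\oplus D)$ is exact if and only if both $\K^l_\bullet(A,C)$ and $\K^l_\bullet(B,D)$ are exact. By the characterization recalled at the end of Section~\ref{fa:Koszul pairs} (exactness of any one of the six complexes attached to an almost-Koszul pair is equivalent to that of all the others, and this is precisely the definition of the pair being Koszul), this says exactly that $(A\times B,C\oplus D)$ is Koszul if and only if $(A,C)$ and $(B,D)$ are both Koszul. I expect the only mildly delicate point to be bookkeeping in Step~2 — verifying carefully that the differential genuinely decomposes as a direct sum under \eqref{izo:RxS}, with no surviving cross terms — but this is a routine check once the formulas for $\Delta_{p,q}$ on $C\oplus D$ and for $d^l_n$ are written out; there is no real obstacle beyond it.
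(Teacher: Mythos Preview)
Your proposal is correct and follows essentially the same route as the paper: verify the almost-Koszul condition by a direct computation on $(c,d)\in C_2\oplus D_2$, then use the isomorphism \eqref{izo:RxS} to identify $\K^l_\bullet(A\times B,C\oplus D)$ with $\K^l_\bullet(A,C)\oplus\K^l_\bullet(B,D)$ and conclude. Your write-up is in fact more detailed than the paper's, which merely asserts the splitting of the Koszul complex without spelling out the differential check you flag in Step~2.
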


\begin{proof}
To show that $(A\times B,C\oplus D)$ is almost-Koszul we take $(c,d)\in C_2\oplus D_2 $. We have
\begin{gather*}
 \big(\mu^{1,1}\circ (\theta\ot_{R\times S}\theta)\circ\Delta_{1,1}\big)(c,d)=\sum\big(\theta'(c_{1,1})\theta'( c_{2,1}),0\big)+\sum \big(0,\theta''( d_{1,1})\theta''(d_{2,1})\big)=0.
\end{gather*}
Using the identif\/ication \eqref{izo:RxS} and performing a similar computation as above one can show easily that
\begin{gather*}
 \K_\bullet^l(A\times B, C\oplus D)\cong \K_\bullet^l(A,C)\oplus \K_\bullet^l(B,D).
\end{gather*}
Thus, obviously, $(A \times B, C \oplus D)$ is Koszul if and only if both $(A,C)$ and $(B,D)$ are so.
\end{proof}

\begin{Remark}
We have seen that a connected $R$-ring $A$ is Koszul if and only if there is a~connected graded $R$-coring $C$ such that $(A,C)$ is Koszul. Similarly, a given connected $R$-co\-ring~$C$ is Koszul if and only if there is a connected graded $R$-ring $A$ such that $(A,C)$ is Koszul. In view of the preceding proposition, we deduce that the connected graded $R\times S$-ring $A\times B$ is Koszul if and only if $A$ and $B$ are Koszul too. Similarly, the connected graded $R\times S$-co\-ring~$C\times D$ is Koszul if and only if~$C$ and~$D$ are so.
\end{Remark}

\section{Locally f\/inite Koszul (co)rings}\label{section5}

In this section, we provide a f\/irst application of the results developed so far. More precisely, we shall prove that a left (right) locally f\/inite $R$-ring is Koszul if and only if its left (right) graded dual $R$-coring is Koszul as well. This result will allow us to show in the subsequent section that in a more particular case, the incidence $R$-ring of a graded f\/inite poset is Koszul if and only if its incidence $R$-coring is also Koszul.

Recall that a graded left $R$-bimodule $V=\oplus_{n\in\N} V_n$ is called \emph{left $($right$)$ locally finite} if and only if its components $V_n$ are f\/initely generated as left (right) modules. In the case when $V$ is both left and right locally f\/inite, we will simply say that $V$ is \emph{locally finite}. Throughout this section we keep the assumption that $R$ is a semisimple ring, so all $R$-bimodules are projective as left (and right) $R$-modules.

\subsection[The left dual of an $R$-bimodule]{The left dual of an $\boldsymbol{R}$-bimodule}
\looseness=-1 Let $V$ be an $R$-bimodule. Def\/ine the left dual of $V$ by $^*V=\operatorname{Hom}_R(_RV,{}_RR)$. This becomes a~bi\-module over the opposed ring $R^{\rm op}$ with respect to the actions def\/ined, for $ r\in R$ and $\alpha \in {^*V}$, by
\begin{gather*} (r\cdot \alpha)(v)=\alpha(v) r \qquad \text{and} \qquad (\alpha \cdot r)(v)=\alpha(vr).\end{gather*}

In the case that $V=\oplus_{n\in N}V_n $ is a graded $R$-bimodule we def\/ine the \textit{left graded dual} of $V$ to be the $R^{\rm op}$-bimodule $^{*\text{-gr}}V:=\oplus_{n\in N}{^*V_n}$.

It is known that the dual of the tensor product of two f\/inite-dimensional vector spaces is the tensor product of their duals (see, e.g., \cite[Section~2.7]{bgs}). A similar property holds for $R$-bimodules. More precisely, if $V$ and $W$ are $R$-bimodules, there exists a bi-additive map
\begin{gather*}
\phi'\colon \ {^*V} \times {^*W} \rar {^*(V \ot_R W)}, \qquad \phi'(\alpha , \beta)(v\ot_R w)=\alpha(v\beta(w)).
\end{gather*}
Furthermore, we have $\phi'((\alpha \cdot r) \ot_{R^{\rm op}} \beta)(v \ot_R w)=(\alpha \cdot r)(v\beta(w))=\alpha(rv\beta(w))=\phi(\alpha \ot_{R^{\rm op}} (r \cdot \beta))(v \ot_R w)$. Thus $\phi'$ is $R^{\rm op}$-balanced, so it induces a morphism of abelian groups
\begin{gather*}
\phi\colon \ {^*V} \ot_{R^{\rm op}} {^*W} \rar {^*(V \ot_R W)}, \qquad \phi(\alpha \ot_{R^{\rm op}} \beta)(v\ot_R w)=\alpha(v\beta(w)).
\end{gather*}
As a matter of fact, $\phi$ is an $R^{\rm op}$-bimodule map, as $\phi((r \cdot \alpha) \ot_{R^{\rm op}} \beta)(v \ot_R w)=(r\cdot \alpha)(v\beta(w))=\alpha(v\beta(w)) r.$ On the other hand, $[r \cdot \phi(\alpha \ot_{R^{\rm op}} \beta)](v \ot_R w)=\phi(\alpha \ot_{R^{\rm op}} \beta)(v \ot_R w)r=\alpha(v\beta(w))r$. Right-linearity is proved analogously.

We claim that, under the additional assumption that $W$ is f\/initely generated as a left $R$-module, the map $\phi$ is a bijection. Indeed, since $R$ is semisimple, $W$ is projective as a left $R$-module. So, there are f\/inite dual bases on $W$, that is two sets $\{w_1,\dots,w_n\}\subseteq W$ and $\{{^*}w_1,\dots,{^*}w_n\}\subseteq {^*}W$ such that $w=\sum\limits_{i=1}^n{^*}w_i(w)w_i$, for all $w\in W$.
 Let $\psi\colon {}^*(V\ot_R W) \to {}^*V\ot_{R^{\rm op}}{^*W} $ be the map given by
\begin{gather*}
\psi(\gamma) = \sum_{i=1}^n \gamma(- \ot_R w_i) \ot_{R^{\rm op}}{}^*w_i.
\end{gather*}
In the above formula $\gamma$ denotes an element in ${^*}(V \ot_R W)$ and the application $\gamma(- \ot_R w_i) \colon V \rar R$ acts as $v \mapsto \gamma(v \ot_R w_i)$. The only thing left to show is that $\psi$ and $\phi$ are mutual inverses.

Let $ \gamma =:\phi(\alpha \ot_{R^{\rm op}} \beta)$, for some $\alpha\in{}^*V$ and $\beta\in{}^*W$. Thus $\gamma(v \ot_R w)=\alpha(v\beta(w))$, for all $v\in V$ and $w\in W$. Hence, by the def\/inition of $\psi$, we get
\begin{gather*}
 \psi(\gamma) = \sum_{i=1}^n \gamma(-\ot w_i)\ot_{R^{\rm op}}{^*}w_i = \sum_{i=1}^n \alpha(-\beta(w_i))\ot_{R^{\rm op}}{^*}w_i = \sum_{i=1}^n\alpha\cdot \beta(w_i)\ot_{R^{\rm op}}{^*}w_i.
\end{gather*}
By the def\/inition of the left $R$-action on $^*W$ and the def\/inition of dual bases, together with the fact that $\beta$ is a morphism of left $R$-modules, we have $\beta=\sum\limits_{i=1}^n\beta(w_i)\cdot {^*}w_i$. Hence
\begin{gather*}
 \psi(\gamma)=\alpha \ot_{R^{\rm op}}\left(\sum_{i=1}^n\beta(w_i)\cdot {^*}w_i\right) =\alpha\ot \beta,
\end{gather*}
so $\psi$ is a left inverse of $\phi$.
On the other hand, if $\gamma$ belongs to the left dual of $V\ot_RW$ then
\begin{gather*}
[\phi(\psi(\gamma))](v \ot_R w) = \sum_{i=1}^n\phi\big(\gamma(-\ot_R w_i)\ot_{R^{\rm op}} {^*}w_i)\big)(v \ot w) = \sum_{i=1}^n\gamma(-\ot_R w_i)\big(v{}^*w_i(w)\big).
\end{gather*}
Thus the computation below implies that $\psi$ is a right inverse of $\phi$ as well
\begin{gather*}
\sum_{i=1}^n\gamma(-\ot_R w_i)\big(v{}^*w_i(w)\big)=\sum_{i=1}^n\gamma(v{}^*w_i(w)\ot_Rw_i)=\gamma(v\ot_R \sum_{i=1}^n{}^*w_i(w)w_i)=\gamma(v\ot_R w) .
\end{gather*}

\subsection[The graded dual of an $R$-(co)ring]{The graded dual of an $\boldsymbol{R}$-(co)ring} \label{sec:gr-dual} The left (right) dual of a left (right) f\/initely generated $R$-ring was f\/irst introduced in \cite[Section~17.9]{brw}. This construction can be easily adapted for a left locally f\/inite connected graded $R$-ring $A=\op_n A^n$. First, we def\/ine ${^{*\text{-gr}}}\! A = \op_n {^*}(A^n)$, which is an $R^{\rm op}$-bimodule with respect to the actions def\/ined in the previous subsection. When there is no risk of confusion, we shall drop the parentheses to avoid unnecessary clutter. As such, we will write ${^*}\!A^n$ instead of ${^*}(A^n)$.

Furthermore, for making ${^{*\text{-gr}}}\!A$ a graded connected $R^{\rm op}$-coring, we consider the following diagram:
\begin{gather*}
\xymatrixcolsep{3pc}\xymatrix{ {^*}\!A^{n+m} \ar@{=}[r] \ar[d]_-{{^*}\mu_{n,m}} & {^*}\!A^{n+m} \ar@{.>}[d]^-{\Delta_{n,m}} \notag \\ {^*}(A^n \ot_R A^m) \ar[r]_-\psi & {^*}\!A^n \ot_{R^{\rm op}} {^*}\!A^m.}
\end{gather*}
The leftmost vertical arrow denotes the transposed map of $\mu^{n,m} \colon A^n \ot A^m \rar A^{n+m}$, the component of the multiplication of $A$. The lower morphism $\psi$ is the isomorphism described in the previous subsection. Then $\Delta_{n,m}:=\psi \circ {^*}\mu_{n,m}$ is a morphism of $R^{\rm op}$-bimodules and the family $\{\Delta_{n,m}\}_{n,m\in\N}$ induces a map $\Delta \colon {^{*\text{-gr}}}\!A \rar {^{*\text{-gr}}}\!A \ot {^{*\text{-gr}}}\!A$ that respects the gradings on $ {^{*\text{-gr}}}\!A$ and ${^{*\text{-gr}}}\!A \ot {^{*\text{-gr}}}\!A$.

Let $ \alpha \in {^*}\!A^{n+m}$. One can show that the relation
\begin{gather*}
\Delta_{n,m}(\alpha) = \sum_{i=1}^p \alpha'_{i} \ot_{R^{\rm op}} \alpha_{i}''%\label{eq:Delta*-1}
\end{gather*}
holds true for some $\al'_1,\dots,\al'_p$ and $\al''_1,\dots,\al''_p$ if and only if we have
\begin{gather}
 \alpha(a'a'') =\sum_{i=1}^p \alpha_{i}'(a'\alpha_{i}''(a'')),\label{eq:Delta*-2}
\end{gather}
for all $a' \in A^n$ and $a'' \in A^m$. Using this equivalence it is easy to see now that $\Delta$ def\/ines a~coassociative comultiplication on the left graded dual of $A$, which respects the grading on~$^{*\text{-gr}}\!A$. Let us note that we have an isomorphism of rings $\operatorname{Hom}_R({_R}R,{_R}R)\cong R^{\rm op}$, so we can identify~${^*}\!A^0$ and~$R^{\rm op}$ as $R^{\rm op}$-bimodules. This isomorphism can be extended in a unique way to an $R^{\rm op}$-bimodule morphism $\varepsilon\colon {^{*\text{-gr}}\!A}\to R^{\rm op}$ so that it vanishes on all other homogeneous components of~$^{*\text{-gr}}\! A$. Obviously, $({}^{*\text{-gr}}\! A,\Delta, \varepsilon)$ is a connected graded $R^{\rm op}$-coring, which will be called the \textit{graded left dual $R^{\rm op}$-coring} of $A$. For the comultiplication of $^{*\text{-gr}}\! A$ we will use the Sweedler type notation
\begin{gather}\label{eq:sweedler}
 \Delta_{n,m}(\alpha)=\sum \alpha_{1,n}\ot\alpha_{2,m}.
\end{gather}
Thus, for $\al\in {}^\ast\!A^{n+m}$, $a' \in A^n$ and $a'' \in A^m$, the relation \eqref{eq:Delta*-2} can be rewritten as
\begin{gather}\label{eq:Delta*-3}
 \alpha(a'a'') = \sum \alpha_{1,n}(a'\alpha_{2,m}(a'')).
\end{gather}
Dually, to any graded connected $R$-coring $C$ corresponds a graded connected $R^{\rm op}$-ring $^{*\text{-gr}}C$ that we will call the \emph{graded left dual of} $C$. By def\/inition we have $(^{*\text{-gr}}C)^n={}^*(C_n)$. To simplify the notation we shall write $^*C_n$ instead of $^*(C_n)$. The \emph{graded convolution product} of $\alpha\in {^*C}_n$ and $\beta\in {^*C}_m$ is given by the relation $\alpha*\beta=\mu^{n,m}(\alpha \ot \beta)$. Hence, for $c\in C_{n+m}$, we have
\begin{gather*}
(\alpha*\beta)(c)=\sum\alpha\big(c_{1,n}\beta(c_{2,m})\big).
\end{gather*}
The unit of the graded left dual of $C$ coincides with the counit of $C$. Note that the graded left dual makes sense for a graded $R$-coring $C$ which is not necessarily left locally f\/inite.

In a similar way we can def\/ine the graded right dual $A^{{*\text{-gr}}}$ of a right locally f\/inite connected graded $R$-ring $A$ and the graded right dual $C^{*\text{-gr}}$ of a right locally f\/inite connected graded $R$-coring $C$. One can prove that, for a locally f\/inite $R$-ring $A$, there are canonical isomorphisms
$(^{*\text{-gr}}\!A )^{*\text{-gr}}\cong A\cong {}^{*\text{-gr}}(A^{*\text{-gr}}) $ of graded $R$-rings. Similar isomorphisms can be proved for a locally f\/inite $R$-coring $C$.

In order to investigate the Koszulity of the dual of a locally f\/inite $R$-ring we shall use once again almost-Koszul pairs. More precisely, we have the following result.

\begin{Proposition} \label{prop:a-ksz-dual}
Let $(A,C)$ be an almost-Koszul pair. If $A$ and $C$ are left locally finite, then the pair $({^{*\text{\rm -gr}}}C, {^{*\text{\rm -gr}}}\!A)$ is almost-Koszul. Similarly, if $A$ and $C$ are right locally finite then $(C^{*\text{\rm -gr}},A^{*\text{\rm -gr}})$ is almost-Koszul.
\end{Proposition}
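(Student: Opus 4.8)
The plan is to verify directly the defining data of an almost-Koszul pair for the couple $\big(\dg C,\dg A\big)$. The hard structural work is already done in Section~\ref{sec:gr-dual}: for any connected graded $R$-coring $C$ the graded left dual $\dg C$ is a connected graded $R^{\rm op}$-ring under the graded convolution product, and, because $A$ is left locally finite, the graded left dual $\dg A$ is a connected graded $R^{\rm op}$-coring whose comultiplication components $\Delta_{n,m}\colon{}^{*}A^{n+m}\to{}^{*}A^{n}\otimes{}^{*}A^{m}$ are characterized by~\eqref{eq:Delta*-3}. For the structure map of the new pair I would take
$\theta:={}^{*}\theta_{A,C}\colon\big(\dg A\big)_1={}^{*}A^{1}\longrightarrow{}^{*}C_{1}=\big(\dg C\big)^{1}$,
the transpose of the given isomorphism $\theta_{A,C}\colon C_1\to A^1$. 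Since the left-dual functor ${}^{*}(-)$ is contravariant and sends isomorphisms of $R$-bimodules to isomorphisms of $R^{\rm op}$-bimodules (with $\big({}^{*}\theta_{A,C}\big)^{-1}={}^{*}\big(\theta_{A,C}^{-1}\big)$), this $\theta$ is an isomorphism of $R^{\rm op}$-bimodules. Hence the only thing left to check is the compatibility relation~\eqref{ec:qksz} for $\big(\dg C,\dg A\big)$.

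For that, I would fix $\alpha\in\big(\dg A\big)_2={}^{*}A^{2}$, write $\Delta_{1,1}(\alpha)=\sum\alpha_{1,1}\otimes\alpha_{2,1}$ as in~\eqref{eq:sweedler}, and evaluate the element $\sum\theta(\alpha_{1,1})*\theta(\alpha_{2,1})\in{}^{*}C_{2}$ (here $*$ is the convolution product of $\dg C$) on an arbitrary $c\in C_2$, writing $\Delta^C_{1,1}(c)=\sum_{(c)}c_{1,1}\otimes c_{2,1}$. Unwinding the definition of the convolution product, using that $\theta_{A,C}$ is $R$-bilinear (so that it commutes with scalar multiplication), and then applying the characterization~\eqref{eq:Delta*-3} of $\Delta_{1,1}(\alpha)$ with $n=m=1$ to the arguments $\theta_{A,C}(c_{1,1}),\theta_{A,C}(c_{2,1})\in A^1$, one obtains
\begin{gather*}
 \Big(\sum\theta(\alpha_{1,1})*\theta(\alpha_{2,1})\Big)(c)
 =\sum_{(c)}\alpha_{1,1}\big(\theta_{A,C}(c_{1,1})\,\alpha_{2,1}(\theta_{A,C}(c_{2,1}))\big)\\
 =\sum_{(c)}\alpha\big(\theta_{A,C}(c_{1,1})\,\theta_{A,C}(c_{2,1})\big)
 =\alpha\Big(\sum_{(c)}\theta_{A,C}(c_{1,1})\,\theta_{A,C}(c_{2,1})\Big)=\alpha(0)=0,
\end{gather*}
where the last line uses additivity of $\alpha$ and then relation~\eqref{ec:qksz} for $(A,C)$ applied to $c\in C_2$ (recall that $\theta_{C,A}$ and $\theta_{A,C}$ denote the same isomorphism). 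As $c\in C_2$ was arbitrary, $\sum\theta(\alpha_{1,1})*\theta(\alpha_{2,1})=0$, which is precisely~\eqref{ec:qksz} for $\big(\dg C,\dg A\big)$; therefore that couple is an almost-Koszul pair. The statement for the right graded duals $\big(C^{*\text{-gr}},A^{*\text{-gr}}\big)$ in the right locally finite case follows by the symmetric argument, or from the first part applied to the opposite pair $\big(A^{\rm op},C^{\rm op}\big)$ (almost-Koszul by \cite[Remark~1.4]{jps}), combined with the canonical identifications of right graded duals with the opposites of left graded duals.

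I expect the displayed computation to be short and mechanical; the delicate point is the bookkeeping surrounding it. One has to keep the $R$- and $R^{\rm op}$-actions on all the duals straight and, crucially, invoke the dictionary established in Section~\ref{sec:gr-dual}: that under the natural isomorphism ${}^{*}(A^1\otimes A^1)\cong{}^{*}A^1\otimes{}^{*}A^1$ (which is exactly where left local finiteness of $A$ is used) the degree-one comultiplication component $\Delta_{1,1}$ of $\dg A$ is the map singled out by~\eqref{eq:Delta*-3}, and that the degree-$(1,1)$ part of the convolution product of $\dg C$ is precisely the map $\mu^{1,1}$ appearing in~\eqref{ec:qksz}. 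Once these identifications are pinned down, the content of the proposition is simply that transposition turns the vanishing composite $\mu^{1,1}\circ(\theta_{C,A}\otimes\theta_{C,A})\circ\Delta^C_{1,1}=0$ attached to $(A,C)$ into the transposed composite whose vanishing is the almost-Koszul condition for $\big(\dg C,\dg A\big)$.
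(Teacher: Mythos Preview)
Your proposal is correct and follows essentially the same approach as the paper's proof: both take the transpose ${}^{*}\theta_{C,A}$ as the structure isomorphism and verify relation~\eqref{ec:qksz} by evaluating $\sum{}^{*}\theta(\alpha_{1,1})\ast{}^{*}\theta(\alpha_{2,1})$ on an arbitrary $c\in C_2$, reducing via the convolution formula and the characterization~\eqref{eq:Delta*-3} to the original almost-Koszul condition for $(A,C)$. Your write-up is in fact slightly more careful than the paper's, in that you make explicit where left local finiteness enters (through the isomorphism ${}^{*}(A^1\otimes A^1)\cong{}^{*}A^1\otimes{}^{*}A^1$) and you handle the right-dual case via the opposite pair.
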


\begin{proof}
Let $\theta:=\theta_{C,A}$ denote the isomorphism from the def\/inition of almost-Koszul pairs. We claim that $(^{*\text{-gr}}\! A,{}^{*\text{-gr}}C)$ is almost-Koszul with respect to the transposed map $^*\theta\colon {}^\ast\! A^2\to{}^\ast C_2$. Clearly, $^*\theta$ is an isomorphism of $R^{\rm op}$-bimodules, so we have to prove equation \eqref{ec:qksz}. For $\al\in{}^*\!A^2$ and $c\in C_2$ we have
\begin{gather*}
 \sum {}\big(^*\theta(\al_{1,1})*{}^*\theta(\al_{2,1})\big)(c) = \sum {}^*\theta(\al_{1,1})\big(c_{1,1}{}^*\theta(\al_{2,1})(c_{2,1})\big)\\
 \hphantom{\sum {}\big(^*\theta(\al_{1,1})*{}^*\theta(\al_{2,1})\big)(c)}{} =\sum {}\al_{1,1}\big(\theta(c_{1,1})\al_{2,1}(\theta(c_{2,1}))\big)
 =\sum {}\al(\theta(c_{1,1})\theta(c_{2,1}))=0.
\end{gather*}
Note that the f\/irst and the second equalities follow by the def\/initions of the product of $^{*\text{-gr}}C$ and of the transposed map, respectively. Taking into account the equivalence between the rela\-tions~\eqref{eq:sweedler} and~\eqref{eq:Delta*-3} we get the third equality. The ultimate equation holds as $\theta$ satisf\/ies the rela\-tion~\eqref{ec:qksz}.

The fact that $(A^{*\text{-gr}},C^{*\text{-gr}})$ is almost-Koszul can be proved in a similar way.
\end{proof}

We can take this result a step forward and prove that a Koszul pair corresponds by left duality to a Koszul pair.

\begin{Theorem} \label{te:kdual}
Let $(A,C)$ be a Koszul pair. If $A$ and $C$ are left locally finite, then $({^{*\text{\rm -gr}}}C,{^{*\text{\rm -gr}}}\!A)$ is Koszul. Similarly, if $A$ and $C$ are right locally finite, then $(C^{*\text{\rm -gr}},A^{*\text{\rm -gr}})$ is Koszul.
\end{Theorem}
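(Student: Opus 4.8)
The plan is to realise the left cochain Koszul complex $\K^\bullet_l\big({}^{*\text{-gr}}C,{}^{*\text{-gr}}\!A\big)$ of the dual pair as the left $R$-linear dual of the left chain Koszul complex $\K_\bullet^l(A,C)$ of $(A,C)$, and then to exploit that $R$-duality is exact because $R$ is semisimple. By Proposition~\ref{prop:a-ksz-dual} we already know that $({}^{*\text{-gr}}C,{}^{*\text{-gr}}\!A)$ is an almost-Koszul pair with structure isomorphism ${}^*\theta$, so by \cite[Theorem~2.3]{jps} together with the discussion closing Section~\ref{fa:Koszul pairs}, to prove it is Koszul it suffices to show that $\K^\bullet_l\big({}^{*\text{-gr}}C,{}^{*\text{-gr}}\!A,m\big)$ is exact for every $m>0$ (the $m=0$ summand being trivially exact, as in Section~\ref{fa:Koszul pairs}).

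Fix $m>0$. The chain complex $\K_\bullet^l(A,C,m)$ is bounded, with degree-$n$ term $\K_n^l(A,C,m)=A^{m-n}\ot_R C_n$, and every such term is finitely generated and projective as a left $R$-module since $R$ is semisimple and $A,C$ are left locally finite. Dualising termwise via the left dual $V\mapsto{}^*V$ and composing with the isomorphisms $\psi\colon{}^*(A^{m-n}\ot_R C_n)\xrightarrow{\ \sim\ }{}^*\!A^{m-n}\ot_{R^{\rm op}}{}^*C_n$ of Section~\ref{sec:gr-dual} (available because $C_n$ is left finitely generated), together with the identities ${}^*\!A^{k}=({}^{*\text{-gr}}\!A)_{k}$ and ${}^*C_{k}=({}^{*\text{-gr}}C)^{k}$, one identifies the degree-$n$ term of the dual of $\K_\bullet^l(A,C,m)$ with $({}^{*\text{-gr}}\!A)_{m-n}\ot({}^{*\text{-gr}}C)^{n}$, which is precisely the degree-$n$ term of $\K^\bullet_l\big({}^{*\text{-gr}}C,{}^{*\text{-gr}}\!A,m\big)$. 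Here the order of the tensor factors comes out right because $\K^\bullet_l$ places the comodule factor first, and under the duality $({}^{*\text{-gr}}\!A)$ plays the role of the dual coring.

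The substantive step is to check that, under these identifications, the transpose of the differential $d^l_n(a\ot c)=\sum a\,\theta_{C,A}(c_{1,1})\ot c_{2,n-1}$ coincides with the differential of $\K^\bullet_l$ for the dual pair. I would prove this by evaluating both maps on an arbitrary elementary tensor $a'\ot_R c'$ and rewriting, using the definition of the graded convolution product on ${}^{*\text{-gr}}C$, the definition of the transposed map ${}^*\theta$, and the equivalence between the Sweedler-type notation~\eqref{eq:sweedler} and the defining identity~\eqref{eq:Delta*-3} for the comultiplication of ${}^{*\text{-gr}}\!A$; the computation is entirely parallel to the chain of equalities in the proof of Proposition~\ref{prop:a-ksz-dual}. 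This is the step I expect to be the main obstacle: keeping the Sweedler indices, the transposes and the $\psi$-identifications consistently aligned, and making sure the order of the tensor factors and the left/right module conventions are matched to the correct member of the family of six complexes.

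Granting this isomorphism of complexes, the conclusion is immediate. Over a semisimple ring every short exact sequence of modules splits, so $V\mapsto{}^*V$ is exact; since $(A,C)$ is Koszul, $\K_\bullet^l(A,C,m)$ is exact for all $m>0$, hence so is its dual $\K^\bullet_l\big({}^{*\text{-gr}}C,{}^{*\text{-gr}}\!A,m\big)$, and therefore $({}^{*\text{-gr}}C,{}^{*\text{-gr}}\!A)$ is Koszul. The right-handed statement follows symmetrically, using the graded right duals and the complex $\K^r_\bullet$, or by passing to opposite rings and corings via $\K^r_\bullet(A,C)=\K^l_\bullet(A^{\rm op},C^{\rm op})$ and the canonical identification $A^{*\text{-gr}}={}^{*\text{-gr}}(A^{\rm op})$.
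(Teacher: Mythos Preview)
Your proposal is correct and follows essentially the same route as the paper: both dualize the bounded complex $\K_\bullet^l(A,C,m)$ term by term via the isomorphisms $\psi$ from Section~\ref{sec:gr-dual}, verify by an explicit Sweedler-type computation (the paper writes out precisely the chain of equalities you anticipate, using~\eqref{eq:Delta*-3} and the convolution product on ${}^{*\text{-gr}}C$) that the transposed differentials coincide with those of $\K^\bullet_l\big({}^{*\text{-gr}}C,{}^{*\text{-gr}}\!A,m\big)$, and conclude by exactness of $V\mapsto{}^*V$ over the semisimple ring~$R$. The only difference is that the paper carries out the differential computation in full rather than deferring it, and handles the right-handed case by the phrase ``in a similar way'' rather than via the opposite-ring trick you suggest.
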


\begin{proof}
Let us assume that $(A,C)$ is a Koszul pair, so the complex $\operatorname{K}_\bullet^l({A},{C},m)$ is exact for all $m>1$, see Section~\ref{fa:Koszul pairs}. We will show that $\operatorname{K}^\bullet_l(\dg {C},\dg{A},m)$ is also exact, for all $m>0$, as a~consequence of the fact that it is isomorphic to the graded left dual of $\operatorname{K}_\bullet^l(A,C,m)$. In turn, the claimed isomorphism will be proved by showing that the diagram
\begin{gather*}
\xymatrix{0\! \ar[r] & \!{^*}\!A^m \!\ot\! {^*}C_0\! \ar[d]^-\phi_\sim \ar[r]^-{\partial^0} & {^*}\!A^{m-1} \!\ot\! {^*}C_1 \ar[d]^-\phi_\sim \ar[r]^-{\partial^1} & {\cdots} \ar[r]^-{\partial^{m-2}} & {^*}\!A^1 \!\ot\! {^*}C_{m-1}\! \ar[d]^-\phi_\sim \ar[r]^-{\partial^{m-1}} & {^*}\!A^0 \!\ot\! {^*}C_m \ar[d]^-\phi_\sim\! \ar[r] & \!0 \\
0\! \ar[r] & \!{^*}(A^m \!\ot\! C_0)\! \ar[r]^-{\de^0}&{^*} (A^{m-1} \!\ot\! C_1)\ar[r]^-{\de^1} & {\cdots} \ar[r]^-{\de^{m-2}} & {^*}(A^1 \!\ot\! C_{m-1})\! \ar[r]^-{\de^{m-1}} & {^*}(A^0 \!\ot\! C_m)\! \ar[r] & \!0 }
\end{gather*}
has commutative squares, where $\pa^{n}$ and $\de^n$ denote the dif\/ferential map of $\operatorname{K}^\bullet_l(\dg {C},\dg{A},m)$ and the transposed map of the restriction of $d_{m-n+1}^l$ to $A^{n-1}\ot C_{m-n+1}$, respectively.

Indeed, if $\theta:=\theta_{C,A}$ and we pick $\al\in{}^*\!A^n$, $\be\in{}^*C_{m-n}$, $a\in A^{n-1}$ and $c\in C_{m-n+1}$, then we have $\pa^n(\al\ot\be)=\sum \al_{1,n-1}\ot {}^*\theta(\al_{2,1})\ast\be$. Using the def\/inition of $\phi$, the def\/inition of the left convolution product, the fact $\theta$ is right $R$-linear and the equivalence between the relations \eqref{eq:sweedler} and \eqref{eq:Delta*-3}, we get
\begin{gather*}
\big((\phi\circ\pa^n)(\al\ot\be)\big)(a\ot c) =\sum\al_{1,n-1}\big(a ({}^*\theta(\al_{2,1})\ast\be\big)(c))\\
\hphantom{\big((\phi\circ\pa^n)(\al\ot\be)\big)(a\ot c)}{} =\sum\al_{1,n-1}\big(a\al_{2,1}\big(\theta(c_{1,1}\be(c_{2,m-n})\big)\big)\\
\hphantom{\big((\phi\circ\pa^n)(\al\ot\be)\big)(a\ot c)}{}
=\sum\al_{1,n-1}\big(a\al_{2,1}\big(\theta(c_{1,1})\be(c_{2,m-n})\big)\big)\\
\hphantom{\big((\phi\circ\pa^n)(\al\ot\be)\big)(a\ot c)}{}
=\sum\al\big(a\theta(c_{1,1})\be(c_{2,m-n})\big).
\end{gather*}
Similarly, since $\de^n$ is the transposed map of the restriction of $d^l_{m-n+1}$ to $A^{n-1}\ot C_{m-n+1}$, using once again the def\/inition of $\phi$, we have
\begin{gather*}
 \big((\de^n\circ\phi)(\al\ot\be)\big)(a\ot c) =\phi(\al\ot\be)\big(d^l_{m-n+1}(a\ot c)\big)=\phi(\al\ot\be)\Big(\sum a\theta( c_{1,1})\ot c_{2,m-n}\Big)\\
\hphantom{\big((\de^n\circ\phi)(\al\ot\be)\big)(a\ot c)}{}
 =\sum\al\big(a\theta(c_{1,1})\be(c_{2,m-n})\big).
\end{gather*}
By comparing the results of the computations from the two sequences of equations we deduce that the squares of the above diagram are commutative, as we claimed.

In a similar way one proves the second statement of the corollary.
\end{proof}

\begin{Corollary} \label{cor:aast}
Let $A$ be a connected graded $R$-ring. If $A$ is a left $($right$)$ locally finite Koszul $R$-ring then its left $($right$)$ graded dual is a Koszul $R^{\rm op}$-coring.
\end{Corollary}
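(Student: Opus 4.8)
The plan is to deduce the corollary from Theorem~\ref{te:kdual} by feeding it the most convenient Koszul pair built on $A$. Suppose $A$ is a left locally finite Koszul $R$-ring (the right locally finite case is symmetric). By the equivalence $(1)\Leftrightarrow(3)$ of Theorem~\ref{t-k7}, the pair $(A,A^!)$ is Koszul, and this is the pair to which I would apply the left-duality theorem.

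Before doing so, the one thing that genuinely needs checking is that $A^!$ is itself left locally finite, so that the hypotheses of Theorem~\ref{te:kdual} are satisfied. This is short: since
\[
A^!_n=\bigcap_{i=1}^{n-1}\big(A^1\big)^{(i-1)}\ot\Ker\mu^{1,1}\ot\big(A^1\big)^{(n-i-1)}\subseteq\big(A^1\big)^{(n)},
\]
and $A^1$ is finitely generated as a left $R$-module by hypothesis, the semisimplicity of $R$ forces $\big(A^1\big)^{(n)}$ to be finitely generated as a left $R$-module, hence so is its submodule $A^!_n$. Thus $(A,A^!)$ is a Koszul pair whose two components are both left locally finite.

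Now Theorem~\ref{te:kdual} applies and gives that $\big(\dg{(A^!)},\dg{A}\big)$ is again a Koszul pair, this time over $R^{\rm op}$. It remains to extract the Koszulity of the \emph{second} component of a Koszul pair: as observed at the very start of the proof of Theorem~\ref{thm:kcoring}, for any Koszul pair $(B,D)$ the complex $\K^\bullet_r(B,D)$ is a resolution of $R$ by injective graded right $D$-comodules meeting the requirements of Definition~\ref{def:kcor}, so $D$ is a Koszul coring. Applied to $\big(\dg{(A^!)},\dg{A}\big)$, this yields that $\dg{A}$ is a Koszul $R^{\rm op}$-coring, which is the left-dual assertion; the right-dual assertion follows in the same way from the second half of Theorem~\ref{te:kdual}.

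I do not expect a real obstacle, since every ingredient is already available; the only point demanding care is matching the finiteness hypotheses of Theorem~\ref{te:kdual}, which require both $A$ \emph{and} the chosen coring to be left locally finite — this is precisely why the preliminary verification that $A^!$ is locally finite cannot be skipped. An alternative route would replace $(A,A^!)$ by $(A,T(A))$ and check local finiteness of $T(A)$ via the normalized bar complex $\Omega_\bullet(A)$ (whose terms are finite tensor powers of the $A^n$); this works equally well but is slightly less immediate than the submodule argument used above for $A^!$.
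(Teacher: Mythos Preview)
Your proof is correct and follows essentially the same route as the paper: show $A^!$ is left locally finite (you invoke semisimplicity, the paper phrases it as $R$ being Noetherian), apply Theorem~\ref{t-k7} to get that $(A,A^!)$ is Koszul, then apply Theorem~\ref{te:kdual} to obtain the Koszul pair $\big(\dg{(A^!)},\dg{A}\big)$, and read off that $\dg{A}$ is a Koszul coring. Your write-up is in fact slightly more explicit than the paper's about the last step (extracting Koszulity of the coring component from the pair), which is a good thing.
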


\begin{proof}
Note that $A^!_n$ is a submodule of the f\/initely generated left $R$-module $A^1\ot \cdots \ot A^1$, where the tensor product has $n$ factors. Since $R$ is a Noetherian ring it follows that $A^!$ is left locally f\/inite. By Theorem~\ref{t-k7}, the pair $(A,A^!)$ is Koszul. Using the preceding theorem we deduce that $(^{*\text{-gr}}(A^!), ^{*\text{-gr}}\!A)$ is Koszul. In particular,
$^{*\text{-gr}}\!A$ is Koszul.
\end{proof}

\begin{Corollary} \label{cor:cast}
Let $C$ be a connected graded $R$-coring. If $C$ is a left $($right$)$ locally finite Koszul $R$-coring then its left $($right$)$ graded dual is a Koszul $R^{\rm op}$-ring.
\end{Corollary}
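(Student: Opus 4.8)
The plan is to run the argument of Corollary~\ref{cor:aast} in the dual setting, replacing the Koszul pair $(A,A^!)$ by the pair $(C^!,C)$ and Theorem~\ref{t-k7} by Theorem~\ref{thm:kcoring}. Assume first that $C$ is left locally finite and Koszul; in particular $C$ is strongly graded, so the equivalence of statements $(1)$ and $(3)$ in Theorem~\ref{thm:kcoring} shows that the pair $(C^!,C)$ is Koszul.

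The next step is to verify that $C^!$ is left locally finite, which together with the hypothesis on $C$ will allow us to apply Theorem~\ref{te:kdual} to the pair $(C^!,C)$. By construction $C^!=\gen{C_1,\im\Delta_{1,1}}$ is a quotient of the tensor ring $T^a_R(C_1)$, so its homogeneous component $C^!_n$ is a quotient of the $n$-fold tensor power $C_1^{(n)}=C_1\ot\cdots\ot C_1$. Since $C$ is left locally finite, $C_1$ is finitely generated as a left $R$-module; hence so is $C_1^{(n)}$, and therefore so is its quotient $C^!_n$. Note that, in contrast to Corollary~\ref{cor:aast}, where $A^!_n$ appears as a \emph{submodule} of $(A^1)^{(n)}$ and the noetherianity of $R$ has to be invoked, here no such hypothesis is needed because one is dealing with quotients.

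Finally, applying Theorem~\ref{te:kdual} to the Koszul pair $(C^!,C)$ yields that $\big({}^{*\text{-gr}}C,\,{}^{*\text{-gr}}(C^!)\big)$ is a Koszul pair. Its first component ${}^{*\text{-gr}}C$ is a connected graded $R^{\rm op}$-ring (see Section~\ref{sec:gr-dual}), and a connected graded ring that forms a Koszul pair with some coring is itself Koszul (cf.\ the Remark following Proposition~\ref{suma_koszul}); hence ${}^{*\text{-gr}}C$ is a Koszul $R^{\rm op}$-ring. The right locally finite case is entirely analogous, invoking the second statement of Theorem~\ref{te:kdual} for the pair $(C^!,C)$ and using the graded right dual. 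There is no genuinely delicate point in this argument: apart from the elementary local-finiteness check for $C^!$, everything is a direct chaining of Theorems~\ref{thm:kcoring} and~\ref{te:kdual}, together with the fact that each component of a Koszul pair is Koszul.
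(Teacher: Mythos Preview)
Your proof is correct and follows exactly the route the paper intends: the paper's own proof is the single line ``One proceeds as in the proof of the preceding corollary,'' which unwinds precisely to the chain you wrote---use Theorem~\ref{thm:kcoring} to get the Koszul pair $(C^!,C)$, check that $C^!$ is left locally finite, and apply Theorem~\ref{te:kdual}. Your remark that in this direction Noetherianity of $R$ is unnecessary (because $C^!_n$ is a \emph{quotient} rather than a submodule of $C_1^{(n)}$) is a correct and worthwhile sharpening of the analogy with Corollary~\ref{cor:aast}.
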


\begin{proof}
 One proceeds as in the proof of the preceding corollary.
\end{proof}

\begin{Corollary}
Let $A$ be a Koszul $R$-ring and let $C$ be a Koszul $R$-coring.
\begin{enumerate}\itemsep=0pt
 \item[$1.$] If $A$ and $C$ are left locally finite, then $E({}^{*\text{\rm -gr}}\!A)\cong {}^{*\text{\rm -gr}}T(A)$ and $T({}^{*\text{\rm -gr}}C)\cong {}^{*\text{\rm -gr}}E(C)$.

\item[$2.$] If $A$ and $C$ are right locally finite, then $E(A^{*\text{\rm -gr}})\cong T(A)^{*\text{\rm -gr}}$ and $T(C^{*\text{\rm -gr}})\cong E(C)^{*\text{\rm -gr}}$.
\end{enumerate}
\end{Corollary}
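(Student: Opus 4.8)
The idea is to apply Theorem~\ref{te:kdual} to the two canonical Koszul pairs attached to $A$ and $C$ --- namely $(A,A^!)$ and $(C^!,C)$ --- and then to read off the four isomorphisms from the dual pairs so produced, using Theorems~\ref{t-k7} and~\ref{thm:kcoring} to identify $A^!$ with $T(A)$ and $C^!$ with $E(C)$. The pair $(A,A^!)$ is Koszul because $A$ is a Koszul $R$-ring (Theorem~\ref{t-k7}), and both $A$ and $A^!$ are left locally finite: for $A^!$ this is exactly the observation used in the proof of Corollary~\ref{cor:aast}, that $A^!_n$ is a sub-bimodule of the finitely generated left $R$-module $(A^1)^{(n)}$ and $R$ is Noetherian. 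Hence Theorem~\ref{te:kdual} shows that $\big(\dg{(A^!)},\dg A\big)$ is a Koszul pair. Dually, $(C^!,C)$ is Koszul (Theorem~\ref{thm:kcoring}) with $C$ and $C^!$ left locally finite (as in Corollary~\ref{cor:cast}), so $\big(\dg C,\dg{(C^!)}\big)$ is a Koszul pair as well.

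The second ingredient is a general fact about Koszul pairs, which I would isolate as a lemma: if $(B,D)$ is any Koszul pair, then $E(D)\cong B$ as graded $R$-rings and $T(B)\cong D$ as graded $R$-corings. Granting it, apply it to $\big(\dg{(A^!)},\dg A\big)$ to get $E(\dg A)\cong\dg{(A^!)}$; since $A$ is Koszul, Theorem~\ref{t-k7}(4) gives a coring isomorphism $A^!\cong T(A)$, and applying the functor $\dg{(-)}$ to it yields $\dg{(A^!)}\cong\dg{T(A)}$, so that $E(\dg A)\cong\dg{T(A)}$. Likewise, applying the lemma to $\big(\dg C,\dg{(C^!)}\big)$ gives $T(\dg C)\cong\dg{(C^!)}$, and Theorem~\ref{thm:kcoring}(4) together with functoriality of $\dg{(-)}$ turns $E(C)\cong C^!$ into $\dg{(C^!)}\cong\dg{E(C)}$, whence $T(\dg C)\cong\dg{E(C)}$. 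Item~$2$ follows by the identical argument with $\dg{(-)}$ replaced throughout by the right graded dual $(-)^{\ast\text{-gr}}$ and with the ``right'' halves of Theorem~\ref{te:kdual} and of the lemma.

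To prove the lemma, recall that for a Koszul pair $(B,D)$ the complex $\operatorname{K}^l_\bullet(B,D)$, with $\operatorname{K}^l_n(B,D)=B\ot D_n$, is a resolution of $R$ by projective graded left $B$-modules, and, dually, $\operatorname{K}^\bullet_r(B,D)$, with $\operatorname{K}^n_r(B,D)=B^n\ot D$, is a resolution of $R$ by injective graded right $D$-comodules. Applying $R\ot_B(-)$ to the first one, the term $R\ot_B(B\ot D_n)$ is identified with $D_n$, while the differential $a\ot c\mapsto\sum a\,\theta(c_{1,1})\ot c_{2,n-1}$ becomes the zero map, since $\theta(c_{1,1})\in B^1\subseteq B_+$ projects to $0$ in $R=B/B_+$; hence $T_n(B)=\operatorname{Tor}^B_n(R,R)\cong D_n$ for all $n$. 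Dually, applying $\Hom^D(R,-)$ to $\operatorname{K}^\bullet_r(B,D)$ uses the cofreeness of $B^n\ot D$ to identify $\Hom^D(R,B^n\ot D)$ with $B^n$, kills every differential by the same computation, and gives $E^n(D)=\operatorname{Ext}^n_D(R,R)\cong B^n$.

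The point that requires genuine care --- and the one I expect to be the main obstacle --- is checking that these graded $R$-module isomorphisms are in fact isomorphisms of $R$-corings, respectively of $R$-rings. This amounts to comparing the DG-coalgebra structure of the normalized bar complex $\Omega_\bullet(B)$ with the one induced on $\operatorname{K}^l_\bullet(B,D)$, and dually the DG-algebra structure of $\Omega^\bullet(D)$ with that of $\operatorname{K}^\bullet_r(B,D)$; the canonical morphisms of Section~\ref{fa:exemples_morphisms}, which are homotopy equivalences for a Koszul pair, provide the comparison. Once this structural bookkeeping is done the rest of the argument is purely formal. Alternatively, one may avoid the lemma and instead verify, by the linear algebra of dualizing over the semisimple ring $R$ --- so that the left $R$-dual is exact, sends an intersection of sub-bimodules to the sum of their annihilators, and is turned by the map $\psi$ of Section~\ref{sec:gr-dual} into a functor commuting with finite tensor products --- the formal identity $(\dg B)^!\cong\dg{(B^!)}$ valid for every left locally finite connected graded $R$-ring $B$, and dually for corings, and then combine it with Corollary~\ref{cor:aast} (resp.\ \ref{cor:cast}) and Theorem~\ref{thm:kcoring}(4) (resp.\ \ref{t-k7}(4)) in the same fashion.
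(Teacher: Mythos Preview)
Your proposal is correct and follows essentially the same route as the paper: form the canonical Koszul pair attached to $A$ (resp.\ $C$), check local finiteness of the shriek construction, apply Theorem~\ref{te:kdual} to dualize the pair, and then invoke the general fact that for any Koszul pair $(B,D)$ one has $T(B)\cong D$ and $E(D)\cong B$. The only difference is that the paper cites this last fact as \cite[Theorem~2.9]{jps} rather than reproving it; your sketch of the lemma (including the correct observation that the delicate point is upgrading the graded module isomorphisms to (co)ring isomorphisms via the comparison maps of Section~\ref{fa:exemples_morphisms}) is exactly how that theorem is established.
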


\begin{proof}
Let us assume that $A$ is left locally f\/inite. As $A$ is Koszul, then $(A,T(A))$ is a Koszul pair and $A^!\cong T(A)$, see Theorem \ref{t-k7}. By the proof of Corollary \ref{cor:aast}, the coring $A^!$ is left locally f\/inite. Hence, a fortiori, $T(A)$ is also left locally f\/inite and $(^{*\text{-gr}}T(A),{}^{*\text{-gr}}A)$ is Koszul. By \cite[Theorem~2.9]{jps}, for any Koszul pair $(B,D)$ the graded corings $D$ and $T(B)$ are isomorphic. In particular, for the Koszul pair $(^{*\text{-gr}}T(A),{}^{*\text{-gr}}A)$, we get $E({}^{*\text{-gr}}\!A)\cong {}^{*\text{-gr}}T(A)$.

The remaining three isomorphisms can be similarly proved.
\end{proof}

As an application of Corollaries~\ref{cor:aast} and~\ref{cor:cast} we are going to show that the incidence ring of a~f\/inite graded poset is Koszul if and only if the incidence coring of that poset is Koszul.

This class of examples was considered previously in~\cite{polo,wood}. They were also studied even in a~more general (nongraded) setting in~\cite{str}.

\section{Incidence (co)rings}\label{section6}
In this f\/inal section of the article, we will use the general results developed in the f\/irst three sections, restricting ourselves to the locally f\/inite context provided by the fourth section. The main direction of applications in this section is that of incidence algebras of f\/inite graded posets, which we endow with $R$-(co)ring structures, such that our theory of Koszul pairs and Koszul corings could be applied. Lastly, we add that these results could be seen as a framework in which particular cases of Koszul posets could be studied, as per \cite[Section~2]{mst}.

Since the theory is generally well established, we only state the particular hypotheses and notations which are used here. As such, we restrict our study to the case of \emph{finite graded posets}~$\kal{P}$, i.e., those in which every maximal chain included in an interval is of the same length (which we denote by $l([x,y])$ for the closed interval~$[x,y]$). Let~$\kal{I}_p$ denote the set of all intervals of length~$p$ in the poset.

Fixing a base f\/ield $\Bbbk$, denote by $\kap$ the \emph{incidence algebra of~$\kal{P}$}. As a vector space, it has a~basis of the form $\kal{B} = \{e_{x,y} \,|\, x \leq y \}$ and introducing a~multiplication
\begin{gather*}
	e_{x,y} \cdot e_{z,u} = \delta_{y,z}e_{x,u},
\end{gather*}
which is extended by linearity, we obtain the associative unital algebra structure. Note furthermore that $\Bbbk^a[\kal{P}]$ is an $R$-bimodule, where $R=\langle e_{x,x}\,|\, x\in\kal{P} \rangle \simeq \Bbbk^{|\kal{P}|}$ is regarded as a $\Bbbk$-algebra as above and the actions are def\/ined by the relation
\begin{gather*} e_{x,x}\cdot e_{y,z}\cdot e_{u,u}=\de_{x,y}\de_{z,u}e_{y,z}. \end{gather*}

Finally, $\kap$ becomes an $R$-ring, which will be referred to as \emph{the incidence $R$-ring of $\kal{P}$}.

By duality, to every f\/inite poset corresponds a coalgebra, namely its \emph{incidence coalgebra} $\Bbbk^c[\kal{P}]$, which as a linear space coincides with $\kap$. The comultiplication is given by the formula
\begin{gather*}
\Delta(e_{x,y}) = \sum_{z \in [x,y]} e_{x,z} \ot_\Bbbk e_{z,x}.\end{gather*}
The counit $\varepsilon$ is uniquely def\/ined such that $\varepsilon(e_{x,y})=\de_{x,y}$.

It is not dif\/f\/icult to see that $\xi\circ \Delta\colon \Bbbk^c[\kal{P}]\to \Bbbk^c[\kal{P}]\ot_R \Bbbk^c[\kal{P}]$ def\/ine an $R$-coring structure, where $\xi\colon \Bbbk^c[\kal{P}]\ot_\Bbbk \Bbbk^c[\kal{P}]\to \Bbbk^c[\kal{P}]\ot_R \Bbbk^c[\kal{P}]$ is the canonical map. The counit of this coring maps $e_{x,y}$ to $\de_{x,y}e_{x,x}$. The comultiplication and the counit of $\Bbbk^c[\kal{P}]$, regarded as a coring, will still be denoted by $\Delta$ and $\varepsilon$. Let us notice that
\begin{gather*}
\Delta(e_{x,y}) = \sum_{z \in [x,y]} e_{x,z} \ot e_{z,x},
\end{gather*}
where, as usual in this paper, $\ot=\ot_R$.

The relation between $\Bbbk^a[\kal{P}]$ and $\Bbbk^c[\kal{P}]$ is explained in the following result.

\begin{Theorem}\label{te:incidence-path}
We keep the notation from the preceding subsection. If $\kal{P}$ is a finite graded poset then $\Bbbk^a[\kal{P}]\cong {}^{*\text{\rm -gr}} \Bbbk^c[\kal{P}]$. In particular, the $R$-ring $\Bbbk^a[\kal{P}]$ is Koszul if and only if the $R$-coring $\Bbbk^c[\kal{P}] $ is Koszul.
\end{Theorem}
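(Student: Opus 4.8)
The plan is to exhibit an explicit isomorphism of connected graded $R$-rings $\kap\cong\dg{\kcp}$ and then to read off the Koszulity equivalence from the duality results of Section~\ref{section5}. First I would fix the gradings: on a finite graded poset both $\kap$ and $\kcp$ carry the grading by interval length, the homogeneous component of degree $n$ being spanned by $\{e_{x,y}\mid [x,y]\in\kal{I}_n\}$, i.e.\ by the $e_{x,y}$ with $l([x,y])=n$. The degree-zero part of each is then $R=\langle e_{x,x}\mid x\in\kal{P}\rangle$, so both are connected, and the comultiplication of $\kcp$ respects the grading because $l([x,z])+l([z,y])=l([x,y])$ for $z\in[x,y]$. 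Since $\kal{P}$ is finite, every homogeneous component is a finite-dimensional $\Bbbk$-space, hence finitely generated over $R$, so $\kap$ and $\kcp$ are locally finite; and $R\cong\Bbbk^{|\kal{P}|}$ is commutative, so $R^{\rm op}=R$ and there is no distinction between Koszul $R$- and $R^{\rm op}$-(co)rings here.

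Next I would write down the candidate map. Let $\{e_{x,y}^{\ast}\}$ be the basis of $\dg{\kcp}$ dual to $\{e_{x,y}\}$; being left $R$-linear, $e_{x,y}^{\ast}$ must carry $e_{a,b}$ into $e_{a,a}R$, hence is determined by $e_{x,y}^{\ast}(e_{a,b})=\de_{(a,b),(x,y)}\,e_{x,x}$. Define $\Phi\colon\kap\to\dg{\kcp}$ by $\Phi(e_{x,y})=e_{x,y}^{\ast}$; plainly $\Phi$ is a degree-preserving bijection respecting the $R$-bimodule structures, as on either side the basis is indexed by the same intervals in matching degrees. The one point requiring verification is multiplicativity. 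Using the description of the graded convolution product of $\dg{\kcp}$ together with the component $\Delta_{n,m}(e_{a,b})=\sum e_{a,p}\ot e_{p,b}$ --- the sum over $p\in[a,b]$ with $l([a,p])=n$ and $l([p,b])=m$ --- one computes
\begin{gather*}
 \big(e_{x,y}^{\ast}* e_{z,w}^{\ast}\big)(e_{a,b})=\sum_{p} e_{x,y}^{\ast}\big(e_{a,p}\, e_{z,w}^{\ast}(e_{p,b})\big)=\de_{y,z}\,\de_{(a,b),(x,w)}\,e_{x,x},
\end{gather*}
the degree conditions being automatic since $\kal{P}$ is graded; hence $e_{x,y}^{\ast}* e_{z,w}^{\ast}=\de_{y,z}\,e_{x,w}^{\ast}=\Phi(e_{x,y}e_{z,w})$, so $\Phi$ is an isomorphism of connected graded $R$-rings and $\kap\cong\dg{\kcp}$. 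The symmetric computation, now transposing the multiplication of $\kap$, gives the mirror isomorphism $\kcp\cong\dg{\kap}$.

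From here the equivalence is immediate. If $\kcp$ is Koszul, then, being a locally finite Koszul $R$-coring, it has by Corollary~\ref{cor:cast} a Koszul $R^{\rm op}$-ring as its graded left dual; but that dual is $\kap$, which is thus Koszul. Conversely, if $\kap$ is Koszul, then by Corollary~\ref{cor:aast} its graded left dual is a Koszul $R^{\rm op}$-coring, and that dual is $\kcp$. Since $R^{\rm op}=R$, this proves that $\kap$ is Koszul if and only if $\kcp$ is Koszul.

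The only genuine work lies in the middle paragraph --- verifying carefully that $\Phi$ intertwines the $R$-bimodule structures together with the (co)multiplications and (co)units, which involves keeping track of the component maps $\Delta_{n,m}$ in the Sweedler notation and of the idempotent decomposition of $R$ controlling the evaluation of dual functionals. I do not expect a conceptual obstacle: everything collapses to the single rule $e_{x,y}e_{z,u}=\de_{y,z}e_{x,u}$ and its transpose.
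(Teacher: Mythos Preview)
Your proposal is correct and follows essentially the same approach as the paper: both construct the explicit dual-basis isomorphism $e_{x,y}\mapsto e_{x,y}^{\ast}$ (the paper writes $f_{x,y}$), verify multiplicativity via the convolution product computation, and then invoke Corollaries~\ref{cor:aast} and~\ref{cor:cast}. The only cosmetic difference is that for the converse direction the paper obtains $\kcp\cong\kap^{*\text{-gr}}$ from the general double-dual isomorphism rather than from a second direct computation as you suggest, and it does not pause to note $R^{\rm op}=R$; neither point changes the argument.
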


\begin{proof}
Let $A:=\Bbbk^a[\kal{P}]$ and $C:=\Bbbk^c[\kal{P}]$. Therefore, we have to prove that there exists an isomorphism of graded $R$-rings $A\cong {^{*\text{-gr}}}C$. For two comparable elements $x\leq y$ we def\/ine the $\Bbbk$-linear map $f_{x,y}\colon C_p\to R$ by $f_{x,y}(e_{u,v})=\de_{x,u}\de_{y,v}e_{u,u}$, for all $u\leq v$ such that $l([u,v])=p$. An easy computation shows that $f_{x,y}$ is left $R$-linear. Moreover, if $f\in {^*}C_p$ and $[x,y]\in\kal{I}_p$, then there is a a certain element $\al_{x,y}$ in $\Bbbk$ such that $f(e_{x,y})=\al_{x,y}e_{x,x}$. It follows that $f=\sum\limits_{[x,y]\in\kal{I}_p}\al_{x,y}f_{x,y}$. Thus $f$ can be written in a unique way as linear combination of the elements of $\{f_{x,y}\,|\, {[x,y]\in\kal{I}_p}\}$, so this set is a linear basis of ${^*}C_p$.

For $x\leq y$, $z\leq t$ and $v\leq w$ one proves, by a straightforward computation, that
\begin{gather*}
 (f_{x,y}\ast f_{z,t})(e_{v,w})= \begin{cases}
 e_{v,v}, &\text{if} \ y=z,\; x=v \ \text{and} \ t=w,\\
 0,&\text{otherwise}.
 \end{cases}
\end{gather*}
On the other hand, by def\/inition, $f_{x,t}(e_{v,w})=\de_{x,v}\de_{t,w}e_{v,v}$. Thus $f_{x,y}\ast f_{z,t}=\de_{y,z}f_{x,t}$, for all~$x$, $y$, $z$, $v$ and~$w$ as above.

Summarizing, the $\Bbbk$-linear map $\chi_p\colon A^p \to {}^*C_p$, def\/ined by $\chi_p(e_{x,y})=f_{x,y}$, for all $x\leq y$ such that $l([x,y])=p$, is the component of degree $p$ of an isomorphism of graded $R$-rings.

By Corollary \ref{cor:cast}, if $C$ is a Koszul coring, then $A\cong {}^{*\text{-gr}}C$ is a~Koszul ring. Both $A$ and $C$ are locally f\/inite, being f\/inite dimensional linear spaces. Thus, $C\cong {}(^{*\text{-gr}}C){}^{*\text{-gr}}\cong A^{*\text{-gr}}$. Hence $C$ is Koszul, provided that $A$ is so, cf.\ Corollary~\ref{cor:aast}.
\end{proof}
\begin{Definition}
 We say that a graded poset $\kal{P}$ is \textit{Koszul} if its incidence ring $\Bbbk^a[\kal{P} ]$ is Koszul.
\end{Definition}

For a graded poset $\kal{P}$ let us denote the homogeneous component of degree $1$ of its incidence ring by $V$. For every interval $[x,y]$ of length $2$ we def\/ine the element
\begin{gather*}
\zeta_{x,y}:=\sum_{z\in (x,y)}e_{x,z}\ot_R e_{z,y}.
\end{gather*}
Let $I_\kal{P}$ denote the ideal generated in $T^a_R(V)$ by the set $\{\zeta_{x,y}\,|\, l([x,y])=2 \}$. With this notation in our hands, we have the following result.
\begin{Theorem}\label{te:shrieck}
If $A$ is the incidence ring of a Koszul poset $\kal{P}$, then the $R$-ring $T_R^a(V)/I_\kal{P}$ is Koszul.
\end{Theorem}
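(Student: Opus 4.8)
The plan is to recognize $T^a_R(V)/I_{\kal{P}}$ as the shriek ring $C^{!}$ of the incidence coring $C := \Bbbk^c[\kal{P}]$, and then to follow the chain of implications: $A := \Bbbk^a[\kal{P}]$ is Koszul $\Longrightarrow$ $C$ is Koszul $\Longrightarrow$ $(C^{!},C)$ is a Koszul pair $\Longrightarrow$ $C^{!}$ is Koszul. Throughout one uses that $V = A^1$ and $C_1$ coincide as $R$-bimodules, both having the set of intervals of length one as a basis.

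The first step is the computation of $\im\Delta_{1,1}$ for the incidence coring. The bimodule $C_2$ has basis $\{e_{x,y}\mid l([x,y])=2\}$, and for such an interval the gradedness of $\kal{P}$ forces every $z$ strictly between $x$ and $y$ to satisfy $l([x,z])=l([z,y])=1$; hence the $(1,1)$-component of $\Delta(e_{x,y})=\sum_{z\in[x,y]}e_{x,z}\ot e_{z,y}$ is exactly $\sum_{z\in(x,y)}e_{x,z}\ot e_{z,y}=\zeta_{x,y}$. Therefore $\im\Delta_{1,1}$ is the $R$-subbimodule of $V\ot V$ spanned by $\{\zeta_{x,y}\mid l([x,y])=2\}$, and since the two-sided ideal generated in $T^a_R(V)$ by a set equals the one generated by the subbimodule it spans, this ideal is precisely $I_{\kal{P}}$. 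This yields an isomorphism of graded $R$-rings $T^a_R(V)/I_{\kal{P}}\cong\gen{C_1,\im\Delta_{1,1}}=C^{!}$.

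Next I would verify that $C$ is a Koszul coring. By hypothesis $\kal{P}$ is a Koszul poset, i.e.\ $A$ is Koszul; moreover $A$ and $C$ are finite dimensional, hence locally finite, and $C$ is strongly graded — for $e_{x,y}$ with $l([x,y])=p+q$, the terms of $\Delta_{p,q}(e_{x,y})$ are supported, as $(x,y)$ varies, on pairwise disjoint subsets of the basis of $C_p\ot C_q$, so every $\Delta_{p,q}$ is injective. Theorem \ref{te:incidence-path} then gives that $C=\Bbbk^c[\kal{P}]$ is Koszul. Since $(C^{!},C)$ is always an almost-Koszul pair (Section \ref{sec:almost}), Theorem \ref{thm:kcoring} upgrades this to the statement that $(C^{!},C)$ is a Koszul pair; and since a connected graded $R$-ring that fits into a Koszul pair is itself Koszul, $C^{!}$ — and hence $T^a_R(V)/I_{\kal{P}}$ — is Koszul.

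I expect the only genuine work to be the bookkeeping of the first step: checking carefully that the $(1,1)$-component of the incidence comultiplication is the generator $\zeta_{x,y}$ (this is where gradedness enters, ensuring no lower-degree terms interfere), and that passing to the generated two-sided ideal turns $\im\Delta_{1,1}$ into $I_{\kal{P}}$, the identification being one of graded $R$-rings. Once $T^a_R(V)/I_{\kal{P}}\cong C^{!}$ is established there is no further obstacle, as the remaining steps are direct invocations of Theorems \ref{te:incidence-path} and \ref{thm:kcoring}; the only point requiring attention is to confirm that $C$ satisfies the standing hypotheses (connected, strongly graded) of Theorem \ref{thm:kcoring}, which holds for incidence corings of finite graded posets.
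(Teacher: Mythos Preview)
Your proposal is correct and follows essentially the same route as the paper: identify $T^a_R(V)/I_{\kal{P}}$ with $C^{!}$ via the computation $\Delta_{1,1}(e_{x,y})=\zeta_{x,y}$, invoke Theorem~\ref{te:incidence-path} to pass Koszulity from $A$ to $C$, and then use Theorem~\ref{thm:kcoring} to conclude that $C^{!}$ is Koszul. The paper's proof is terser, but the argument is the same; your additional verification that $C$ is strongly graded is harmless (and correct), though the paper treats this as implicit in the setup of incidence corings.
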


\begin{proof}
By Theorem \ref{te:incidence-path} the incidence coring $C:=\Bbbk^c[\kal{P} ]$ is Koszul. Hence, in view of Theorem~\ref{thm:kcoring}, the $R$-ring $C^!$ is Koszul as well. We conclude by remarking that $C^!=T_R^a(V)/I_\kal{P}$, as $\zeta_{x,y}=\Delta_{1,1}(e_{x,y} ) $, for any interval $[x,y] $ of length~$2$.
\end{proof}

As f\/inal closing remarks, let us add that we have explored some rich examples of Koszul posets in \cite[Section~2]{mst}, where we also provided a constructive algorithm to produce new examples starting from old ones. All of the examples included in the cited article exhibit also Koszul corings, by Theorem \ref{te:incidence-path} above.

\subsection*{Acknowledgements}

Both authors were f\/inancially supported by CNCS-UEFISCDI, project PCE PN-II-ID-PCE-2011-3-0635.

\pdfbookmark[1]{References}{ref}
\LastPageEnding


\begin{thebibliography}{99}
\footnotesize\itemsep=0pt

\bibitem{bgs}
Beilinson A., Ginzburg V., Soergel W., Koszul duality patterns in
 representation theory, \href{http://dx.doi.org/10.1090/S0894-0347-96-00192-0}{\textit{J.~Amer. Math. Soc.}} \textbf{9} (1996),
 473--527.

\bibitem{ber}
Berger R., Koszulity for nonquadratic algebras, \href{http://dx.doi.org/10.1006/jabr.2000.8703}{\textit{J.~Algebra}}
 \textbf{239} (2001), 705--734.

\bibitem{brw}
Brzezinski T., Wisbauer R., Corings and comodules, \href{http://dx.doi.org/10.1017/CBO9780511546495}{\textit{London Mathematical
 Society Lecture Note Series}}, Vol.~309, Cambridge University Press,
 Cambridge, 2003.

\bibitem{grs}
Green E.L., Reiten I., Solberg {\O}., Dualities on generalized {K}oszul
 algebras, \href{http://dx.doi.org/10.1090/memo/0754}{\textit{Mem. Amer. Math. Soc.}} \textbf{159} (2002), xvi+67~pages.

\bibitem{jps}
Jara~Mart\'{\i}nez P., L\'opez Pe\~na J., \c{S}tefan D., Koszul pairs and
 applications, \textit{J.~Noncommut. Geom.}, {t}o appear, \href{http://arxiv.org/abs/1011.4243}{arXiv:1011.4243}.

\bibitem{ke}
Keller B., Koszul duality and coderived categories (after {K}.~{L}ef\`{e}vre),
 available at \url{https://webusers.imj-prg.fr/~bernhard.keller/publ/kdcabs.html}.

\bibitem{md1}
Madsen D.O., On a common generalization of {K}oszul duality and tilting
 equivalence, \href{http://dx.doi.org/10.1016/j.aim.2011.05.003}{\textit{Adv. Math.}} \textbf{227} (2011), 2327--2348,
 \href{http://arxiv.org/abs/1007.3282}{arXiv:1007.3282}.

\bibitem{md2}
Madsen D.O., Quasi-hereditary algebras and generalized {K}oszul duality,
 \href{http://dx.doi.org/10.1016/j.jalgebra.2013.08.005}{\textit{J.~Algebra}} \textbf{395} (2013), 96--110, \href{http://arxiv.org/abs/1201.0441}{arXiv:1201.0441}.

\bibitem{mst}
Manea A., \c{S}tefan D., On Koszulity of f\/inite graded posets,
 \textit{J.~Algebra Appl.}, {t}o appear, \href{http://arxiv.org/abs/1605.05458}{arXiv:1605.05458}.

\bibitem{may}
May J.P., Bialgebras and {H}opf algebras, available at
 \url{http://www.math.uchicago.edu/~may/TQFT/HopfAll.pdf}.

\bibitem{mos}
Mazorchuk V., Ovsienko S., Stroppel C., Quadratic duals, {K}oszul dual
 functors, and applications, \href{http://dx.doi.org/10.1090/S0002-9947-08-04539-X}{\textit{Trans. Amer. Math. Soc.}} \textbf{361}
 (2009), 1129--1172, \href{http://arxiv.org/abs/math.RT/0603475}{math.RT/0603475}.

\bibitem{pi}
Piontkovski D., Graded algebras and their dif\/ferentially graded extensions,
 \href{http://dx.doi.org/10.1007/s10958-007-0137-y}{\textit{J.~Math. Sci.}} \textbf{142} (2007), 2267--2301.

\bibitem{PP}
Polishchuk A., Positselski L., Quadratic algebras, \href{http://dx.doi.org/10.1090/ulect/037}{\textit{University Lecture
 Series}}, Vol.~37, Amer. Math. Soc., Providence, RI, 2005.

\bibitem{polo}
Polo P., On {C}ohen--{M}acaulay posets, {K}oszul algebras and certain modules
 associated to {S}chubert varieties, \href{http://dx.doi.org/10.1112/blms/27.5.425}{\textit{Bull. London Math. Soc.}}
 \textbf{27} (1995), 425--434.

\bibitem{Pr}
Priddy S.B., Koszul resolutions, \href{http://dx.doi.org/10.1090/S0002-9947-1970-0265437-8}{\textit{Trans. Amer. Math. Soc.}} \textbf{152}
 (1970), 39--60.

\bibitem{str}
Reiner V., Stamate D.I., Koszul incidence algebras, af\/f\/ine semigroups, and
 {S}tanley--{R}eisner ideals, \href{http://dx.doi.org/10.1016/j.aim.2010.02.005}{\textit{Adv. Math.}} \textbf{224} (2010),
 2312--2345, \href{http://arxiv.org/abs/0904.1683}{arXiv:0904.1683}.

\bibitem{wood}
Woodcock D., Cohen--{M}acaulay complexes and {K}oszul rings, \href{http://dx.doi.org/10.1112/S0024610798005717}{\textit{J.~London
 Math. Soc.}} \textbf{57} (1998), 398--410.

\end{thebibliography}
\end{document}